\ificcvfinal\pagestyle{empty}\fi
\newtheorem{theorem}{Theorem}
\newtheorem{problem}{Problem}
\newtheorem{lemma}[theorem]{Lemma}
\newtheorem{proposition}[theorem]{Proposition}
\newtheorem{remark}[theorem]{Remark}
\newcommand{\cf}{\emph{cf.}\xspace}
\newcommand{\bdmath}{\begin{dmath}}
\newcommand{\edmath}{\end{dmath}}
\newcommand{\beq}{\begin{equation}}
\newcommand{\eeq}{\end{equation}}
\newcommand{\bdm}{\begin{displaymath}}
\newcommand{\edm}{\end{displaymath}}
\newcommand{\bea}{\begin{eqnarray}}
\newcommand{\eea}{\end{eqnarray}}
\newcommand{\beal}{\beq \begin{array}{ll}}
\newcommand{\eeal}{\end{array} \eeq}
\newcommand{\beas}{\begin{eqnarray*}}
\newcommand{\eeas}{\end{eqnarray*}}
\newcommand{\ba}{\begin{array}}
\newcommand{\ea}{\end{array}}
\newcommand{\bit}{\begin{itemize}}
\newcommand{\eit}{\end{itemize}}
\newcommand{\ben}{\begin{enumerate}}
\newcommand{\een}{\end{enumerate}}
\newcommand{\expl}[1]{&&\qquad\text{\color{gray}(#1)}\nonumber}
\newcommand{\calA}{{\cal A}}
\newcommand{\calB}{{\cal B}}
\newcommand{\calL}{{\cal L}}
\newcommand{\calN}{{\cal N}}
\newcommand{\calS}{{\cal S}}
\newcommand{\etal}{\emph{et~al.}\xspace}
\newcommand{\setal}{~\emph{et~al.}\xspace}
\newcommand{\M}[1]{{\bm #1}} 
\newcommand{\hide}[1]{}
\newcommand{\hiddenText}{{\color{gray} hidden text.}}
\newcommand{\hideWithText}[1]{\hiddenText}
\newcommand{\kron}{\otimes}
\newcommand{\subject}{\text{ subject to }}
\newcommand{\tran}{^{\mathsf{T}}}
\newcommand{\trace}[1]{\mathrm{tr}\left(#1\right)}
\newcommand{\rank}[1]{\mathrm{rank}\left(#1\right)}
\newcommand{\inv}{^{-1}}
\newcommand{\zero}{{\mathbf 0}}
\newcommand{\eye}{{\mathbf I}}
\newcommand{\Real}[1]{ { {\mathbb R}^{#1} } }
\newcommand{\SOthree}{\ensuremath{\mathrm{SO}(3)}\xspace}
\newcommand{\MA}{\M{A}}
\newcommand{\MD}{\M{D}}
\newcommand{\ME}{\M{E}}
\newcommand{\MJ}{\M{J}}
\newcommand{\MK}{\M{K}}
\newcommand{\MM}{\M{M}}
\newcommand{\MN}{\M{N}}
\newcommand{\MQ}{\M{Q}}
\newcommand{\MR}{\M{R}}
\newcommand{\MH}{\M{H}}
\newcommand{\MZ}{\M{Z}}
\newcommand{\MOmega}{\M{\Omega}}
\newcommand{\MLambda}{\M{\Lambda}}
\newcommand{\va}{\boldsymbol{a}} 
\newcommand{\vb}{\boldsymbol{b}}
\newcommand{\ve}{\boldsymbol{e}}
\newcommand{\vo}{\boldsymbol{o}}
\newcommand{\vq}{\boldsymbol{q}}
\newcommand{\vr}{\boldsymbol{r}}
\newcommand{\vu}{\boldsymbol{u}}
\newcommand{\vv}{\boldsymbol{v}}
\newcommand{\vxx}{\boldsymbol{x}} 
\newcommand{\vy}{\boldsymbol{y}}
\newcommand{\valpha}{\boldsymbol{\alpha}}
\newcommand{\vepsilon}{\boldsymbol{\epsilon}}
\newcommand{\scenario}[1]{{\smaller \sf#1}\xspace}
\newcommand{\sphere}{\scenario{sphere}}
\newcommand{\cvx}{{\sf cvx}\xspace}
\newcommand{\blue}[1]{{\color{blue}#1}}
\newcommand{\red}[1]{{\color{red}#1}}
\newcommand{\linkToPdf}[1]{\href{#1}{\blue{(pdf)}}}
\newcommand{\linkToPpt}[1]{\href{#1}{\blue{(ppt)}}}
\newcommand{\linkToCode}[1]{\href{#1}{\blue{(code)}}}
\newcommand{\linkToWeb}[1]{\href{#1}{\blue{(web)}}}
\newcommand{\linkToVideo}[1]{\href{#1}{\blue{(video)}}}
\newcommand{\award}[1]{\xspace} 
\newcommand{\maxOutliers}{{95\%}\xspace}
\newcommand{\myplus}{\red{{\bf+}}}
\newcommand{\betaNoise}{\sigma}
\renewcommand{\sphere}[1]{\calS^{#1}}
\newcommand{\TLS}{\scenario{TLS}}
\newcommand{\bnb}{BnB\xspace}
\newcommand{\TWlong}{Robust Wahba\xspace}
\newcommand{\MZero}{\M{0}}
\newcommand{\QCQP}{QCQP\xspace}
\newcommand{\barMQ}{\bar{\MQ}}
\newcommand{\sumAllPointsi}{\sum_{i=1}^{N}}
\newcommand{\barc}{\bar{c}}
\newcommand{\barcsq}{\barc^2}
\renewcommand{\eg}{\emph{e.g.},~}
\newcommand{\myParagraph}[1]{{\bf #1.}}
\newcommand{\supp}{Supplementary Material\xspace}
\newcommand{\name}{\scenario{QUASAR}}
\newcommand{\nameLong}{QUAternion-based Semidefinite relAxation for Robust alignment\xspace}
\newcommand{\FGR}{\scenario{FGR}}
\newcommand{\GORE}{\scenario{GORE}} 
\newcommand{\ransac}{\scenario{RANSAC}}
\newcommand{\SURF}{\scenario{SURF}} 
\newcommand{\mosek}{\scenario{MOSEK}} 
\newcommand{\wahba}{\scenario{Wahba}} 
\newcommand{\sdpnalplus}{\scenario{SDPNAL+}} 
\newcommand{\SIFT}{\scenario{SIFT}} 
\newcommand{\ORB}{\scenario{ORB}} 
\newcommand{\FPFH}{\scenario{FPFH}} 
\newcommand{\bunny}{\scenario{Bunny}}
\newcommand{\BnBLtwo}{\scenario{BnB-L2}}
\newcommand{\BnBAng}{\scenario{BnB-Ang}}
\newcommand{\bmat}{\left[ \begin{array}}
\newcommand{\emat}{\end{array} \right]}
\newcommand{\hatva}{\hat{\va}}
\newcommand{\hatvb}{\hat{\vb}}
\newcommand{\vxstar}{\vxx^\star}
\newcommand{\barM}{\bar{\MM}}
\newcommand{\barZ}{\bar{\MZ}}
\newcommand{\vSkew}[1]{[#1]_{\times}}
\newcommand{\barLambda}{\bar{\MLambda}}
\newcommand{\barvu}{\bar{\vu}}
\newcommand{\sym}{\text{Sym}}
\newcommand{\MTheta}{\boldsymbol{\Theta}}
\newcommand{\edit}[1]{#1}
\renewcommand{\expl}[1]{\text{\small\color{gray}(#1)}\nonumber\\}
\title{\vspace{-30mm} \small{\normalfont This paper has been accepted for publication at the International Conference on Computer Vision, 2019. \\ Please cite the paper as: H. Yang and L. Carlone, \\ ``A Quaternion-based Certifiably Optimal Solution to the Wahba Problem with Outliers'', \\ In \emph{International Conference on Computer Vision (ICCV)}, 2019.} \\ \vspace{10mm} \Large{A Quaternion-based Certifiably Optimal Solution \\ to the Wahba Problem with Outliers }} 
\author{Heng Yang and Luca Carlone \\
Laboratory for Information \& Decision Systems (LIDS) \\
Massachusetts Institute of Technology \\
{\tt\small \{hankyang, lcarlone\}@mit.edu }
}
\begin{document}

\maketitle



\begin{abstract}
   The \emph{Wahba problem}, also known as \emph{rotation search},  
   seeks to find the best rotation to align two sets of vector observations given putative correspondences, and is a fundamental routine in many computer vision and robotics applications.
    This work proposes the first polynomial-time \emph{certifiably optimal} approach for solving the Wahba problem when a large number of vector observations are outliers. 
   Our first contribution is to formulate the Wahba problem using a \emph{Truncated Least Squares} (\TLS) cost that is insensitive to a large fraction of spurious correspondences. 
   The second contribution is to rewrite the problem using unit quaternions and show that the \TLS cost can be framed 
as a Quadratically-Constrained Quadratic Program (\QCQP).
   %
   Since the resulting optimization is still highly non-convex and hard to solve globally, our third contribution is to develop a convex Semidefinite Programming (SDP) relaxation. We show that while a naive relaxation performs poorly in general, our relaxation is tight even in the presence of large noise and outliers. 
   %
   We validate the proposed algorithm, named \name (\nameLong), in both synthetic and real datasets showing that the algorithm outperforms \ransac, robust local optimization techniques, \edit{global outlier-removal procedures, and Branch-and-Bound methods}. \name is able to compute certifiably optimal solutions (\ie the relaxation is exact) even in the case when \maxOutliers of the correspondences are outliers.
\end{abstract}


\section{Introduction}
\label{sec:intro}
\begin{figure}[t]
	\begin{center}
	\begin{minipage}{\columnwidth}
			\centering%
			\includegraphics[width=0.95\columnwidth]{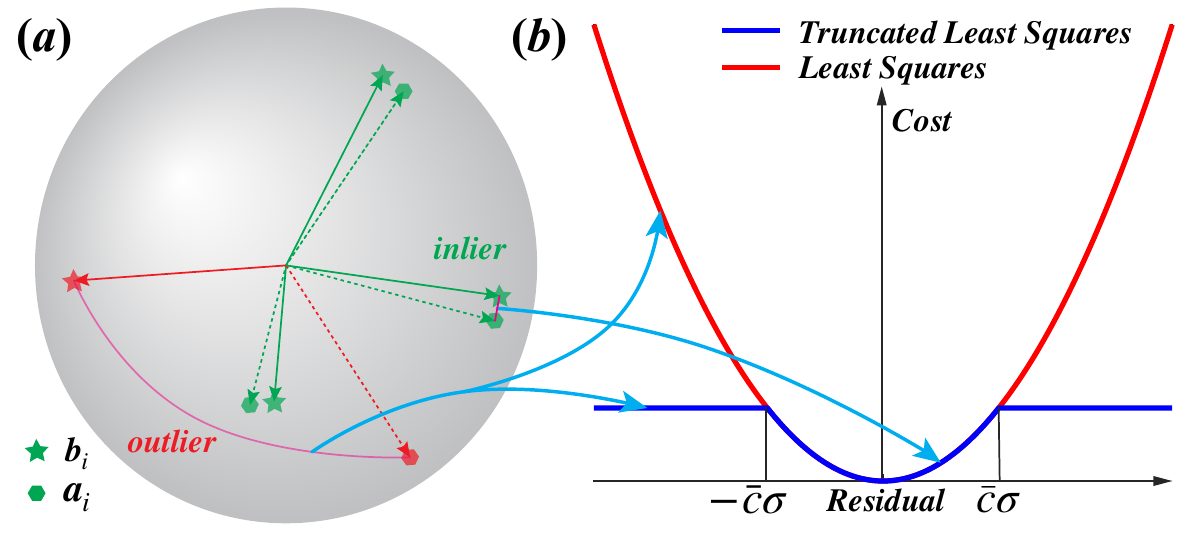}
	\end{minipage}
	\caption{We propose \name (\nameLong), a certifiably optimal solution to the Wahba problem 
	with outliers.
	(a) Wahba problem with four vector observations (three inliers and a single outlier). 
	(b) Contrary to standard least squares formulations, \name uses a truncated least squares cost that assigns a constant cost to 
	measurements with large residuals, hence
	being insensitive to outliers.
	 \label{fig:sphereandTLS}}
	\end{center}
	\vspace{-8mm}
\end{figure}

The \emph{Wahba problem}~\cite{wahba1965siam-wahbaProblem,Chin2018arxiv-starTrackEvent}, also known as \emph{rotation search}~\cite{Bustos2015iccv-gore3D,Hartley09ijcv-globalRotationRegistration,Bazin14eccv-robustRelRot}, is a fundamental problem in computer vision, 
robotics, and aerospace engineering and consists in finding the rotation between two coordinate frames given vector observations taken in the two frames.
%
The problem finds extensive applications in point cloud registration~\cite{Besl92pami,Yang16pami-goicp}, image stitching~\cite{Bazin14eccv-robustRelRot}, motion estimation and 3D reconstruction~\cite{Blais95pami-registration,Choi15cvpr-robustReconstruction,Makadia06cvpr-registration}, and satellite attitude determination~\cite{wahba1965siam-wahbaProblem,Chin2018arxiv-starTrackEvent,cheng2019aiaaScitech-totalLeastSquares}, to name a few.

Given two sets of vectors $\va_i, \vb_i \in \Real{3}$, $i=1,\ldots,N$,
the Wahba problem is formulated as a least squares problem 
%
\bea \label{eq:wahbaProblem}
\min_{\MR \in \SOthree} & \sumAllPointsi w_i^2 \Vert \vb_i - \MR \va_i \Vert^2 
\eea
which computes the best rotation $\MR$ that aligns vectors $\va_i$ and $\vb_i$, 
and where $\{w_i^2\}_{i=1}^N$ are (known) weights associated to each pair of measurements.
Here $\SOthree \doteq \{\MR \in \Real{3\times3}: \MR\tran \MR \!=\!\MR \MR\tran\!=\!\eye_3, \det(\MR)=1 \}$ is the 3D \emph{Special Orthogonal Group} containing proper 3D rotation matrices and $\eye_d$ denotes the identity matrix of size $d$. 
Problem~\eqref{eq:wahbaProblem} is known to be a  maximum likelihood estimator for the unknown rotation when the ground-truth correspondences $(\va_i, \vb_i)$ are known and the observations are corrupted with zero-mean isotropic Gaussian noise~\cite{Shuster89jas}. 
In other words,~\eqref{eq:wahbaProblem} computes an accurate estimate for $\MR$ when the 
observations can be written as $\vb_i = \MR \va_i  + \vepsilon_i$ ($i=1,\ldots,N$), where $\vepsilon_i$ is isotropic Gaussian noise. 
Moreover, problem~\eqref{eq:wahbaProblem} can be solved in closed form~\cite{markley2014book-fundamentalsAttitudeDetermine,Horn87josa,markley1988jas-svdAttitudeDeter,Arun87pami,Schonemann66psycho,horn1988josa-rotmatsol}.

 Unfortunately, in practical application, many of the vector observations may be \emph{outliers}, typically due 
 to incorrect vector-to-vector correspondences, \cf Fig.~\ref{fig:sphereandTLS}(a). 
 In computer vision, correspondences are established through 
 2D (\eg \SIFT~\cite{Lowe04ijcv}, \ORB~\cite{rublee2011iccv-orb}) or 3D (\eg \FPFH~{\cite{Rusu09icra-fast3Dkeypoints}) feature matching techniques, which are prone to produce many outlier correspondences. For instance, it is not uncommon to observe 
 95\% outliers when using \FPFH for point cloud registration~\cite{Bustos18pami-GORE}.
 In the presence of outliers, the standard Wahba problem~\eqref{eq:wahbaProblem} is no longer a maximum likelihood estimator and the resulting estimates are affected by large errors~\cite{Bustos18pami-GORE}. 
 A common approach to gain robustness against outliers is to incorporate solvers for problem~\eqref{eq:wahbaProblem} in a \ransac scheme~\cite{Fischler81}.
However, \ransac's runtime grows exponentially with the outlier ratio~\cite{Bustos18pami-GORE} and its performance, as we will see in Section \ref{sec:experiments}, quickly degrades in the presence of noise and high outlier ratios.  

This paper is motivated by the goal of designing an approach that (i) can solve the Wahba problem globally (without an initial guess), (ii) can tolerate large noise (\eg the magnitude of the noise is 10\% of the magnitude of the measurement vector) and extreme amount of outliers (\eg over 95\% of the observations are outliers), (iii) runs in polynomial time, and (iv) provides certifiably optimal solutions. The related literature, reviewed in Section~\ref{sec:relatedWork}, fails to simultaneously meet these goals, and only includes algorithms that are robust to moderate amounts of outlier, or 
are robust to extreme amounts of outliers (\eg 90\%) but only provide sub-optimal solutions (\eg \GORE~\cite{Bustos2015iccv-gore3D}), or that are globally optimal but run in exponential time in the worst case, such as \emph{branch-and-bound} (\bnb) methods (\eg \cite{Hartley09ijcv-globalRotationRegistration,bazin2012accv-globalRotSearch}).

\myParagraph{Contribution} Our first contribution, presented in Section~\ref{sec:TLSFormulation}, is to reformulate the Wahba problem~\eqref{eq:wahbaProblem} using a \emph{Truncated Least Squares} (\TLS) cost that is robust against a large fraction of outliers. We name the resulting optimization problem the (outlier-)\emph{\TWlong} problem. 

The second contribution (Section~\ref{sec:binaryClone}) is to depart from the rotation matrix representation and rewrite the \edit{\TWlong} problem using unit quaternions.
 In addition, we show how to rewrite the 
 \TLS cost function by using
  additional binary variables that decide whether a measurement is an inlier or outlier. 
  Finally, we prove that the mixed-integer program (including a quaternion and $N$ binary variables) can be rewritten as an optimization involving $N+1$ unit quaternions. This sequence of re-parametrizations, that we call \emph{binary cloning}, leads to a non-convex Quadratically-Constrained Quadratic Program (\QCQP).

The third contribution (Section \ref{sec:SDPRelaxation}) is to provide a polynomial-time certifiably optimal solver for the \QCQP.
Since the \QCQP is highly non-convex and hard to solve globally,  we propose a  
\emph{Semidefinite Programming (SDP) relaxation} that is empirically tight.
We show that while a naive SDP relaxation is not tight and performs poorly in practice, the proposed method remains tight even when observing \maxOutliers outliers.
Our approach is \emph{certifiably optimal}~\cite{Bandeira16crm} in the sense that it provides a way to check optimality 
of the resulting solution, and computes optimal solutions in practical problems.
To the best of our knowledge, this is the first polynomial-time method that solves the robust Wahba problem with certifiable optimality guarantees. 

We validate the proposed algorithm, named \name (\nameLong), in both synthetic and real datasets 
for point cloud registration and image stitching, 
showing that the algorithm outperforms \ransac, robust local optimization techniques, \edit{ outlier-removal procedures, and \bnb methods}. 



\section{Related Work}
\label{sec:relatedWork}
\subsection{Wahba problem without outliers}
The Wahba problem was first proposed by Grace Wahba in 1965 with the goal of estimating satellite attitude given vector observations~\cite{wahba1965siam-wahbaProblem}. 
In aerospace, the vector observations are typically the directions to visible stars observed by sensors onboard the satellite.
The Wahba problem~\eqref{eq:wahbaProblem} is related to the well-known 
\emph{Orthogonal Procrustes} problem~\cite{Gower05oss-procrustes} where one searches for 
orthogonal matrices (rather than rotations) and all the weights $w_i$ are set to be equal to one.
Schonemann~\cite{Schonemann66psycho} provides a closed-form solution for the Orthogonal Procrustes problem using
 singular value decomposition. 
 Subsequent research effort across multiple communities led to the derivation of 
 closed-form solutions for the Wahba problem~\eqref{eq:wahbaProblem} using both quaternion~\cite{Horn87josa,markley2014book-fundamentalsAttitudeDetermine} and rotation matrix~\cite{horn1988josa-rotmatsol,markley1988jas-svdAttitudeDeter,Arun87pami,Forbes15jgcd-wahba,Saunderson15siam,Khoshelham16jprs} representations. 

The computer vision community has investigated the Wahba problem in the context of  
 point cloud registration~\cite{Besl92pami,Yang16pami-goicp}, image stitching~\cite{Bazin14eccv-robustRelRot}, motion estimation and 3D reconstruction~\cite{Blais95pami-registration,Choi15cvpr-robustReconstruction}. 
 In particular, the closed-form solutions from  
 Horn~\cite{Horn87josa} and Arun\setal~\cite{Arun87pami} 
 are now commonly used in point cloud registration techniques~\cite{Besl92pami}. 
 While the formulation~\eqref{eq:wahbaProblem} implicitly assumes zero-mean isotropic Gaussian noise, 
 several authors investigate a generalized version of the Wahba problem~\eqref{eq:wahbaProblem}, where the noise follows an anisotropic Gaussian~\cite{Briales17cvpr-registration,cheng2019aiaaScitech-totalLeastSquares,Crassidis07-survey}. 
 Cheng and Crassidis~\cite{cheng2019aiaaScitech-totalLeastSquares} develop a local iterative optimization algorithm. Briales and Gonzalez-Jimenez~\cite{Briales17cvpr-registration} propose a convex relaxation to compute global solutions for the anisotropic case. 
 Ahmed\setal~\cite{Ahmed12tsp-wahba} develop an SDP relaxation for the case with bounded noise and no outliers.
 All these methods 
 are known to perform poorly in the presence of outliers.

\subsection{Wahba problem with outliers} 

In computer vision applications, the vector-to-vector correspondences are typically established through descriptor matching, which may lead to spurious correspondences and outliers~\cite{Bustos2015iccv-gore3D,Bustos18pami-GORE}. This observation triggered research into outlier-robust variants of the Wahba problem.

\myParagraph{Local Methods} The most widely used method for handling outliers is \ransac, which is efficient and accurate in the low-noise and low-outlier regime~\cite{Fischler81,Meer91ijcv-robustVision}. However, \ransac is non-deterministic (different runs give different solutions), sub-optimal (the solution may not capture all the inlier measurements), and its performance quickly deteriorates in the large-noise and high-outlier regime. Other approaches resort to \emph{M-estimators}, which replace the least squares cost in eq.~\eqref{eq:wahbaProblem} with robust cost functions that are less sensitive to outliers~\cite{Black96ijcv-unification,MacTavish15crv-robustEstimation}. Zhou \etal~\cite{Zhou16eccv-fastGlobalRegistration} propose \emph{Fast Global Registration} (\FGR) that employs the Geman-McClure robust cost function and solves the resulting optimization iteratively using a continuation method. 
Since the problem becomes more and more non-convex at each iteration, \FGR does not guarantee global optimality in general. In fact, as we show in Section \ref{sec:experiments}, \FGR tends to fail when the outlier ratio is above 70\% of the observations. 

\myParagraph{Global Methods} The most popular class of global methods for robust rotation search is based on \emph{Consensus Maximization}~\cite{chin2017slcv-maximumconsensusadvances} and \emph{branch-and-bound} (\bnb)~\cite{campbell2017iccv-2D3Dposeestimation}. Hartley and Kahl~\cite{Hartley09ijcv-globalRotationRegistration} first proposed using \bnb for rotation search, and Bazin \etal~\cite{bazin2012accv-globalRotSearch} adopted consensus maximization to extend their \bnb algorithm with a robust formulation. \bnb is guaranteed to return the globally optimal solution, but it runs in exponential time in the worst case. 
Another class of global methods for consensus maximization enumerates all possible subsets of measurements with size no larger than the problem dimension (3 for rotation search) to analytically compute candidate solutions, and then verify global optimality using computational geometry~\cite{olsson2008cvpr-polyRegOutlier,enqvist2012eccv-robustfitting}. These methods 
still require exhaustive enumeration.  

\myParagraph{Outlier-removal Methods} Recently, Para and Chin~\cite{Bustos2015iccv-gore3D} proposed a \emph{guaranteed outlier removal} (\GORE) algorithm that removes gross outliers while ensuring that all inliers are preserved. 
Using \GORE as a preprocessing step for \bnb is shown to boost the speed of \bnb, while using \GORE alone still provides a reasonable sub-optimal solution. Besides rotation search, \GORE has also been successfully applied to other geometric vision problems such as triangulation~\cite{Chin16cvpr-outlierRejection}, and registration~\cite{Bustos2015iccv-gore3D,Bustos18pami-GORE}. Nevertheless, 
 the existing literature is still missing a polynomial-time algorithm that can
simultaneously tolerate extreme amounts of outliers and return globally optimal solutions.

\section{Problem Formulation: Robust Wahba}
\label{sec:TLSFormulation}

Let $\calA=\{\va_i\}_{i=1}^N$ and $\calB=\{\vb_i\}_{i=1}^N$ be two sets of 3D vectors ($\va_i,\vb_i \in \Real{3}$), such that, given 
$\calA$, the vectors in $\calB$ are described by the following generative model: 
%
\bea \label{eq:generativeModel}
\begin{cases}
\vb_i=\MR \va_i + \vepsilon_i & \text{if }\vb_i \text{ is an inlier, or} \\
\vb_i = \vo_i & \text{if }\vb_i \text{ is an outlier}
\end{cases}
\eea
where $\MR \in \SOthree$ is an (unknown, to-be-estimated) rotation matrix, 
 $\vepsilon_i \in \Real{3}$ models the inlier measurement noise, and $\vo_i \in \Real{3}$ is an arbitrary vector. In other words, if $\vb_i$ is an inlier, then it must be a rotated version of $\va_i$ plus noise, while if $\vb_i$ is an outlier, then $\vb_i$ is  arbitrary. 
 In the special case where all $\vb_i$'s are inliers and the noise obeys a zero-mean isotropic Gaussian distribution, \ie $\vepsilon_i \sim \calN(\zero_3, \sigma_i^2 \eye_3)$, then the Maximum Likelihood estimator of $\MR$ takes the form of eq.~\eqref{eq:wahbaProblem}, with the weights chosen as the inverse of the measurement variances, $w_i^2=1/\sigma_i^2$. In this paper, 
 we are interested in the case where measurements include outliers.

\subsection{The Robust Wahba Problem} 

We introduce a novel formulation for the (outlier-)\emph{\TWlong} problem, 
that uses a \emph{truncated least squares} (\TLS) cost function: 
%
\bea \label{eq:TLSWahbaRot} 
\min_{\MR \in \SOthree}  \sumAllPointsi \min \left( \frac{1}{\betaNoise_i^2} \| \vb_i - \MR \va_i \|^2 \; , \; \barcsq \right)
\eea
where the inner ``$\min(\cdot,\cdot)$'' returns the minimum between two scalars.\edit{~The \TLS cost has been recently shown to be robust against high outlier rates in pose graph optimization~\cite{Lajoie2019RAL-DCGM}}.~Problem~\eqref{eq:TLSWahbaRot} computes a least squares solution for measurements with small residuals, \ie when $\frac{1}{\betaNoise_i^2} \| \vb_i - \MR \va_i \|^2 \leq \barcsq$ (or, equivalently, $\| \vb_i - \MR \va_i \|^2 \leq \betaNoise_i^2 \barcsq$), while discarding measurements with large residuals (when $\frac{1}{\betaNoise_i^2} \| \vb_i - \MR \va_i \|^2 > \barcsq$ the $i$-th term becomes a constant $\barcsq$ and has no effect on the optimization). 
Problem~\eqref{eq:TLSWahbaRot} implements the \TLS cost function illustrated in Fig.~\ref{fig:sphereandTLS}(b). 
The \edit{parameters} $\betaNoise_i$ and $\barcsq$ are fairly easy to set in practice, as discussed in the following remarks.

\begin{remark}[Probabilistic choice of $\betaNoise_i$ and $\barcsq$]\label{rmk:chi2}
Let us assume that the inliers follow the generative model~\eqref{eq:generativeModel} with  
$\vepsilon_i \sim \calN(\MZero_3, \sigma_i^2\eye_3)$; we will not make assumptions on the generative model for the outliers, which is unknown in practice. Since the noise on the inliers is Gaussian, it holds:
\bea
\frac{1}{\betaNoise_i^2} \| \vb_i - \MR \va_i \|^2 = \frac{1}{\betaNoise_i^2} \| \vepsilon_i \|^2 
\sim \chi^2(3)
\eea
where $\chi^2(3)$ is the Chi-squared distribution with three degrees of freedom. 
Therefore, with desired probability $p$, the weighted error $\frac{1}{\betaNoise_i^2} \| \vepsilon_i \|^2$ for the inliers satisfies:
\bea \label{eq:choiceofp}
\mathbb{P} \left( \frac{\Vert \vepsilon_i \Vert^2}{\sigma_i^2} \leq \barcsq \right) = p,
\eea
where $\barcsq$ is the quantile of the $\chi^2$ distribution with three degrees of freedom and lower tail probability equal to $p$. 
Therefore, one can simply set the $\betaNoise_i$ in Problem~\eqref{eq:TLSWahbaRot} to be the standard deviation of the inlier noise, and compute $\barcsq$ from the $\chi^2(3)$ distribution for a desired probability $p$ (\eg $p=0.99$).
The constant $\barcsq$ monotonically increases with $p$; therefore, setting $p$ close to 1 makes the formulation~\eqref{eq:TLSWahbaRot} more prone to accept measurements with large residuals, while a small $p$ makes~\eqref{eq:TLSWahbaRot} more selective. 
\end{remark}

\begin{remark}[Set membership choice of $\betaNoise_i$ and $\barcsq$] 
Let us assume that the inliers follow the generative model~\eqref{eq:generativeModel} with  
$\| \vepsilon_i \| \leq \beta_i$, where $\beta_i$ is a given noise bound; this is the typical setup assumed in 
\emph{set membership estimation}~\cite{Milanese89chapter-ubb}. 
In this case, it is easy to see that the inliers satisfy: 
\bea
\| \vepsilon_i \| \leq \beta_i \iff \| \vb_i - \MR \va_i \|^2 \leq \beta_i^2
\eea
hence one can simply choose $\betaNoise_i^2 \barcsq = \beta_i^2$, 
Intuitively, the constant $\betaNoise_i^2 \barcsq$ is the largest (squared) residual error we are willing to tolerate on the $i$-th measurement. 
\end{remark}

\subsection{Quaternion formulation}

We now adopt a quaternion formulation for~\eqref{eq:TLSWahbaRot}. 
 Quaternions are an alternative representation for 3D rotations~\cite{Shuster93jas-attitude,Breckenridge99tr-quaternions} and their use will simplify the derivation of our 
 convex relaxation in Section~\ref{sec:SDPRelaxation}. We start by reviewing basic facts about quaternions and then state the quaternion-based \TWlong formulation in Problem~\ref{prob:qTW} below.

{\bf Preliminaries on Unit Quaternions.}
We denote a unit quaternion as a unit-norm column vector $\vq=[\vv\tran\ s]\tran$, where $\vv \in \Real{3}$ is the \emph{vector part} of the quaternion and the last element $s$ is the \emph{scalar part}. We  also use $\vq = [q_1\ q_2\ q_3\ q_4]\tran$ to denote the four entries of the quaternion. 

Each quaternion represents a 3D rotation and the composition of two rotations $\vq_a$ and $\vq_b$ can be 
computed using the \emph{quaternion product} $\vq_c = \vq_a \kron \vq_b$:
\bea \label{eq:quatProduct}
\vq_c = \vq_a \kron \vq_b = \MOmega_1(\vq_a) \vq_b = \MOmega_2(\vq_b) \vq_a,
\eea
where $\MOmega_1(\vq)$ and $\MOmega_2(\vq)$ are defined as follows:
\vspace{-3mm}
\begin{equation} \label{eq:Omega_1}
\arraycolsep=2pt\def\arraystretch{0.5}
\scriptscriptstyle \MOmega_1(\vq) \scriptscriptstyle = 
\scriptscriptstyle \left[ \scriptscriptstyle \begin{array}{cccc}
\scriptscriptstyle q_4 & \scriptscriptstyle -q_3 &  \scriptscriptstyle q_2 &  \scriptscriptstyle q_1 \\
\scriptscriptstyle q_3 & \scriptscriptstyle q_4 & \scriptscriptstyle -q_1 & \scriptscriptstyle q_2 \\
\scriptscriptstyle -q_2 & \scriptscriptstyle q_1 & \scriptscriptstyle q_4 & \scriptscriptstyle q_3 \\
\scriptscriptstyle -q_1 & \scriptscriptstyle -q_2 & \scriptscriptstyle -q_3 & \scriptscriptstyle q_4 
\scriptscriptstyle \end{array} \scriptscriptstyle \right],\quad 
\arraycolsep=2pt\def\arraystretch{0.5}
\scriptscriptstyle \MOmega_2(\vq) = \scriptscriptstyle
\bmat{cccc}
\scriptscriptstyle q_4 & \scriptscriptstyle q_3 & \scriptscriptstyle -q_2 & \scriptscriptstyle q_1 \\
\scriptscriptstyle -q_3 & \scriptscriptstyle q_4 & \scriptscriptstyle q_1 & \scriptscriptstyle q_2 \\
\scriptscriptstyle q_2 & \scriptscriptstyle -q_1 & \scriptscriptstyle q_4 & \scriptscriptstyle q_3 \\
\scriptscriptstyle -q_1 & \scriptscriptstyle -q_2 & \scriptscriptstyle -q_3 & \scriptscriptstyle q_4 
\emat.
\end{equation}
\vspace{-3mm}
The inverse of a quaternion $\vq=[\vv\tran\ s]\tran$ is defined as:
\bea
\label{eq:q_inverse}
\vq\inv = \bmat{c}
-\vv \\
s
\emat,
\eea
where one simply reverses the sign of the vector part. 

The rotation of a vector $\va \in \Real{3}$ can be expressed in terms of quaternion product. 
Formally, if $\MR$ is the (unique)  rotation matrix corresponding to a unit quaternion $\vq$, then:
\bea
\label{eq:q_pointRot}
\bmat{c}
\MR \va \\
0 
\emat = \vq \kron \hat{\va} \kron \vq\inv,
\eea
where $\hat{\va}=[\va\tran\ 0]\tran$ is the homogenization of $\va$, obtained by augmenting $\va$ with an extra entry equal to zero.

The set of unit quaternions, denoted as $\sphere{3}=\{\vq\in\Real{4}: \Vert \vq \Vert = 1\}$, is the \emph{3-Sphere} manifold.  $\sphere{3}$ is a \emph{double cover} of $\SOthree$ since $\vq$ and $-\vq$ represent the same rotation (intuitively, eq.~\eqref{eq:q_pointRot} implements the same rotation if we replace $\vq$ with $-\vq$, since the matrices in~\eqref{eq:Omega_1} are linear in $\vq$).

{\bf Quaternion-based Robust Wahba Problem.}
Equipped with the relations reviewed above, it is now easy to rewrite~\eqref{eq:TLSWahbaRot}  
using unit quaternions. 

\begin{problem}[Quaternion-based Robust Wahba]\label{prob:qTW}
 The robust Wahba problem~\eqref{eq:TLSWahbaRot} can be equivalently written as: 
\bea \label{eq:TLSWahba} 
\min_{\vq \in \sphere{3}}  \sumAllPointsi \min \left( \frac{1}{\betaNoise_i^2} \Vert \hatvb_i - \vq \otimes \hatva_i \otimes \vq\inv \Vert^2 \;,\; \barcsq \right),
\eea
where we defined 
$\hatva_i \doteq [\va_i\tran \; 0]\tran$ and $\hatvb_i \doteq [\vb_i\tran \; 0]\tran$, and 
$\otimes$ denotes the quaternion product.  
\end{problem}

The equivalence between~\eqref{eq:TLSWahba} and~\eqref{eq:TLSWahbaRot} can be easily understood from eq.~\eqref{eq:q_pointRot}.
The main advantage of using~\eqref{eq:TLSWahba} is that we replaced the set \SOthree with a simpler set, the set of unit-norm vectors $\sphere{3}$. 

\vspace{-2mm}
\section{Binary Cloning and \QCQP}
\label{sec:binaryClone}
\vspace{-2mm}

The goal of this section is to rewrite~\eqref{eq:TLSWahba} as a Quadratically-Constrained Quadratic Program (\QCQP). We do so in three steps: (i) we show that~\eqref{eq:TLSWahba} can be written using binary variables, (ii) we show that the problem with binary variables can be written using $N+1$ quaternions, and (iii) we manipulate the resulting problem to expose its quadratic nature. The derivation of the \QCQP will pave the way to our convex relaxation (Section~\ref{sec:SDPRelaxation}).

{\bf From Truncated Least Squares to Mixed-Integer Programming.}
Problem~\eqref{eq:TLSWahba} is hard to solve globally, due to the non-convexity of both the cost function and  the constraint ($\sphere{3}$ is a non-convex set). As a first re-parametrization, we expose the non-convexity of the cost by 
rewriting the \TLS cost using binary variables. 
Towards this goal, we rewrite the inner ``$\min$'' in~\eqref{eq:TLSWahba} using the following property, that holds for any pair of  scalars $x$ and $y$: 
\bea
\label{eq:minxy}
\min(x,y)=\min_{\theta \in \{+1,-1\}} \frac{1+\theta}{2}x + \frac{1-\theta}{2}y.
\eea
Eq.~\eqref{eq:minxy} can be verified to be true by inspection: the right-hand-side returns $x$ (with minimizer $\theta=+1$) 
if $x<y$, and $y$ (with minimizer $\theta=-1$) if $x>y$. This enables us to rewrite problem \eqref{eq:TLSWahba} as a mixed-integer program including the quaternion
$\vq$ and binary variables $\theta_i, \; i=1,\ldots,N$:
\bea \label{eq:TLSadditiveForm}
\min_{\substack{ \vq \in S^3 \\ \theta_i = \{\pm1\}} } \sumAllPointsi \frac{1+\theta_i}{2}\frac{\Vert \hatvb_i - \vq \kron \hatva_i \kron \vq\inv \Vert^2}{\betaNoise_i^2}  + \frac{1-\theta_i}{2}\barcsq.
\eea
The reformulation is related to the Black-Rangarajan duality 
between robust estimation and line processes~\cite{Black96ijcv-unification}: the \TLS cost is an extreme case of robust function that results in a binary line process.
Intuitively, the binary variables $\{\theta_i\}_{i=1}^N$ in problem~\eqref{eq:TLSadditiveForm} decide whether a given measurement $i$ is an inlier 
($\theta_i=+1$) or an outlier ($\theta_i=-1$).

{\bf From Mixed-Integer to Quaternions.}
Now we convert the mixed-integer program~\eqref{eq:TLSadditiveForm} to an optimization over $N+1$ quaternions. The intuition is that, if we define extra quaternions $\vq_i \doteq \theta_i \vq$, 
we can rewrite~\eqref{eq:TLSadditiveForm} as a function of $\vq$ and $\vq_i$ ($i=1,\ldots,N$). This is a key step towards getting a quadratic cost (Proposition~\ref{prop:qcqp}). 
The re-parametrization is formalized in the following proposition.
\vspace{-2mm}
\begin{proposition}[Binary cloning] \label{prop:binaryCloning}
The mixed-integer program~\eqref{eq:TLSadditiveForm} is equivalent (in the sense that they admit the same 
optimal solution $\vq$) to the following optimization
%
\bea
\label{eq:TLSadditiveForm2}
\min_{\substack{ \vq \in S^3 \\ \vq_i = \{\pm\vq\}} } & \hspace{-4mm}
\displaystyle\sumAllPointsi 
\displaystyle\frac{\Vert \hatvb_i \!-\! \vq \kron \hatva_i \kron \vq\inv 
+ \vq\tran\vq_i \hatvb_i \!-\! \vq \kron \hatva_i \kron \vq_i\inv \Vert^2  
}{4\betaNoise_i^2} \nonumber \hspace{-5mm}\\
&\displaystyle  + \frac{1-\vq\tran\vq_i}{2}\barcsq.   \hspace{-5mm}
\eea
%
which involves $N+1$ quaternions ($\vq$ and $\vq_i$, $i=1,\ldots,N$).
\end{proposition}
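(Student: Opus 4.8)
The plan is to exhibit an explicit term-by-term identity between the two objectives under the substitution $\vq_i = \theta_i\vq$, which sets up a value-preserving bijection between feasible points of~\eqref{eq:TLSadditiveForm} and~\eqref{eq:TLSadditiveForm2} that keeps the $\vq$ component fixed. First I would record the correspondence between the two feasible sets: for a fixed unit quaternion $\vq$, the constraint $\vq_i \in \{\pm\vq\}$ is precisely the statement that $\vq_i = \theta_i\vq$ for some $\theta_i \in \{+1,-1\}$, and since $\Vert\vq\Vert=1$ the map $\theta_i \mapsto \theta_i\vq$ is a bijection onto $\{\pm\vq\}$ with inverse $\theta_i = \vq\tran\vq_i$. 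Under this substitution I would establish two elementary consequences, using $\Vert\vq\Vert=1$ and the definition~\eqref{eq:q_inverse} of the quaternion inverse: namely $\vq\tran\vq_i = \theta_i$, and $\vq_i\inv = \theta_i\,\vq\inv$ (reversing the sign of the vector part commutes with the scalar multiple $\theta_i$).

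With these two identities in hand I would match the two cost terms separately. The outlier term is immediate: $\frac{1-\vq\tran\vq_i}{2}\barcsq = \frac{1-\theta_i}{2}\barcsq$, which is exactly the second summand of~\eqref{eq:TLSadditiveForm}. For the inlier term I would abbreviate the residual as $\ve_i \doteq \hatvb_i - \vq\kron\hatva_i\kron\vq\inv$ and substitute $\vq\tran\vq_i = \theta_i$ together with $\vq_i\inv = \theta_i\vq\inv$ into the vector inside the norm; using that the quaternion product is linear in each argument (per~\eqref{eq:Omega_1}), the two extra summands collapse to $\theta_i\hatvb_i - \theta_i\,\vq\kron\hatva_i\kron\vq\inv = \theta_i\ve_i$. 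Hence the whole argument of the norm becomes $(1+\theta_i)\ve_i$ and the numerator equals $(1+\theta_i)^2\Vert\ve_i\Vert^2$.

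The final step is where the binary structure is essential: because $\theta_i\in\{+1,-1\}$ we have $\theta_i^2 = 1$, so $(1+\theta_i)^2 = 2(1+\theta_i)$, which gives $\frac{(1+\theta_i)^2}{4\betaNoise_i^2}\Vert\ve_i\Vert^2 = \frac{1+\theta_i}{2}\frac{\Vert\ve_i\Vert^2}{\betaNoise_i^2}$, reproducing the first summand of~\eqref{eq:TLSadditiveForm}. Summing over $i$ shows the two objectives coincide value-for-value on corresponding feasible points, so their minima, and in particular the optimal $\vq$, are identical. I do not anticipate a genuine obstacle, as the argument is a direct computation; the only places demanding care are the manipulation of the quaternion inverse and product under the scalar $\theta_i$ (verifying $\vq_i\inv = \theta_i\vq\inv$ and pulling $\theta_i$ through the products) and the collapse $(1+\theta_i)^2 = 2(1+\theta_i)$, which relies on the integrality $\theta_i^2=1$ and would fail for a generic real $\theta_i$.
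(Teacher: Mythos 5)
Your proof is correct and follows essentially the same route as the paper's: the same substitution $\vq_i=\theta_i\vq$ with $\theta_i=\vq\tran\vq_i$ and $\vq_i\inv=\theta_i\vq\inv$, and the same key algebraic fact (your collapse $(1+\theta_i)^2=2(1+\theta_i)$ is exactly the paper's idempotence trick of moving $\tfrac{1+\theta_i}{2}$ inside the squared norm, just read in the reverse direction). Your explicit statement of the value-preserving bijection between feasible sets is a slightly more careful packaging of what the paper does implicitly, but the substance is identical.
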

While a formal proof is given in the \supp, it is fairly easy to see that 
if $\vq_i = \{\pm\vq\}$, or equivalently, $\vq_i = \theta_i \vq$ with $\theta_i \in \{\pm1\}$, then 
$\vq_i\tran\vq = \theta_i$, and $\vq \kron \hatva_i \kron \vq_i\inv =  \theta_i ( \vq \kron \hatva_i \kron \vq\inv)$ which exposes the relation between~\eqref{eq:TLSadditiveForm}  and~\eqref{eq:TLSadditiveForm2}.  
We dubbed the re-parametrization~\eqref{eq:TLSadditiveForm2} \emph{binary cloning} since now we created a ``clone'' $\vq_i$ for each measurement, such that $\vq_i = \vq$ for inliers (recall that $\vq_i = \theta_i \vq$) and $\vq_i = -\vq$ for outliers.

{\bf From Quaternions to \QCQP.}
We conclude this section by showing that~\eqref{eq:TLSadditiveForm2} can be actually written as a \QCQP. 
This observation is non-trivial since~\eqref{eq:TLSadditiveForm2} has a \emph{quartic} cost and $\vq_i = \{\pm\vq\}$ 
is not in the form of a quadratic constraint. 
The re-formulation as a \QCQP is given in the following.
\vspace{-2mm}
\begin{proposition}[Binary Cloning as a \QCQP] \label{prop:qcqp}
Define a single column vector $\vxx = [\vq\tran\ \vq_1\tran\ \dots\ \vq_N\tran]\tran$ 
stacking all variables in Problem~\eqref{eq:TLSadditiveForm2}. 
Then, Problem~\eqref{eq:TLSadditiveForm2}  
is equivalent (in the sense that they admit the same 
optimal solution $\vq$) to the following Quadratically-Constrained Quadratic Program:
\beal \label{eq:TLSBinaryClone}
\displaystyle\min_{\vxx \in \Real{4(N+1)}} & \sum\limits_{i=1}^N \vxx\tran \MQ_i \vxx \\
\subject &  \vxx_q\tran \vxx_q = 1 \hspace{30mm} \\
& \vxx_{q_i} \vxx_{q_i}\tran = \vxx_{q} \vxx_{q}\tran, \forall i = 1,\dots,N 
\eeal
where $\MQ_i \in \Real{4(N+1)\times 4(N+1)}$ ($i=1,\ldots,N$) are known symmetric matrices that depend on the 3D vectors $\va_i$ and 
$\vb_i$ (the explicit expression is given in the \supp), and the notation $\vxx_q$ (resp. $\vxx_{q_i}$) denotes the 4D subvector of $\vxx$ corresponding to $\vq$ (resp. $\vq_i$).
\end{proposition}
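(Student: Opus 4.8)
The plan is to prove \eqref{eq:TLSBinaryClone} by matching \emph{separately} the feasible sets and the objective values of \eqref{eq:TLSadditiveForm2} and \eqref{eq:TLSBinaryClone}; since the two programs would then be literally the same optimization over $\vxx$, they must share the same optimal $\vq$. I would start with the constraints. The first constraint $\vxx_q\tran\vxx_q=1$ is exactly $\norm{\vq}=1$, i.e.\ $\vq\in\sphere{3}$. For the second, I would show that $\vxx_{q_i}\vxx_{q_i}\tran=\vxx_q\vxx_q\tran$ together with $\norm{\vq}=1$ is equivalent to $\vq_i=\pm\vq$: the forward direction is immediate since $(\pm\vq)(\pm\vq)\tran=\vq\vq\tran$, and for the converse, taking the trace gives $\norm{\vq_i}^2=\norm{\vq}^2=1$, while equality of the two rank-one matrices $\vq_i\vq_i\tran=\vq\vq\tran$ forces $\vq_i\in\mathrm{span}(\vq)$; combined with unit norm this yields $\vq_i=\pm\vq$, which is the binary-cloning constraint of \eqref{eq:TLSadditiveForm2}.

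For the objective I would first expose the quadratic structure. By \eqref{eq:quatProduct} the quaternion product is linear in each factor, and by \eqref{eq:q_inverse} the inverse is a fixed linear map of its argument; hence $\vq\kron\hatva_i\kron\vq\inv$, $\vq\kron\hatva_i\kron\vq_i\inv$, and the scalar $\vq\tran\vq_i$ are all homogeneous of degree two in the entries of $\vxx$, and the constant $\hatvb_i$ can be homogenized as $\hatvb_i\norm{\vq}^2$. I would also use that $\MOmega_1(\cdot)$ is $\norm{\cdot}$-scaled orthogonal, so the quaternion product is norm-multiplicative; grouping the two summands containing $\hatvb_i$ and the two containing the rotated $\hatva_i$ (using $\vq\inv+\vq_i\inv=(\vq+\vq_i)\inv$), the expression inside the norm in \eqref{eq:TLSadditiveForm2} becomes $(1+\vq\tran\vq_i)\hatvb_i-\vq\kron\hatva_i\kron(\vq+\vq_i)\inv$, and its ``pure square'' contribution $\norm{\vq\kron\hatva_i\kron(\vq+\vq_i)\inv}^2=\norm{\va_i}^2\norm{\vq+\vq_i}^2$ is already quadratic in $\vxx$ rather than quartic.

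The delicate step is eliminating the genuinely quartic terms that remain after expanding the squared norm, namely $(\vq\tran\vq_i)^2\norm{\vb_i}^2$ and cross terms of the form $(\vq\tran\vq_i)\bigl(\vq\kron\hatva_i\kron\vq\inv\bigr)$ and $(\vq\tran\vq_i)\bigl(\vq\kron\hatva_i\kron\vq_i\inv\bigr)$. Here I would invoke feasibility: on the feasible set $\vq_i=\theta_i\vq$ with $\theta_i=\vq\tran\vq_i\in\{\pm1\}$, so $(\vq\tran\vq_i)^2$ has the degree-two representative $\norm{\vq}^2$, and $(\vq\tran\vq_i)(\vq\kron\hatva_i\kron\vq\inv)=\theta_i^2\,\vq\kron\hatva_i\kron\vq_i\inv=\vq\kron\hatva_i\kron\vq_i\inv$ (and symmetrically for the other cross term). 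Every quartic monomial thus collapses to a bilinear one that agrees with it on the whole feasible set. Collecting the homogenized $\norm{\vb_i}^2$ term, the surviving degree-two cross terms linear in $\hatvb_i$, and the constant $\norm{\va_i}^2$, and writing each as $\vxx\tran(\cdot)\vxx$ and symmetrizing, yields the single symmetric matrix $\MQ_i$; its closed form is then read off entry-by-entry (deferred to the \supp).

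I expect the main obstacle to be precisely this last reduction: one must check that each quartic term admits a degree-two representative that coincides with it \emph{everywhere} on the feasible set---not merely at a single point---so that the \emph{fixed} quadratic form $\sum_i\vxx\tran\MQ_i\vxx$ reproduces the cost of \eqref{eq:TLSadditiveForm2} on the entire feasible region, and one must verify that the assembled $\MQ_i$ is symmetric and independent of $\vxx$. The two levers that make this work are the norm-multiplicativity of the quaternion product (which removes the square terms) and the identity $\theta_i^2=1$ (which linearizes the cross terms); once they are in place, the construction of $\MQ_i$ is purely mechanical. Combining the constraint equivalence with the objective identity shows that \eqref{eq:TLSadditiveForm2} and \eqref{eq:TLSBinaryClone} are the same problem, hence share the optimal $\vq$.
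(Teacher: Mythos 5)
Your proposal is correct and follows essentially the same route as the paper's proof: both collapse the quartic terms by exploiting $(\vq\tran\vq_i)^2=\theta_i^2=1$ together with unit-norm/rotation-invariance of the quaternion product, express the surviving bilinear terms through the linear operators $\MOmega_1(\cdot),\MOmega_2(\cdot)$ so the cost becomes a fixed quadratic form in $\vxx$, and prove constraint equivalence by showing $\vq_i\vq_i\tran=\vq\vq\tran$ forces $\vq_i=\pm\vq$. Your converse constraint argument (trace plus the span of the rank-one factor) is a slightly slicker packaging of the paper's entry-by-entry sign argument, and your $(\vq+\vq_i)$ grouping is a cosmetic variant of the paper's direct expansion of the squared norm, but the substance is identical.
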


A complete proof of Proposition~\ref{prop:qcqp} is given in the \supp.
 Intuitively, (i) we developed the squares in the cost function~\eqref{eq:TLSadditiveForm2}, (ii) we used the properties of unit quaternions (Section~\ref{sec:TLSFormulation}) to simplify the expression to a quadratic cost, and (iii) we adopted the 
 more compact notation afforded by the vector $\vxx$ to obtain~\eqref{eq:TLSBinaryClone}. 
\section{Semidefinite Relaxation}
\label{sec:SDPRelaxation}
\vspace{-2mm}
Problem~\eqref{eq:TLSBinaryClone} writes the \TWlong problem as a \QCQP. 
Problem~\eqref{eq:TLSBinaryClone} is still a non-convex problem (quadratic equality constraints are non-convex).
Here we develop a convex semidefinite programming (SDP) relaxation for problem~\eqref{eq:TLSBinaryClone}. 

The crux of the relaxation consists in rewriting problem~\eqref{eq:TLSBinaryClone}  
as a function of the following matrix:
\bea \label{eq:matrixZ}
\MZ = \vxx \vxx\tran = \bmat{cccc}
\vq\vq\tran & \vq \vq_1\tran & \cdots & \vq\vq_N\tran \\
\star & \vq_1\vq_1\tran & \cdots & \vq_1\vq_N\tran \\
\vdots & \vdots & \ddots & \vdots \\
\star & \star & \cdots & \vq_N \vq_N\tran 
\emat.
\eea
For this purpose we note that if we define $\MQ \doteq \sum_{i=1}^N \MQ_i$:
\beal
\sum\limits_{i=1}^N \vxx\tran \MQ_i \vxx = \vxx\tran \MQ \vxx = 
\trace{ \MQ \vxx \vxx\tran} = \trace{\MQ \MZ}
\eeal
and that $\vxx_{q_i} \vxx_{q_i}\tran = [\MZ]_{q_i q_i}$, where  
$[\MZ]_{q_i q_i}$ denotes the $4\times4$ diagonal block of $\MZ$ with row and column indices corresponding to $\vq_i$.
Since any matrix in the form $\MZ = \vxx \vxx\tran$ is a positive-semidefinite rank-1 matrix, we obtain:
\vspace{-2mm}
\begin{proposition}[Matrix Formulation of Binary Cloning] \label{prop:qcqpZ}
Problem~\eqref{eq:TLSBinaryClone}
is equivalent (in the sense that optimal solutions of a problem can be mapped to optimal solutions of the other) 
to the following non-convex program:
\bea \label{eq:qcqpZ}
\min_{\MZ \succeq 0} & \trace{\MQ \MZ} \nonumber \\
\subject & \trace{[\MZ]_{qq}} = 1 \nonumber \\
& [\MZ]_{q_i q_i} = [\MZ]_{qq}, \;\; \forall i=1,\dots,N \nonumber \\
& \rank{\MZ} = 1
\eea
\end{proposition}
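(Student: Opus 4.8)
The plan is to establish the equivalence by exhibiting a value-preserving correspondence between the feasible set of the QCQP~\eqref{eq:TLSBinaryClone} and the rank-one feasible points of the matrix program~\eqref{eq:qcqpZ}, under the lifting map $\vxx \mapsto \MZ = \vxx\vxx\tran$. The paragraph preceding the proposition already supplies the three ingredients I need: the objective identity $\sum_i \vxx\tran\MQ_i\vxx = \trace{\MQ\MZ}$ with $\MQ = \sum_i \MQ_i$, the block identity $\vxx_{q_i}\vxx_{q_i}\tran = [\MZ]_{q_iq_i}$, and the fact that $\MZ = \vxx\vxx\tran$ is always PSD and rank one. The remaining work is purely to check that each constraint of one problem matches the corresponding constraint of the other under this map and its inverse.

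First I would treat the forward direction. Given any $\vxx$ feasible for~\eqref{eq:TLSBinaryClone}, I set $\MZ = \vxx\vxx\tran$, which is automatically PSD with $\rank{\MZ}=1$. Since $[\MZ]_{qq} = \vxx_q\vxx_q\tran$, the trace constraint $\trace{[\MZ]_{qq}} = \vxx_q\tran\vxx_q = 1$ recovers the unit-norm constraint; likewise the matrix constraint $\vxx_{q_i}\vxx_{q_i}\tran = \vxx_q\vxx_q\tran$ of the QCQP is, block by block, exactly $[\MZ]_{q_iq_i} = [\MZ]_{qq}$. Together with the objective identity, this shows $\MZ$ is feasible for~\eqref{eq:qcqpZ} with the same cost. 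For the reverse direction, I would start from any $\MZ$ feasible for~\eqref{eq:qcqpZ}: because $\MZ \succeq 0$ and $\rank{\MZ}=1$, it admits a factorization $\MZ = \vxx\vxx\tran$ for some $\vxx \in \Real{4(N+1)}$. Partitioning $\vxx$ into subvectors $\vxx_q, \vxx_{q_1}, \dots, \vxx_{q_N}$ according to the block structure of $\MZ$, the diagonal blocks read $[\MZ]_{qq} = \vxx_q\vxx_q\tran$ and $[\MZ]_{q_iq_i} = \vxx_{q_i}\vxx_{q_i}\tran$, so the two matrix-program constraints translate verbatim back into $\vxx_q\tran\vxx_q = 1$ and $\vxx_{q_i}\vxx_{q_i}\tran = \vxx_q\vxx_q\tran$, i.e.\ feasibility for~\eqref{eq:TLSBinaryClone}, again with equal objective.

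The step I expect to require the most care is the sign ambiguity in the rank-one factorization: $\MZ$ determines $\vxx$ only up to the global flip $\vxx \mapsto -\vxx$. I would argue this is harmless, because $\vq$ and $-\vq$ encode the same rotation by the double-cover property noted after eq.~\eqref{eq:q_pointRot}, and because every constraint and the objective in~\eqref{eq:TLSBinaryClone} is invariant under a global sign change; hence the recovered quaternion $\vq$ is well defined as a rotation and the two factorizations yield the same optimizer. Since the map and its inverse preserve cost and feasibility on both sides, optimal solutions of one problem are carried to optimal solutions of the other, which is exactly the asserted notion of equivalence.

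The only genuinely new element relative to Propositions~\ref{prop:binaryCloning} and~\ref{prop:qcqp} is the explicit rank-one constraint, which is what keeps~\eqref{eq:qcqpZ} an \emph{exact} reformulation rather than a relaxation; I would emphasize that dropping $\rank{\MZ}=1$ (as done in the next section) is precisely the move that produces the convex SDP, and that the present proposition isolates this as the sole source of non-convexity remaining after the lifting.
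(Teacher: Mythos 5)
Your proposal is correct and follows essentially the same route as the paper's proof: the lifting $\vxx \mapsto \MZ = \vxx\vxx\tran$, the objective identity $\sum_i \vxx\tran\MQ_i\vxx = \trace{\MQ\MZ}$, block-by-block matching of the constraints, and the observation that $\MZ \succeq 0$ together with $\rank{\MZ}=1$ is exactly the condition $\MZ = \vxx\vxx\tran$ for some $\vxx$. Your explicit treatment of the sign ambiguity in the rank-one factorization is a small addition the paper leaves implicit (it is harmless precisely because the objective and constraints of~\eqref{eq:TLSBinaryClone} are invariant under $\vxx \mapsto -\vxx$), but it does not change the argument.
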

\vspace{-2mm}
At this point it is straightforward to  develop a (standard) SDP relaxation 
by dropping the rank-1 constraint, which is the only source of non-convexity in~\eqref{eq:qcqpZ}.
\vspace{-2mm}
\begin{proposition}[Naive SDP Relaxation] \label{prop:naiveRelax}
The following SDP is a convex relaxation of Problem~\eqref{eq:qcqpZ}:
\bea \label{eq:naiveRelaxation}
\min_{\MZ \succeq 0}  & \trace{\MQ \MZ} \\
\subject  & \trace{[\MZ]_{qq}} = 1 \nonumber \\
& [\MZ]_{q_i q_i} = [\MZ]_{qq}, \forall i=1,\dots,N \nonumber
\eea
\end{proposition}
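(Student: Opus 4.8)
The plan is to treat this as the standard Shor-type relaxation argument, observing that Proposition~\ref{prop:qcqpZ} has already done all the hard work: it recast the \QCQP in terms of the lifted matrix variable $\MZ$, the objective $\trace{\MQ\MZ}$ is \emph{linear} in $\MZ$, and the only non-convex constraint in Problem~\eqref{eq:qcqpZ} is the rank-one constraint $\rank{\MZ}=1$. Establishing that~\eqref{eq:naiveRelaxation} is a convex relaxation then decomposes into exactly two independent claims: (i) that~\eqref{eq:naiveRelaxation} lower-bounds~\eqref{eq:qcqpZ} (the \emph{relaxation} property), and (ii) that~\eqref{eq:naiveRelaxation} is a \emph{convex} program. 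I would prove these in turn.

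For the relaxation property, I would argue purely at the level of feasible sets. Every $\MZ$ that is feasible for Problem~\eqref{eq:qcqpZ} satisfies the positive-semidefiniteness constraint $\MZ \succeq 0$, the normalization $\trace{[\MZ]_{qq}}=1$, and the block-equality constraints $[\MZ]_{q_i q_i} = [\MZ]_{qq}$; these are precisely the constraints retained in~\eqref{eq:naiveRelaxation}. Hence the feasible set of~\eqref{eq:naiveRelaxation} is obtained from that of~\eqref{eq:qcqpZ} by discarding the single constraint $\rank{\MZ}=1$, and is therefore a superset of it. Since both problems minimize the same objective $\trace{\MQ\MZ}$, minimizing over the larger set can only decrease (or leave unchanged) the optimal value, which gives the desired inequality: the optimal value of~\eqref{eq:naiveRelaxation} is no larger than that of~\eqref{eq:qcqpZ}.

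For convexity, I would check each ingredient. The objective $\trace{\MQ\MZ}$ is a linear functional of the entries of $\MZ$. The constraint $\MZ \succeq 0$ defines the positive-semidefinite cone, which is closed and convex. The constraint $\trace{[\MZ]_{qq}}=1$ equates a sum of four diagonal entries of $\MZ$ to a constant, and each constraint $[\MZ]_{q_i q_i}=[\MZ]_{qq}$ equates, entry by entry, two $4\times 4$ diagonal blocks of $\MZ$; both are linear equalities in the entries of $\MZ$ and hence define affine subspaces. The feasible set is the intersection of the PSD cone with these affine subspaces, an intersection of convex sets and therefore convex, and we minimize a linear function over it. Thus~\eqref{eq:naiveRelaxation} is a convex semidefinite program.

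I do not expect any genuine obstacle in this proof: once the problem has been lifted in Proposition~\ref{prop:qcqpZ}, the argument is entirely bookkeeping, and the only point worth stating carefully is that dropping $\rank{\MZ}=1$ removes the sole non-convexity. The substantive and difficult question—foreshadowed by the label ``naive''—is not validity but \emph{tightness}, i.e., whether the relaxation admits a rank-one minimizer so that the bound is attained and a quaternion $\vq$ can be extracted. As the authors indicate that this naive relaxation is \emph{not} tight in general, I would expect the real analytical effort to reside in the refined relaxation that follows, rather than in this proposition.
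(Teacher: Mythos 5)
Your proposal is correct and matches the paper's own proof essentially verbatim: the paper likewise splits the claim into (i) the relaxation property, obtained by noting that dropping $\rank{\MZ}=1$ enlarges the feasible set and hence can only lower the optimal value, and (ii) convexity, from the linearity of the objective and equality constraints together with convexity of the constraint $\MZ \succeq 0$. There is nothing to add; your closing remark that the real difficulty lies in tightness rather than validity is also consistent with how the paper proceeds (Theorem~\ref{thm:strongDualityNoiseless} and Proposition~\ref{prop:relaxationRedundant}).
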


The following theorem proves that the naive SDP relaxation~\eqref{prop:naiveRelax} 
is tight in the absence of noise and outliers.

\begin{theorem}[Tightness in Noiseless and Outlier-free Wahba]\label{thm:strongDualityNoiseless}
When there is no noise and no outliers in the measurements, and there are at least two vector measurements ($\va_i$'s) that are \edit{not parallel} to each other, the SDP relaxation~\eqref{eq:naiveRelaxation} is always \emph{tight}, \ie:
\begin{enumerate}
\item the optimal cost of~\eqref{eq:naiveRelaxation} matches the optimal cost of the \QCQP~\eqref{eq:TLSBinaryClone},
\item the optimal solution $\MZ^\star$ of~\eqref{eq:naiveRelaxation} has rank 1, and 
\item $\MZ^\star$ can be written as $\MZ^\star = (\vxx^\star) (\vxx^\star)\tran$ where $\vxx^\star \doteq [(\vq^\star)\tran \; (\vq_1^\star)\tran \; \ldots \; (\vq_N^\star)\tran]$ is a global minimizer of the original non-convex problem~\eqref{eq:TLSBinaryClone}.
\end{enumerate}
\end{theorem}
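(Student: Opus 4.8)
I would prove all three claims simultaneously through Lagrangian duality, by exhibiting a single explicit dual certificate built from the ground-truth quaternion. In the noiseless, outlier-free regime there is a unit quaternion $\vq^\star$ (representing the true $\MR$) with $\hatvb_i = \vq^\star \kron \hatva_i \kron (\vq^\star)\inv$ for every $i$. The lifted point $\vxstar = [(\vq^\star)\tran\ (\vq^\star)\tran\ \cdots\ (\vq^\star)\tran]\tran$, i.e.\ all clones equal to $\vq^\star$ (all $\theta_i=+1$), is feasible for the \QCQP~\eqref{eq:TLSBinaryClone} and annihilates every summand of the \TLS cost; since each summand of~\eqref{eq:TLSWahba} is the minimum of two nonnegative numbers, no feasible point attains negative cost, so the \QCQP optimum equals $0$. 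As~\eqref{eq:naiveRelaxation} only drops the rank-one constraint, its optimum is $\le 0$, and it remains to show it is $\ge 0$ and attained only at a rank-one $\MZ$.

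\textbf{Dual certificate.} Next I would form the dual of~\eqref{eq:naiveRelaxation} using a scalar multiplier $\mu$ for $\trace{[\MZ]_{qq}}=1$ and symmetric multipliers $\MLambda_i \in \sym(4)$ for the block constraints $[\MZ]_{q_iq_i}=[\MZ]_{qq}$. The associated dual slack is $\MS = \MQ - \mathcal{A}^{*}(\mu,\{\MLambda_i\})$, where the adjoint $\mathcal{A}^{*}$ places $\mu\eye_4 - \sum_i \MLambda_i$ on the $(q,q)$ block and $\MLambda_i$ on each $(q_i,q_i)$ block; the dual reads $\max\,\mu$ subject to $\MS \succeq 0$, and weak duality gives $(\text{relaxed value}) \ge \mu$ for every dual-feasible $\mu$. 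Since $\MZ = \tfrac14 \eye_{4(N+1)}$ is strictly feasible, Slater's condition holds and strong duality with attainment follows. The core of the argument is to construct explicit multipliers with $\mu^\star = 0$ and $\MLambda_i^\star$ chosen to cancel the cross ($\vq$--$\vq_i$) terms of each $\MQ_i$, so that $\MS^\star$ decomposes into positive-semidefinite, sum-of-squares-type blocks that each annihilate $\vxstar$. This simultaneously certifies $\MS^\star \succeq 0$ and $\mu^\star = 0$, hence the relaxed value is exactly $0$, which is claim~1.

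\textbf{Rank and recovery.} For any optimal $\MZ^\star$ and the certificate $\MS^\star$, matching objectives give $\trace{\MS^\star \MZ^\star} = 0$; with $\MS^\star,\MZ^\star \succeq 0$ this forces $\MS^\star \MZ^\star = 0$, so $\mathrm{range}(\MZ^\star) \subseteq \ker(\MS^\star)$. If $\ker(\MS^\star) = \mathrm{span}\{\vxstar\}$, then $\rank(\MZ^\star) \le 1$, and the constraint $\trace{[\MZ^\star]_{qq}}=1$ makes it exactly $1$, yielding $\MZ^\star = \vxstar(\vxstar)\tran$ with $\vq^\star$ a global minimizer of~\eqref{eq:TLSBinaryClone}; these are claims~2 and~3.

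\textbf{Main obstacle.} Proving $\MS^\star \succeq 0$ with a one-dimensional kernel is the crux, and it is exactly where the hypotheses enter: the noiseless assumption is what places $\vxstar$ in $\ker(\MS^\star)$, while the existence of two \emph{non-parallel} $\va_i$ is what removes the residual rotational freedom and collapses the kernel to $\mathrm{span}\{\vxstar\}$. If all $\va_i$ were parallel, the rotation about their common axis would be unobservable, $\ker(\MS^\star)$ would be strictly larger, and rank-one tightness would fail; carrying out the explicit block construction of $\MLambda_i^\star$ and verifying this precise kernel dimension is the technical heart of the proof.
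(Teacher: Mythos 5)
Your overall strategy is the same as the paper's: show the QCQP optimum is zero at the lifted ground-truth point $\vxstar$, pass to the Lagrangian dual of the naive relaxation, exhibit multipliers with $\mu^\star=0$ and a positive-semidefinite slack annihilating $\vxstar$, then use $\trace{\MS^\star \MZ^\star}=0 \Rightarrow \MS^\star\MZ^\star=0$ together with a one-dimensional kernel to force $\MZ^\star=(\vxstar)(\vxstar)\tran$. Everything you actually wrote is correct, including the form of the adjoint, the weak-duality bound, and the correct diagnosis that the non-parallelism hypothesis is exactly what collapses $\ker(\MS^\star)$ to $\mathrm{span}\{\vxstar\}$ (with parallel $\va_i$'s the unobservable rotation about the common axis enlarges the kernel).

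The genuine gap is that the dual certificate is never constructed, and that construction is the entire mathematical content of the theorem; the rest is standard SDP duality. Moreover, the one guiding principle you do state --- choose $\MLambda_i^\star$ to ``cancel the cross $\vq$--$\vq_i$ terms of each $\MQ_i$'' --- cannot work as stated: the multipliers of the naive relaxation enter the slack only through the diagonal blocks $(q,q)$ and $(q_i,q_i)$, so the off-diagonal blocks $\MQ_{0i}$ of $\MQ$ survive in $\MS^\star$ unchanged no matter how you choose $\mu$ and $\MLambda_i$. The certificate must instead \emph{dominate} these fixed cross terms by adjusting the diagonal blocks, and finding diagonal adjustments that do so while keeping $\vxstar$ in the kernel is the hard part. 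The paper's route: (i) conjugate the slack by the block-diagonal orthogonal matrix $\diag{\MOmega_1(\vq^\star),\dots,\MOmega_1(\vq^\star)}$, reducing to the case $\MR=\eye_3$ so every block becomes an explicit function of the $\va_i$'s; (ii) show stationarity forces $\mu^\star=0$ and pins down part of each transformed multiplier ($\valpha_i=\zero_3$, $\lambda_i=-\barcsq/4$ in eq.~\eqref{eq:generalFormBarLambda}), leaving free only a symmetric $3\times 3$ block $\ME_{ii}$; (iii) make the explicit choice $\ME_{ii}=\vSkew{\va_i}^2-\frac{\barcsq}{4}\eye_3$ and verify by direct computation that the slack's quadratic form splits into per-measurement sums of squares, $m_i \geq \Vert \va_i \times (\barvu_0+\barvu_i)\Vert^2 + \frac{\barcsq}{4}\Vert \barvu_0-\barvu_i\Vert^2 + \frac{\barcsq}{4}(u_0-u_i)^2$, whose common zero set is one-dimensional precisely when two $\va_i$'s are non-parallel. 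Without this (or an equivalent) explicit choice and verification, claims 1--3 remain unproved: your outline says what the certificate must accomplish, but not how to produce one.
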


A formal proof, based on Lagrangian duality theory, is given in the \supp. While Theorem~\ref{thm:strongDualityNoiseless} ensures that the naive SDP relaxation computes optimal solutions in noiseless and outlier-free problems, our original motivation was to solve problems with many outliers. 
One can still empirically assess tightness by solving the SDP and verifying if a rank-1 solution is obtained.
Unfortunately, the naive relaxation produces solutions with rank larger than 1 in the presence of outliers, \cf~Fig.~\ref{fig:naiveRelaxation}(a).
Even when the rank is larger than 1, one can \emph{round} the solution by computing a rank-1 approximation of 
$\MZ^\star$; however, we empirically observe that, whenever the rank is larger than 1, the 
rounded estimates exhibit large errors, as shown in Fig.~\ref{fig:naiveRelaxation}(b).

To address these issues, we propose to add \emph{redundant constraints} to \emph{tighten} (improve) the SDP relaxation, \edit{inspired by~\cite{Briales17cvpr-registration,Tron15rssws3D-dualityPGO3D}}.
The \edit{following} proposed relaxation
is tight (empirically satisfies the three claims of Theorem~\ref{thm:strongDualityNoiseless}) even in the presence of noise and \maxOutliers of outliers.
\begin{proposition}[SDP Relaxation with Redundant Constraints] \label{prop:relaxationRedundant}
The following SDP is a convex relaxation of~\eqref{eq:qcqpZ}:
\bea \label{eq:relaxationRedundant}
\min_{\MZ \succeq 0} & \trace{\MQ \MZ} \label{eq:SDPrelax} \\
\subject 
& \trace{[\MZ]_{qq}} = 1 \nonumber \\
& [\MZ]_{q_i q_i} = [\MZ]_{qq}, \forall i=1,\dots,N \nonumber \\
& [\MZ]_{q q_i} = [\MZ]\tran_{q q_i}, \forall i=1,\dots,N \nonumber \\
& [\MZ]_{q_i q_j} = [\MZ]\tran_{q_i q_j}, \forall 1\leq i < j \leq N \nonumber
\eea
and it is always tighter, \ie the optimal objective  of~\eqref{eq:SDPrelax} is always closer to the optimal objective of~\eqref{eq:TLSBinaryClone}, when compared to the naive relaxation~\eqref{eq:naiveRelaxation}.
\end{proposition}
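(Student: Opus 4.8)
The plan is to establish the proposition in two parts: first, that~\eqref{eq:relaxationRedundant} is a genuine relaxation of the non-convex problem~\eqref{eq:qcqpZ}, and second, that it dominates the naive relaxation~\eqref{eq:naiveRelaxation}. The second part will be essentially immediate from set inclusion, so the real content lies in showing that the newly added equality constraints are \emph{redundant} for~\eqref{eq:qcqpZ}, i.e.\ that every feasible point of the original problem already satisfies them. Once redundancy is established, the feasible set of~\eqref{eq:qcqpZ} is contained in that of~\eqref{eq:relaxationRedundant}, which makes the latter a valid relaxation.

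To verify redundancy, I would take an arbitrary feasible $\MZ$ of~\eqref{eq:qcqpZ} and use its rank-one constraint to write $\MZ = \vxx\vxx\tran$ with $\vxx = [\vq\tran\ \vq_1\tran\ \dots\ \vq_N\tran]\tran$. The block-diagonal constraints $[\MZ]_{q_i q_i} = [\MZ]_{qq}$ then read $\vq_i\vq_i\tran = \vq\vq\tran$; since both sides are rank-one outer products of equal-norm vectors, this forces $\vq_i = \theta_i\vq$ for some $\theta_i\in\{\pm1\}$, recovering exactly the binary-cloning structure of Proposition~\ref{prop:binaryCloning}. Substituting, each off-diagonal block evaluates to $[\MZ]_{q q_i} = \vq\vq_i\tran = \theta_i\,\vq\vq\tran$ and $[\MZ]_{q_i q_j} = \vq_i\vq_j\tran = \theta_i\theta_j\,\vq\vq\tran$. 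Because $\vq\vq\tran$ is symmetric and these blocks are merely scalar multiples of it, the symmetry constraints $[\MZ]_{q q_i} = [\MZ]_{q q_i}\tran$ and $[\MZ]_{q_i q_j} = [\MZ]_{q_i q_j}\tran$ hold automatically. This proves redundancy, and hence that~\eqref{eq:relaxationRedundant} relaxes~\eqref{eq:qcqpZ}.

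For the tightness claim, I would observe that~\eqref{eq:relaxationRedundant} is obtained from~\eqref{eq:naiveRelaxation} by appending equality constraints, so its feasible set is a subset of the naive feasible set. Both are minimizations of the same linear objective $\trace{\MQ\MZ}$ over the positive-semidefinite cone, so shrinking the feasible set can only raise the optimal value. Letting $f^\star$ denote the common optimum of~\eqref{eq:TLSBinaryClone} and~\eqref{eq:qcqpZ} (which coincide by Proposition~\ref{prop:qcqpZ}), and $f_{\mathrm{naive}}, f_{\mathrm{red}}$ the optima of the two relaxations, the relaxation property gives $f_{\mathrm{naive}}\le f^\star$ and $f_{\mathrm{red}}\le f^\star$, while feasible-set inclusion gives $f_{\mathrm{naive}}\le f_{\mathrm{red}}$. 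The chain $f_{\mathrm{naive}}\le f_{\mathrm{red}}\le f^\star$ then shows the redundant relaxation is always at least as close to $f^\star$, as claimed.

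The only subtle step is the redundancy verification, and even there the algebra is routine once the identity $\vq_i = \theta_i\vq$ is extracted from the rank-one and block-equality constraints; the crucial observation is simply that every off-diagonal block of a feasible $\MZ$ is a scalar multiple of the symmetric matrix $\vq\vq\tran$. I do not anticipate any genuine obstacle here, as the tightness half reduces to a one-line monotonicity argument over nested feasible sets.
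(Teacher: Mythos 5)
Your proposal is correct and follows essentially the same route as the paper: both proofs establish that the added symmetry constraints are redundant for~\eqref{eq:qcqpZ} by exploiting the binary-cloning structure $\vq_i = \theta_i\vq$ (so that every off-diagonal block is $\theta_i\vq\vq\tran$ or $\theta_i\theta_j\vq\vq\tran$, hence symmetric), and then conclude tightness from the nesting of feasible sets, giving $f_{\mathrm{naive}}\le f_{\mathrm{red}}\le f^\star$. The only cosmetic difference is that you re-derive $\vq_i=\theta_i\vq$ directly from the rank-one and block-equality constraints of~\eqref{eq:qcqpZ}, whereas the paper invokes the already-proved equivalence with~\eqref{eq:TLSBinaryClone}; both are valid and the argument content is identical.
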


We name the improved relaxation~\eqref{eq:SDPrelax} \name (\nameLong).
While our current theoretical results only guarantee  tightness  in the noiseless and outlier-free case (Theorem~\ref{thm:strongDualityNoiseless} also holds for \name, since it is always tighter than the naive relaxation), 
in the next section we empirically demonstrate the tightness of~\eqref{eq:SDPrelax} 
in the face of noise and extreme outlier rates, \cf~Fig.~\ref{fig:naiveRelaxation}.

\vspace{-1mm}
\section{Experiments}
\label{sec:experiments}

We evaluate \name in both synthetic and real datasets for 
point cloud registration and image stitching showing that 
(i) the proposed relaxation is tight even with extreme (\maxOutliers) outlier rates, and  
(ii) \name is more accurate and robust than state-of-the-art techniques for rotation search. 

We implemented \name in Matlab, using \cvx~\cite{CVXwebsite} to model the convex programs~\eqref{eq:naiveRelaxation} and~\eqref{eq:SDPrelax}, and used \mosek~\cite{mosek} as the SDP solver. The parameters in \name are set according to Remark~\ref{rmk:chi2} with $p = 1-10^{-4}$.
\vspace{-1mm}
\subsection{Evaluation on Synthetic Datasets}
\label{sec:exp_synthetic}
\vspace{-1mm}
\myParagraph{Comparison against the Naive SDP Relaxation} We first test the performance of the naive SDP relaxation~\eqref{eq:naiveRelaxation} and \name~\eqref{eq:SDPrelax} under zero noise and increasing outlier rates. 
In each test, we sample $N=40$ unit-norm vectors $\calA=\{\va_i\}_{i=1}^N$ uniformly at random. Then we apply a random rotation $\MR$ to $\calA$ according to~\eqref{eq:generativeModel} with no additive noise to get $\calB=\{\vb_i\}_{i=1}^N$. To generate outliers, we replace a fraction of $\vb_i$'s with random unit-norm vectors. 
Results are averaged over 40 Monte Carlo runs. Fig.~\ref{fig:naiveRelaxation}(a) shows the rank of the solution produced by the naive SDP relaxation~\eqref{eq:naiveRelaxation} and \name~\eqref{eq:SDPrelax}; recall that the relaxation is tight when the rank is 1. 
Both relaxations are tight when there are no outliers, which validates Theorem~\ref{thm:strongDualityNoiseless}. However, the performance of the naive relaxation quickly degrades as the measurements include more outliers: at $10-40\%$, the naive relaxation starts becoming loose and completely breaks when the outlier ratio is above 40\%. On the other hand, \name produces a 
certifiably optimal rank-1 solution even with 90\% outliers. Fig.~\ref{fig:naiveRelaxation}(b) confirms that a tighter relaxation translates into more accurate rotation estimates.


\newcommand{\mpw}{4.2cm}
\begin{figure}[t]
	\begin{center}
	\begin{minipage}{\columnwidth}
	\hspace{-0.2cm}
	\begin{tabular}{cc}%
		\hspace{-3mm}
			\begin{minipage}{\mpw}%
			\centering%
			\includegraphics[width=1.0\columnwidth]{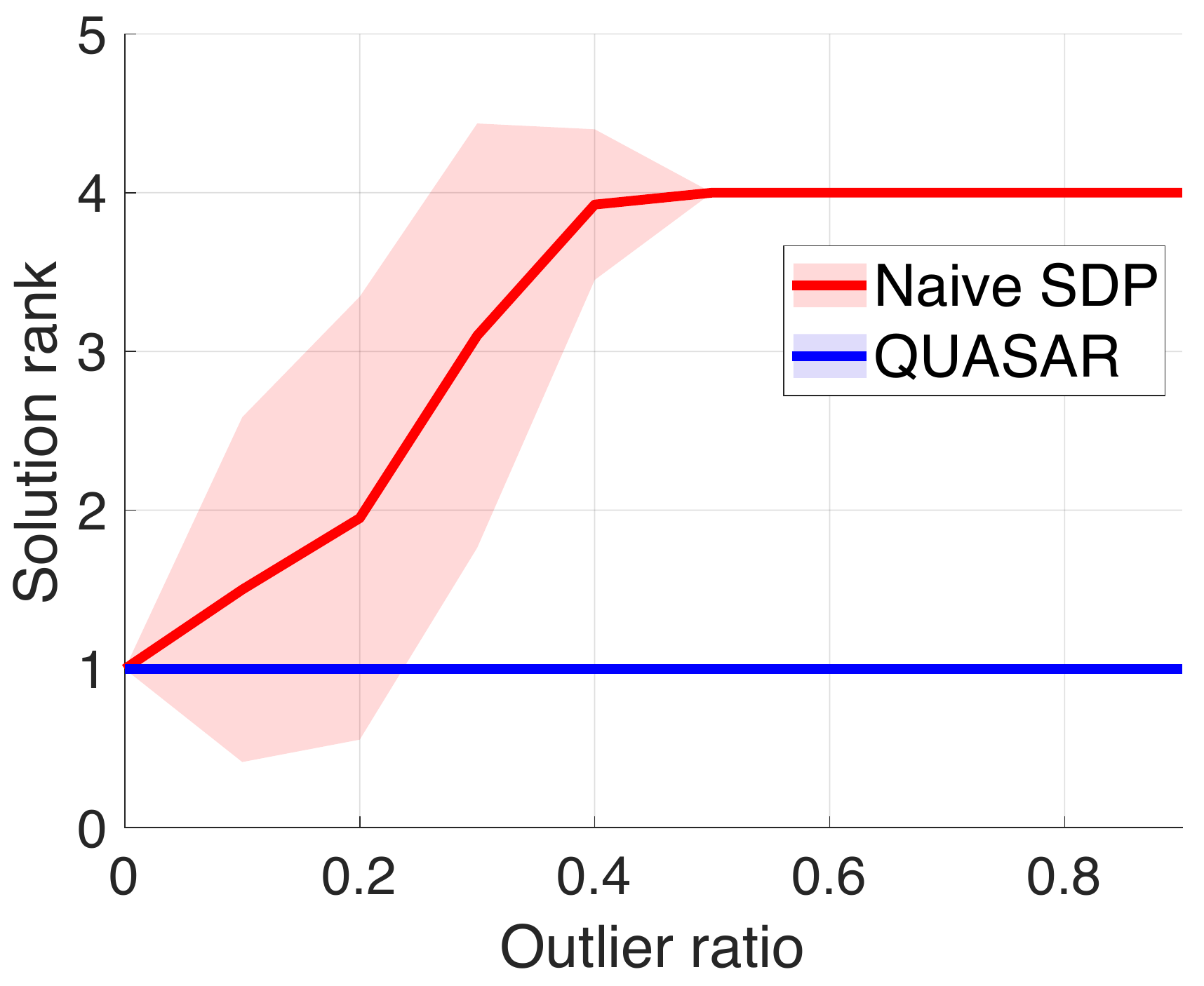}\\
			(a) Solution rank 
			\end{minipage}
		& \hspace{-3mm}
			\begin{minipage}{\mpw}%
			\centering%
			\includegraphics[width=1.0\columnwidth]{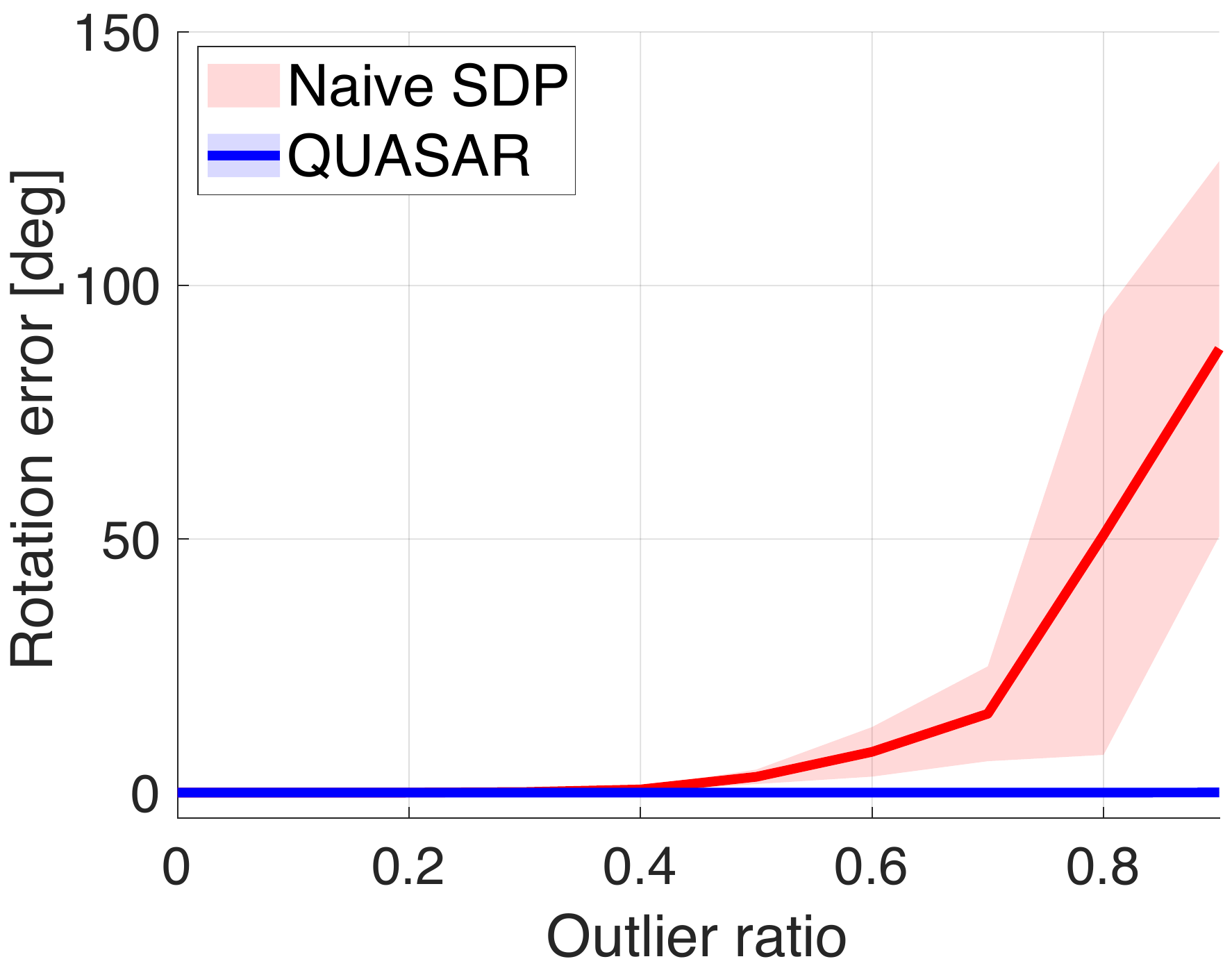}\\
			(b) Rotation errors
			\end{minipage}
		\end{tabular}
	\end{minipage}
	\begin{minipage}{\textwidth}
	\end{minipage}
	\caption{Comparison between the naive relaxation~\eqref{eq:naiveRelaxation} and the proposed relaxation~\eqref{eq:SDPrelax} for increasing outlier percentages. (a) Rank of solution (relaxation is tight when rank is 1); (b) rotation errors
	(solid line: mean; shaded area: 1-sigma standard deviation).
	\label{fig:naiveRelaxation}}
	\vspace{-12mm} 
	\end{center}
\end{figure}


\newcommand{\mpwthree}{6cm}
\newcommand{\myhspace}{\hspace{-2mm}}

\begin{figure*}[h]
	\begin{center}
	\begin{minipage}{\textwidth}
	\hspace{-0.7cm}
	\vspace{-2mm}
	\begin{tabular}{ccc}%
			\begin{minipage}{\mpwthree}%
			\centering%
			\includegraphics[width=\columnwidth]{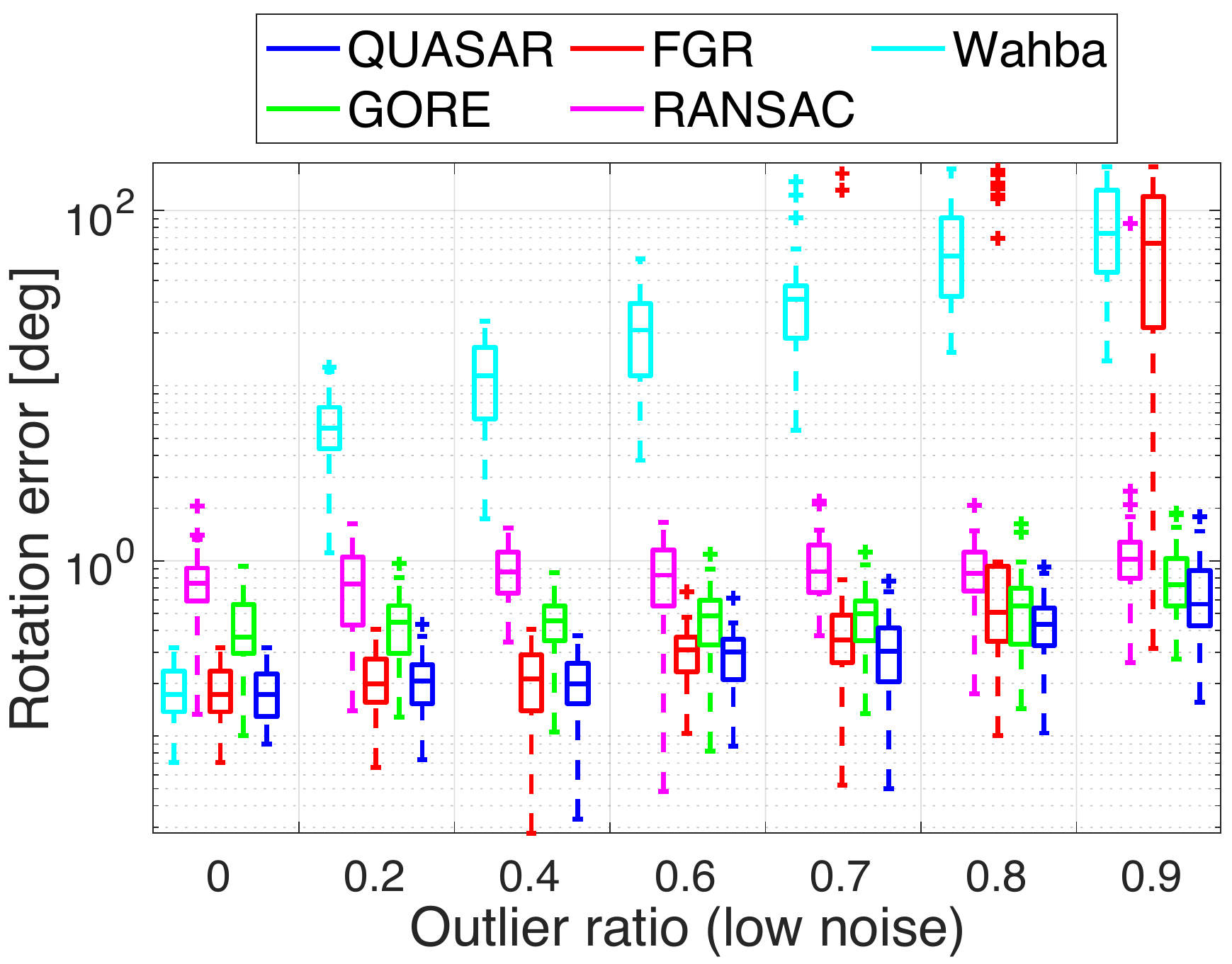} \\
			\end{minipage}
			\vspace{-3mm} 
		& \myhspace
			\begin{minipage}{\mpwthree}%
			\centering%
			\includegraphics[width=0.9\columnwidth]{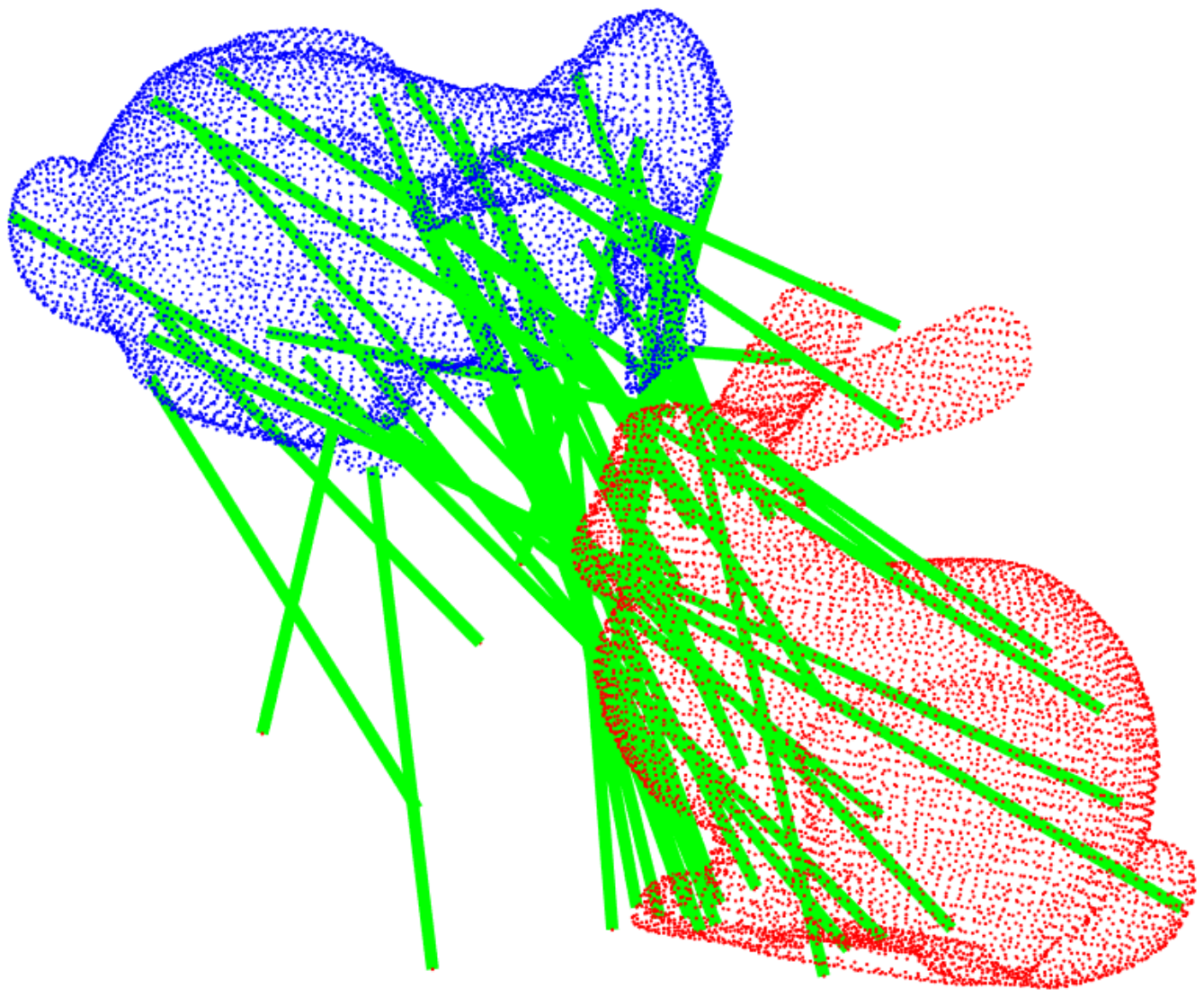} \\
			\end{minipage}
		& \myhspace
			\begin{minipage}{\mpwthree}%
			\centering
			Input images and \SURF correspondences \\
			(Green: inliers, Red: outliers) \\
			\vspace{2mm}
			\includegraphics[width=\columnwidth]{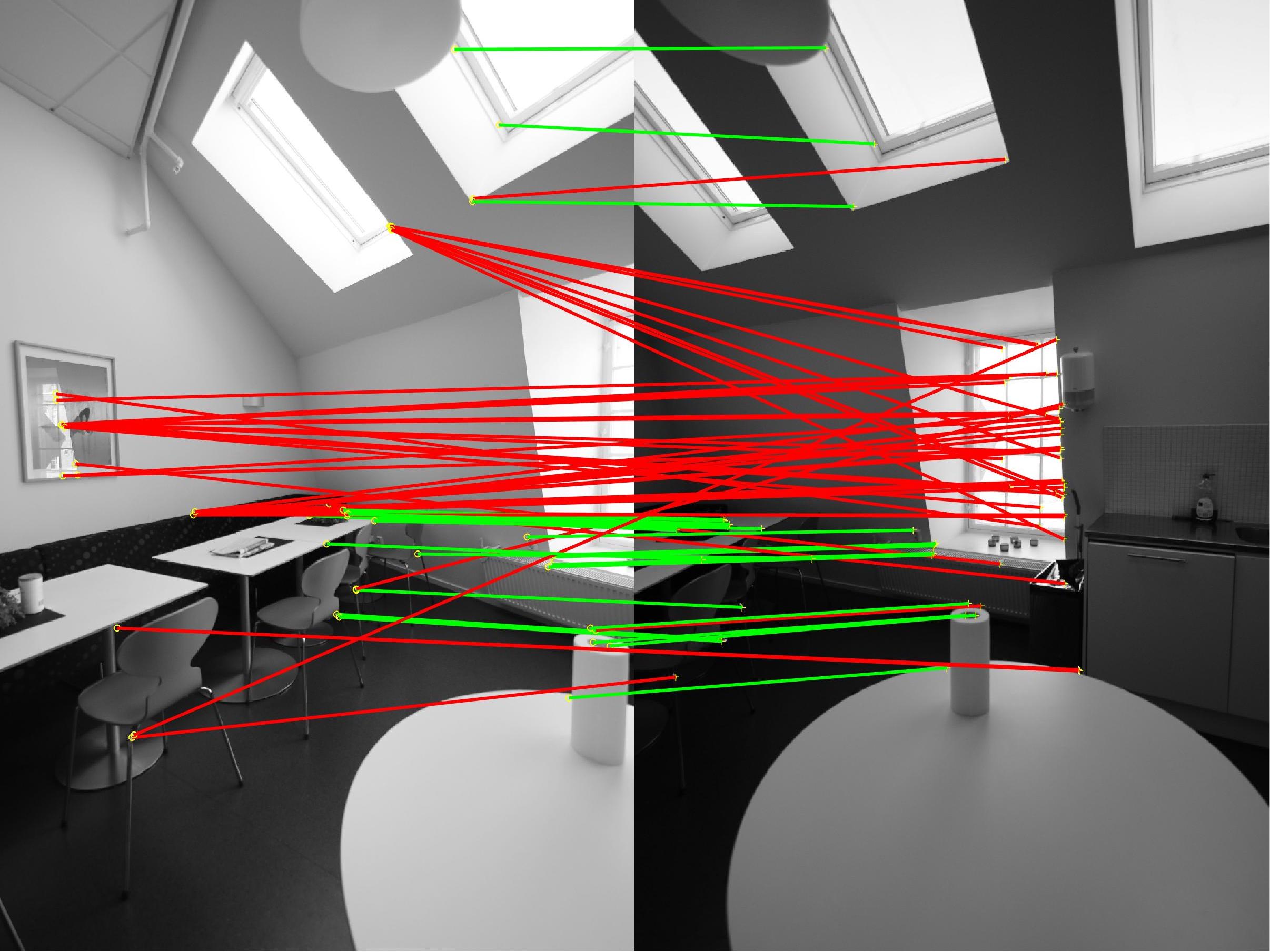} \\
			\end{minipage}  \\
		\begin{minipage}{\mpwthree}%
			\centering
			\includegraphics[width=\columnwidth]{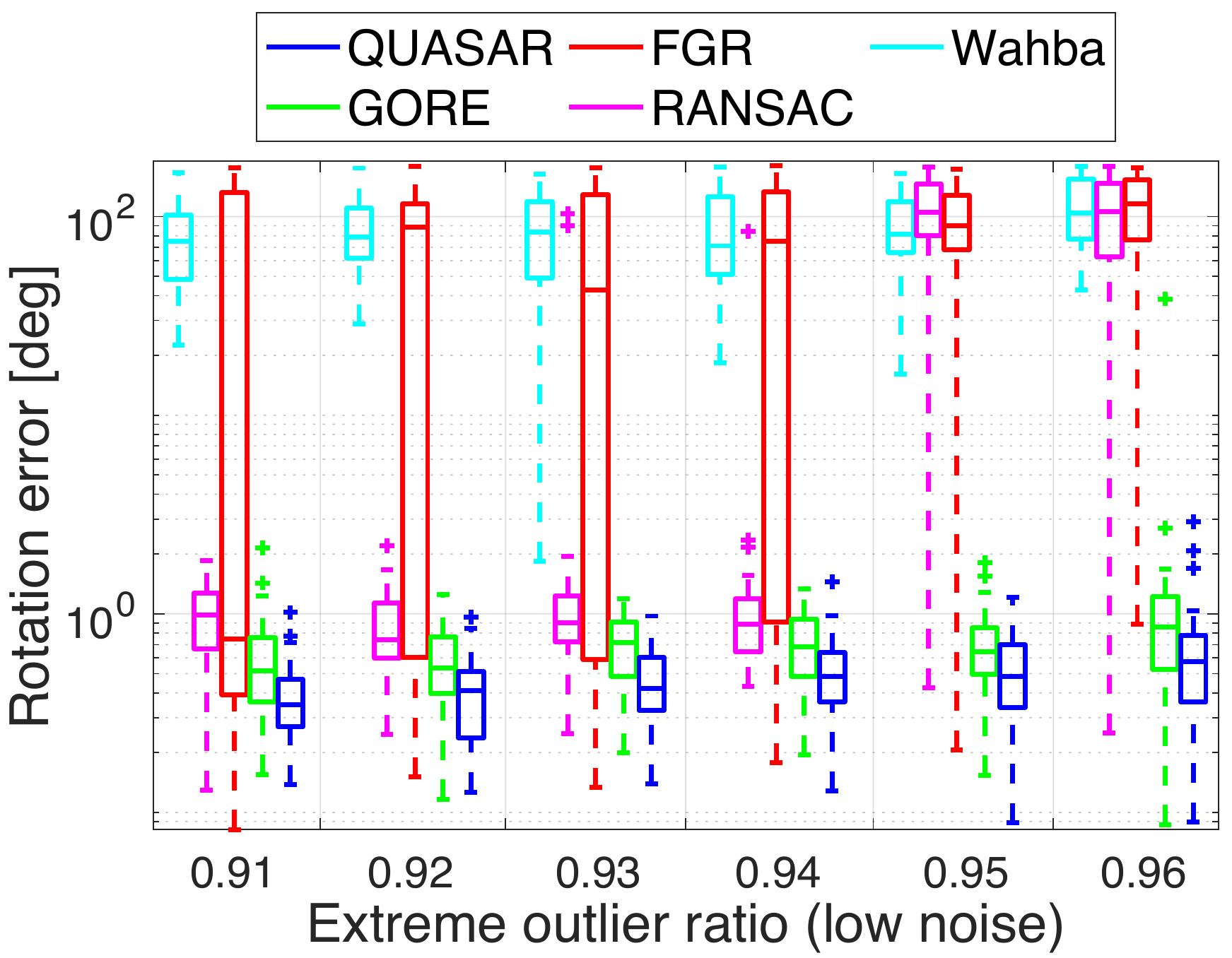} \\
			\end{minipage}
			\vspace{2mm}
		& \myhspace
			\begin{minipage}{\mpwthree}%
			\centering%
			\includegraphics[width=\columnwidth]{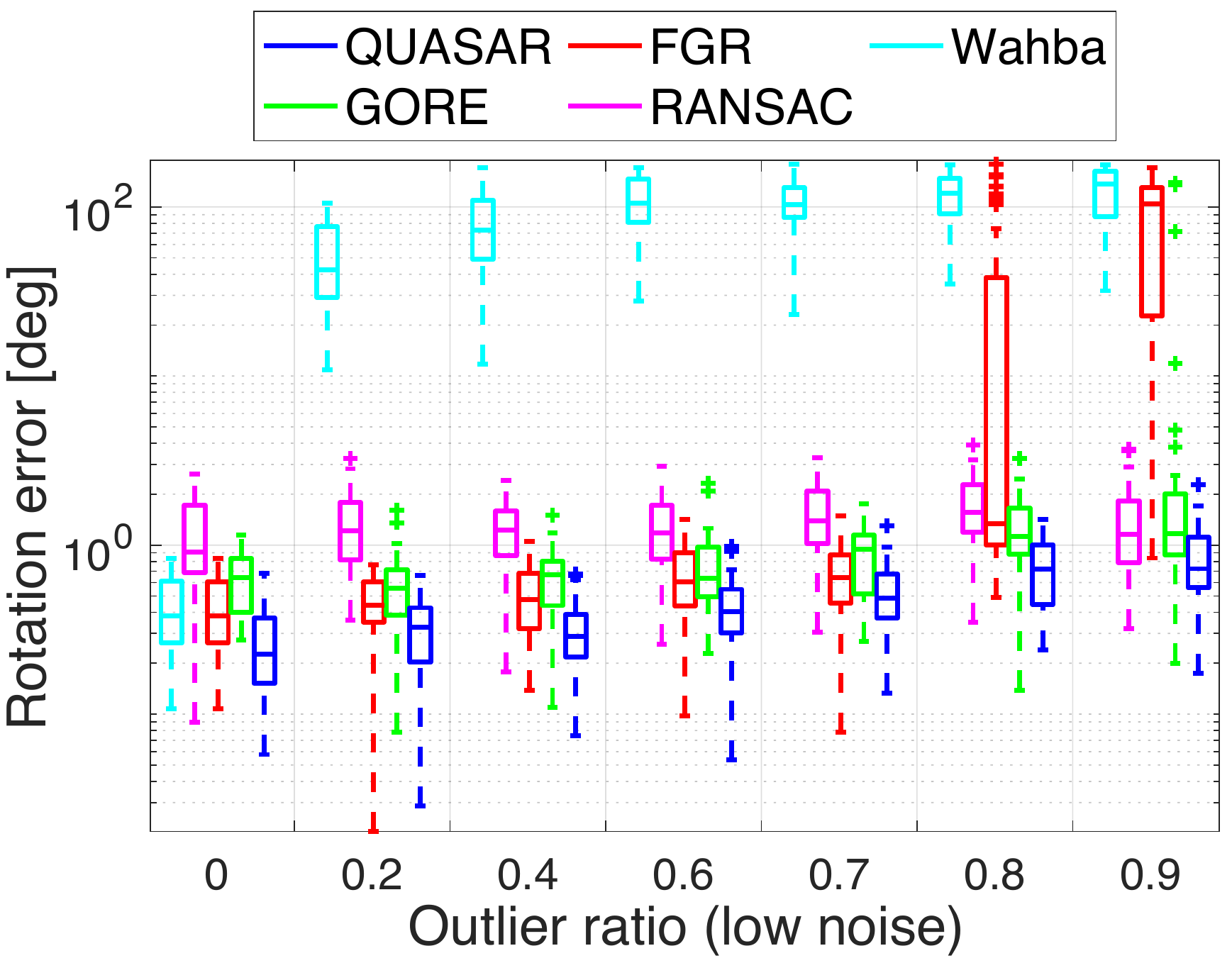} \\
			\end{minipage}
		& \multirow[h]{2}{*}{
		    \myhspace
			\begin{minipage}{\mpwthree}%
			\vspace{-16mm}
			\centering%
			\name stitched image \\
			\vspace{0mm}
			\includegraphics[width=\columnwidth]{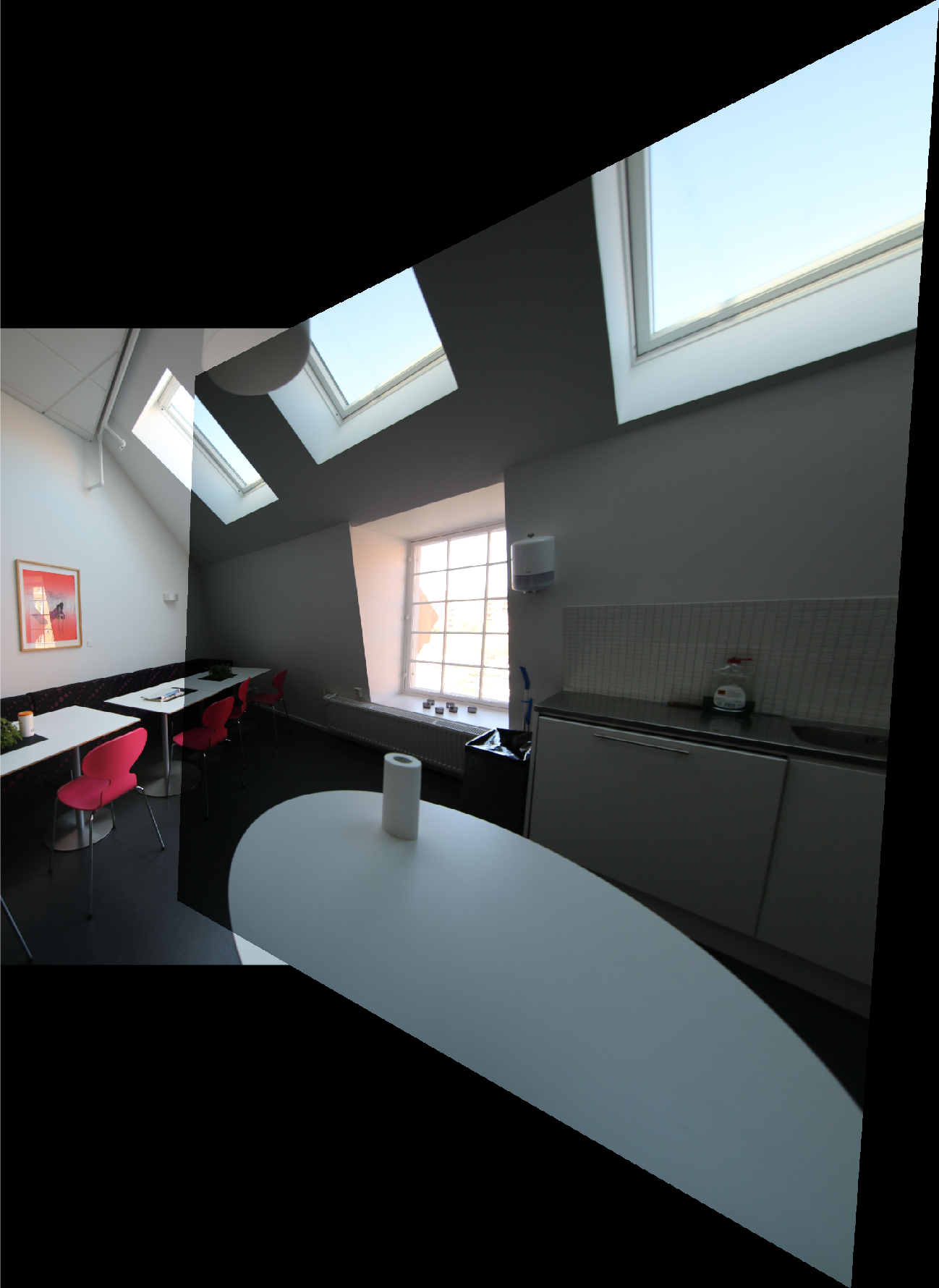} \\
			\vspace{3.0mm}
			(c) Image stitching
			\end{minipage} }\\
		\begin{minipage}{\mpwthree}%
			\centering%
			\includegraphics[width=\columnwidth]{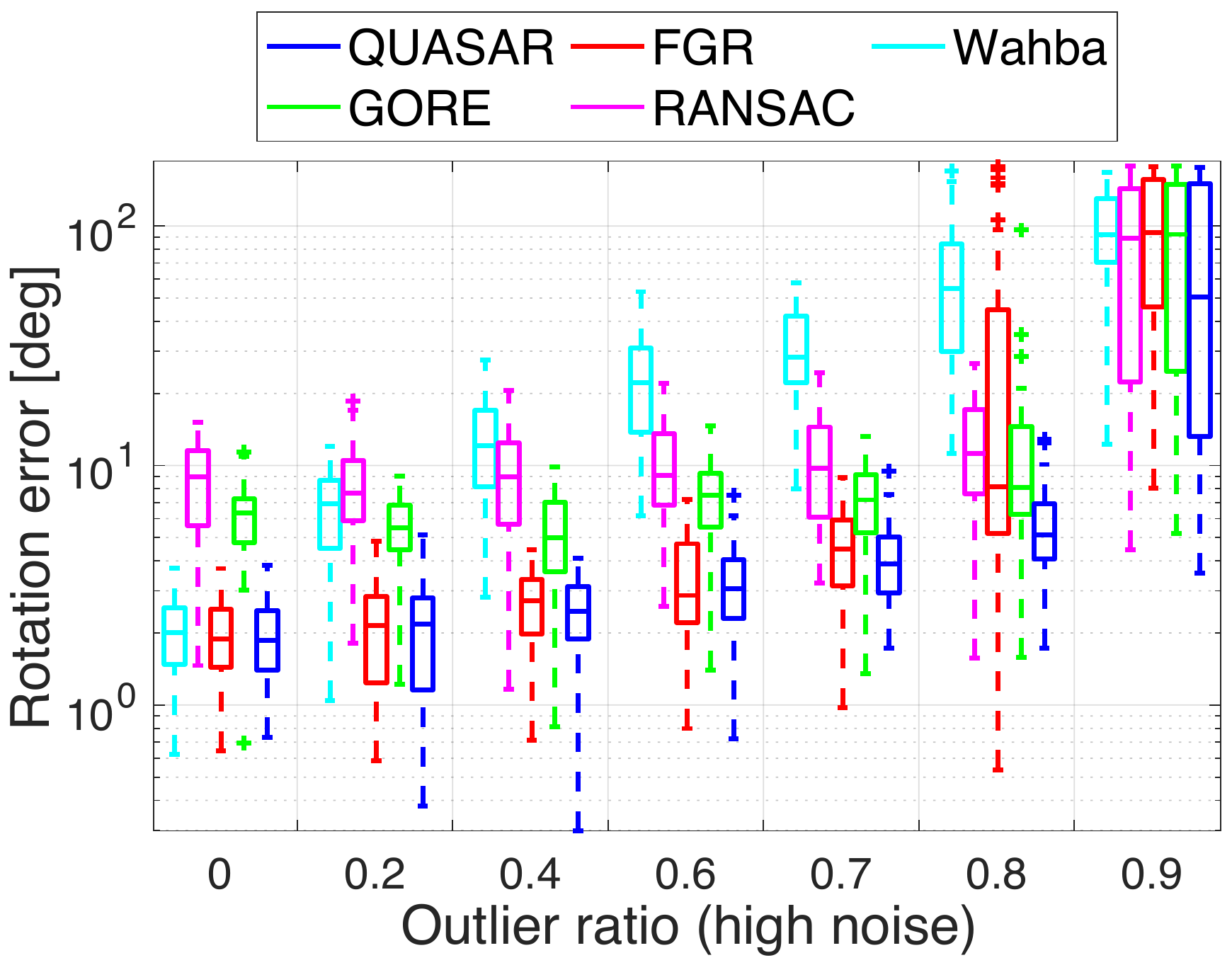} \\
			(a) Synthetic datasets
			\end{minipage}
		& \myhspace
			\begin{minipage}{\mpwthree}%
			\centering%
			\includegraphics[width=\columnwidth]{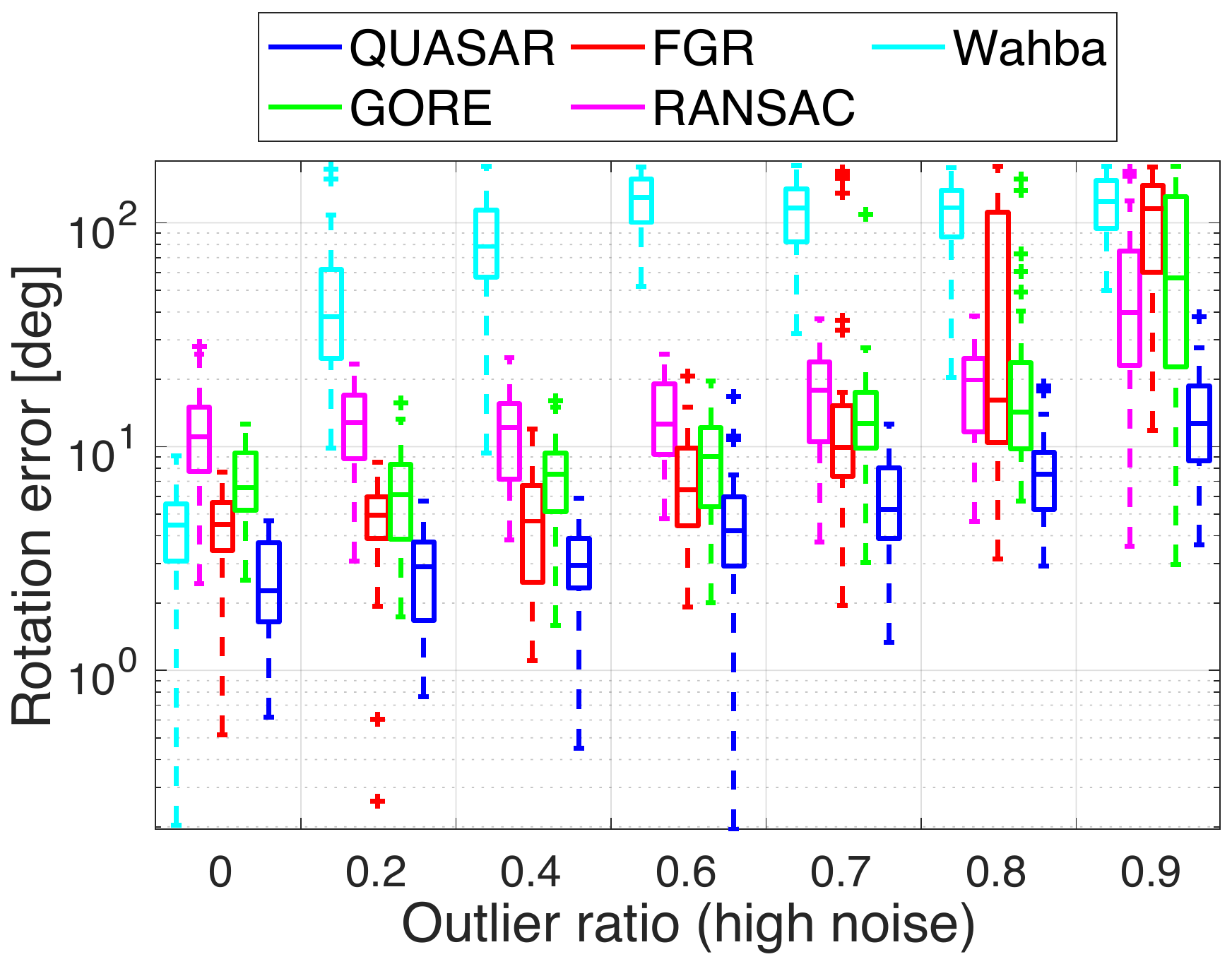} \\
			(b) Point cloud registration
			\end{minipage}
		& 

		\end{tabular}
	\end{minipage}
	\vspace{1mm} 
	\caption{(a) Rotation errors for increasing levels of outliers in synthetic datasets with low and high inlier noise.
	(b) Rotation errors on the \bunny dataset for point cloud registration.
	(c) Sample result of \name on the PASSTA image stitching dataset. 
	}
	 \label{fig:experiments_benchmark}
	\vspace{-6mm} 
	\end{center}
\end{figure*}

\myParagraph{Comparison against the State of the Art} 
Using the setup described in the previous section, we test the performance of \name in the presence of noise and outliers and 
 benchmark it against (i) the closed-form solution~\cite{horn1988josa-rotmatsol} to the standard Wahba problem~\eqref{eq:wahbaProblem} (label: \wahba); (ii) \ransac with $1000$ maximum iteration (label: \ransac); (iii) \emph{Fast Global Registration}~\cite{Zhou16eccv-fastGlobalRegistration} (label: \FGR); (iv) \emph{Guaranteed Outlier Removal}~\cite{Bustos2015iccv-gore3D} (label: \GORE). 
 We used default parameters for all approaches. \edit{We also benchmarked \name against \bnb methods~\cite{bazin2012accv-globalRotSearch,Bustos2015iccv-gore3D}, but we found \GORE had similar or better performance than \bnb. Therefore, the comparison with \bnb is presented in the \supp.}

Fig.~\ref{fig:experiments_benchmark}(a, first row) shows the box-plot of the rotation errors produced by the 
compared techniques for increasing ratios ($0-90\%$) of outliers when the inlier noise is low ($\sigma_i=0.01$, $i=1,\ldots,N$). 
As expected, \wahba only produces reasonable results in the outlier-free case. 
\FGR is robust against $70\%$ outliers but breaks at $90\%$ (many tests result in large errors even at $80\%$, see red ``\myplus'' in the figure).
\ransac, \GORE, and \name are robust to $90\%$ outliers, with \name being slight more accurate than the others.
In some of the tests with $90\%$ outliers, 
\ransac converged to poor solutions (red ``\myplus'' in the figure).


\begin{table*}
\centering
\begin{tabular}{ccccccccc}
  &  & &  \multicolumn{2}{c}{Relative relaxation gap} & \multicolumn{2}{c}{Solution rank} & \multicolumn{2}{c}{Solution stable rank} \\
  Datasets & Noise level & Outlier ratio & Mean & SD & Mean & SD & Mean & SD \\
  \hline
  \multirow{4}{*}{Synthetic} & Low & $0-0.9$ & $4.32\mathrm{e}{-9}$ & $3.89\mathrm{e}{-8}$ & $1$ & $0$ & $1+1.19\mathrm{e}{-16}$ & $1.20\mathrm{e}{-15}$ \\
   & Low & $0.91-0.96$ & $1.47\mathrm{e}{-8}$ & $2.10\mathrm{e}{-7}$  & 1 & 0 & $1+6.76\mathrm{e}{-9}$ & $1.05\mathrm{-7}$ \\
   & High & $0-0.8$ & $2.25\mathrm{e}{-8}$ & $3.49\mathrm{e}{-7}$ & 1 & 0 & $1+9.08\mathrm{e}{-8}$ & $1.41\mathrm{e}{-6}$ \\
   & High & $0.9$ & $0.03300$ & $0.05561$ & $33.33$ & $36.19$ & $1.1411$ & $0.2609$ \\
  \hline
  \multirow{2}{*}{\bunny} & Low & $0-0.9$ & $1.53\mathrm{e}{-8}$ & $1.64\mathrm{e}{-7}$ & 1 & 0 & $1+4.04\mathrm{-16}$ & $4.83\mathrm{e}{-15}$ \\
   & High & $0-0.9$ & $9.96\mathrm{e}{-12}$ & $4.06\mathrm{e}{-11}$ & 1 & 0 & $1+7.53\mathrm{e}{-18}$ & $3.85\mathrm{e}{-16}$ \\
  \hline
\end{tabular}
\vspace{2mm}
\caption{Mean and standard deviation (SD) of the relative relaxation gap, the solution rank, and the solution stable rank of \name on the synthetic and the \bunny datasets. The relaxation is \emph{tight} in all tests except in the synthetic tests with high noise and 90\% outliers.} 
\label{tab:dualityGapandRank}
\vspace{-4mm}
\end{table*}

Fig.~\ref{fig:experiments_benchmark}(a, second row) shows the rotation errors for
low noise ($\sigma_i=0.01$, $i=1,\ldots,N$) and extreme outlier ratios ($91-96\%$). 
Here we use $N=100$ to ensure a sufficient number of inliers.
Not surprisingly, \wahba, \FGR, and \ransac break at such extreme levels of outliers. \GORE fails only once at $96\%$ outliers (red ``\myplus'' in the figure), 
while \name returns highly-accurate solutions in all tests. 

Fig.~\ref{fig:experiments_benchmark}(a, third row) shows the rotation errors
for a higher noise level ($\sigma_i=0.1$, $i=1,\ldots,N$) and increasing outlier ratios ($0-90\%$).
Even with large noise, \name is still robust against $80\%$ outliers, 
an outlier level where all the other algorithms fail to produce an accurate solution. 
\vspace{-1mm}
\subsection{Point Cloud Registration}
\vspace{-1mm}
\edit{In point cloud registration,~\cite{Yang2019rss-TEASER} showed how to build invariant measurements to decouple the estimation of scale, rotation and translation. Therefore, in this section we test \name to solve the rotation-only subproblem.} We use the \bunny dataset from the Stanford 3D Scanning Repository \cite{Curless96siggraph} and resize the corresponding point cloud to be within the $[0, 1]^3$ cube. The \bunny is first down-sampled to $N = 40$ points, and then we apply a random rotation with additive noise and random outliers, according to eq.~\eqref{eq:generativeModel}. Results are averaged over 40 Monte Carlo runs.
Fig.~\ref{fig:experiments_benchmark}(b, first row) shows an example of the point cloud registration problem with vector-to-vector putative correspondences (including outliers) shown in green. 
Fig.~\ref{fig:experiments_benchmark}(b, second row) and Fig.~\ref{fig:experiments_benchmark}(b, third row)
evaluate the compared techniques for increasing outliers in the 
low noise ($\sigma_i=0.01$) and the high noise regime ($\sigma_i=0.1$), respectively. The results confirm our findings from Fig.~\ref{fig:experiments_benchmark}(a, first row) and Fig.~\ref{fig:experiments_benchmark}(a, third row).
\name dominates the other techniques. Interestingly, in this dataset \name performs even better (more accurate results) 
at $90\%$ outliers and high noise.

\myParagraph{Tightness of \name} Table~\ref{tab:dualityGapandRank} provides a detailed evaluation of the quality of
\edit{\name} in both the synthetic and the \bunny datasets. Tightness is evaluated in terms of (i) the \emph{relative relaxation gap}, defined as $\frac{f_\text{QCQP} - f^\star_\text{SDP}}{f_\text{QCQP}}$, where 
$f^\star_\text{SDP}$ is the optimal cost of the relaxation~\eqref{eq:SDPrelax} 
and $f_\text{QCQP}$ is the cost attained in~\eqref{eq:TLSBinaryClone} by the corresponding (possibly rounded) solution  
(the relaxation is exact when the gap is zero), (ii) the rank of the optimal solution of~\eqref{eq:SDPrelax} (the relaxation is exact when the rank is one).
Since the evaluation of the rank requires setting a numeric tolerance ($10^{-6}$ in our tests), we also report the \emph{stable rank},
 the squared ratio between the Frobenius norm and the spectral norm, which \edit{is less sensitive to the choice of the numeric tolerance.}

\vspace{-2mm}
\subsection{Image Stitching}
We use the PASSTA dataset to test \name in challenging panorama stitching applications~\cite{meneghetti2015scia-PASSATImageStitchingData}. 
To merge two images together, we first use \SURF~\cite{Bay06eccv} feature descriptors to match and establish putative feature correspondences. Fig.~\ref{fig:experiments_benchmark}(c, first row) shows the established correspondences between two input images from the \emph{Lunch Room} dataset. Due to significantly different lighting conditions, small overlapping area, and different objects having similar image features, 46 of the established 70 \SURF correspondences ($66\%$) are outliers (shown in red). 
From the \SURF feature points, we apply the inverse of the known camera intrinsic matrix $\MK$ to obtain unit-norm bearing vectors ($\{\va_i,\vb_i\}_{i=1}^{70}$) observed in each camera frame. 
Then we use \name (with $\sigma^2\barcsq=0.001$) to find the relative rotation $\MR$ between the two camera frames. 
Using the estimated rotation, we compute the homography matrix as $\MH=\MK\MR\MK\inv$ to stitch the pair of images together. 
Fig.~\ref{fig:experiments_benchmark}(c) shows an example of the image stitching results.
 \name performs accurate stitching in very challenging instances with many outliers and small image overlap. 
 On the other hand, applying the \emph{M-estimator SAmple Consensus} (MSAC)~\cite{Torr00cviu,Hartley04book} algorithm, as implemented by the Matlab ``\scenario{estimateGeometricTransform}'' function, results in an incorrect stitching (see the \supp for extra results and statistics).


\vspace{-2mm}
\section{Conclusions}
\label{sec:conclusions}
\vspace{-1mm}
We propose the first polynomial-time certifiably optimal solution to the Wahba problem with outliers.
The crux of the approach is the use of a \edit{\TLS}
cost function that makes the estimation 
insensitive to a large number of outliers. 
The second key ingredient is to write the \TLS problem as a \edit{\QCQP}
using a quaternion representation for the unknown rotation. 
Despite the simplicity of the \QCQP formulation, the problem remains non-convex and hard to solve globally.
Therefore, we develop a convex SDP relaxation. 
While a naive relaxation of the \QCQP 
is loose in the presence of outliers, we propose an improved relaxation with redundant constraints, named \edit{\name}.
 We provide a theoretical proof that \name is tight (computes an exact solution to the \QCQP) 
 in the noiseless and outlier-free case. 
 More importantly, we empirically show that the relaxation remains tight in the face of \edit{high noise and extreme outliers (\maxOutliers)}. 
Experiments on synthetic and real datasets 
for point cloud registration and image stitching 
show that \name outperforms \ransac, as well as state-of-the-art robust local optimization techniques, \edit{global outlier-removal procedures, and \bnb optimization methods}. \edit{While running in polynomial time, the general-purpose SDP solver used in our current implementation scales poorly in the problem size (about 1200 seconds with \mosek~\cite{mosek} and 500 seconds with \sdpnalplus~\cite{Yang2015mpc-sdpnalplus} for $100$ correspondences). Current research effort is devoted to developing fast specialized SDP solvers along the line of~\cite{Rosen18ijrr-sesync}.}


\vspace{-2mm}
\section*{Acknowledgements}
\label{sec:acknowledgements}
\vspace{-2mm}
\edit{This work was partially funded by ARL DCIST CRA W911NF-17-2-0181, ONR RAIDER
N00014-18-1-2828, and Lincoln Laboratory’s Resilient Perception in Degraded Environments program. The authors want to thank \'Alvaro Parra Bustos and Tat-Jun Chin for kindly sharing \bnb implementations used in~\cite{bazin2012accv-globalRotSearch,Bustos2015iccv-gore3D}.}


\renewcommand{\theequation}{A\arabic{equation}}
\renewcommand{\thetheorem}{A\arabic{theorem}}
\renewcommand{\thefigure}{A\arabic{figure}}
\renewcommand{\thetable}{A\arabic{table}}
\appendix
\large
\begin{center}
{\bf Supplementary Material }
\end{center}
\normalsize

\section{Proof of Proposition~\ref{prop:binaryCloning}}
\label{sec:proof:prop:binaryCloning}
\begin{proof}
Here we prove the equivalence between the mixed-integer program~\eqref{eq:TLSadditiveForm} and the optimization in~\eqref{eq:TLSadditiveForm2} involving $N+1$ quaternions. To do so, 
we note that since $\theta_i \in \{+1,-1\}$ and $\frac{1+\theta_i}{2} \in \{0,1\}$, we can safely move $\frac{1+\theta_i}{2}$ inside the squared norm (because $0=0^2,1=1^2$) in each summand of the cost function~\eqref{eq:TLSadditiveForm}:
\bea \label{eq:moveInsideNorm}
& \sumAllPointsi \frac{1+\theta_i}{2}\frac{\Vert \hatvb_i - \vq \kron \hatva_i \kron \vq\inv \Vert^2}{\sigma_i^2} + \frac{1-\theta_i}{2} \barcsq 
 \hspace{-5mm}\\
& \hspace{-9mm} = \sumAllPointsi \frac{\Vert \hatvb_i - \vq \kron \hatva_i \kron \vq\inv + \theta_i\hatvb_i - \vq \kron \hatva_i \kron (\theta_i \vq\inv) \Vert^2}{4\sigma_i^2} + \frac{1-\theta_i}{2}\barcsq  \nonumber\hspace{-5mm}  
\eea
Now we introduce $N$ new unit quaternions $\vq_i = \theta_i \vq$, $i=1,\dots,N$ by multiplying $\vq$ by the $N$ binary variables $\theta_i \in \{+1,-1\}$, a re-parametrization we called \emph{binary cloning}. One can easily verify that $\vq\tran \vq_i = \theta_i (\vq\tran \vq) = \theta_i$. Hence, by substituting $\theta_i=\vq\tran \vq_i$ into~\eqref{eq:moveInsideNorm}, we can rewrite the mixed-integer program~\eqref{eq:TLSadditiveForm} as:
\bea
& \min\limits_{\substack{\vq \in S^3 \\ \vq_i \in \{\pm \vq\}}} 
\sumAllPointsi \frac{\Vert \hatvb_i - \vq \kron \hatva_i \kron \vq\inv + \vq\tran \vq_i \hatvb_i - \vq \kron \hatva_i \kron \vq_i\inv \Vert^2 }{4\sigma_i^2} \nonumber \\
& + \frac{1-\vq\tran\vq_i}{2}\barcsq,
\eea
which is the same as the optimization in~\eqref{eq:TLSadditiveForm2}.
\end{proof}

\section{Proof of Proposition~\ref{prop:qcqp}}
\label{sec:proof:prop:qcqp}
\begin{proof}
Here we show that the optimization involving $N+1$ quaternions in~\eqref{eq:TLSadditiveForm2} can be reformulated as the Quadratically-Constrained Quadratic Program (\QCQP) in~\eqref{eq:TLSBinaryClone}. Towards this goal, 
we prove that the objective function and the constraints in the \QCQP are a 
re-parametrization of the ones in~\eqref{eq:TLSadditiveForm2}.

{\bf Equivalence of the objective functions.}
We start by developing the squared 2-norm term in~\eqref{eq:TLSadditiveForm2}:
\bea
& \Vert \hatvb_i - \vq \kron \hatva_i \kron \vq\inv + \vq\tran \vq_i \hatvb_i - \vq \kron \hatva_i \kron \vq_i\inv \Vert^2 \nonumber \\
& \expl{$\Vert \vq\tran \vq_i \hatvb_i \Vert^2 = \Vert \hatvb_i \Vert^2 = \Vert \vb_i \Vert^2$, $\hatvb_i\tran(\vq\tran\vq_i)\hatvb_i = \vq\tran\vq_i\Vert \vb_i \Vert^2$}
& \expl{$\Vert \vq \kron \hatva_i \kron \vq\inv \Vert^2 = \Vert \MR \va_i\Vert^2 = \Vert \va_i \Vert^2$}
& \expl{$\Vert \vq \kron \hatva_i \kron \vq_i\inv \Vert^2 = \Vert \theta_i \MR \va_i\Vert^2 = \Vert \va_i \Vert^2$}
& \expl{${\scriptstyle (\vq \kron \hatva_i \kron \vq\inv)\tran (\vq \kron \hatva_i \kron \vq_i\inv) = (\MR \va_i)\tran(\theta_i \MR \va_i) = \vq\tran\vq_i\Vert \va_i \Vert^2}$}
& = 2\Vert \vb_i \Vert^2 + 2\Vert \va_i \Vert^2 + 2\vq\tran \vq_i \Vert \vb_i \Vert^2 + 2\vq\tran \vq_i \Vert \va_i \Vert^2 \nonumber \\
& - 2\hatvb_i\tran (\vq\kron \hatva_i \kron \vq\inv) -2\hatvb_i\tran (\vq\kron\hatva_i\kron\vq_i\inv) \nonumber \\
& \hspace{-8mm}- 2\vq\tran\vq_i\hatvb_i\tran(\vq\kron \hatva_i \kron \vq\inv) -2\vq\tran\vq_i\hatvb_i\tran(\vq\kron\hatva_i\kron\vq_i\inv) \\
& \expl{${\scriptstyle \vq\tran\vq_i\hatvb_i\tran(\vq\kron\hatva_i\kron\vq_i\inv)=(\theta_i)^2\hatvb_i\tran(\vq\kron\hatva_i\kron\vq\inv)= \hatvb_i\tran(\vq\kron\hatva_i\kron\vq\inv)}$ }
& \expl{ $\hatvb_i\tran(\vq\kron\hatva_i\kron\vq_i\inv) = \vq\tran \vq_i \hatvb_i\tran(\vq\kron\hatva_i\kron\vq\inv) $ }
& = 2\Vert \vb_i \Vert^2 + 2\Vert \va_i \Vert^2 + 2\vq\tran \vq_i \Vert \vb_i \Vert^2 + 2\vq\tran \vq_i \Vert \va_i \Vert^2 \nonumber \\
& - 4\hatvb_i\tran (\vq\kron \hatva_i \kron \vq\inv) - 4\vq\tran\vq_i\hatvb_i\tran (\vq\kron \hatva_i \kron \vq\inv) \label{eq:afterDevelopSquares}
\eea
where we have used multiple times the binary cloning equalities $\vq_i = \theta_i \vq, \theta_i = \vq\tran \vq_i$, the equivalence between applying rotation to a homogeneous vector $\hatva_i$ using quaternion product and using rotation matrix in eq.~\eqref{eq:q_pointRot} from the main document, as well as the fact that vector 2-norm is invariant to rotation and homogenization (with zero padding). 

Before moving to the next step, we make the following observation by combing eq.~\eqref{eq:Omega_1} and eq.~\eqref{eq:q_inverse}:
\bea
\MOmega_1(\vq\inv) = \MOmega_1\tran(\vq),\quad \MOmega_2(\vq\inv) = \MOmega_2\tran(\vq)
\eea
which states the linear operators $\MOmega_1(\cdot)$ and $\MOmega_2(\cdot)$ of $\vq$ and its inverse $\vq\inv$ are related by a simple transpose operation.
In the next step, we use the equivalence between quaternion product and linear operators in $\MOmega_1(\vq)$ and $\MOmega_2(\vq)$ as defined in eq.~\eqref{eq:quatProduct}-\eqref{eq:Omega_1} to simplify $\hatvb_i\tran(\vq\kron\hatva_i \kron \vq\inv)$ in eq.~\eqref{eq:afterDevelopSquares}:
\bea
& \hatvb_i\tran(\vq\kron\hatva_i \kron \vq\inv) \nonumber \\
& \expl{${\scriptstyle \vq\kron\hatva_i = \MOmega_1(\vq)\hatva_i \;,\; \MOmega_1(\vq)\hatva_i \kron \vq\inv = \MOmega_2(\vq\inv)\MOmega_1(\vq)\hatva_i = \MOmega_2\tran(\vq)\MOmega_1(\vq)\hatva_i}$} 
& = \hatvb_i\tran (\MOmega_2\tran(\vq)\MOmega_1(\vq) \hatva_i) \\
& \hspace{-8mm} \expl{$\MOmega_2(\vq)\hatvb_i = \hatvb_i \kron \vq = \MOmega_1(\hatvb_i) \vq$ \;,\; $\MOmega_1(\vq)\hatva_i = \vq \kron \hatva_i = \MOmega_2(\hatva_i)\vq$}
& = \vq\tran \MOmega_1\tran(\hatvb_i) \MOmega_2(\hatva_i) \vq. \label{eq:developQuatProduct}
\eea
Now we can insert eq.~\eqref{eq:developQuatProduct} back to eq.~\eqref{eq:afterDevelopSquares} and write:
\bea
& \Vert \hatvb_i - \vq \kron \hatva_i \kron \vq\inv + \vq\tran \vq_i \hatvb_i - \vq \kron \hatva_i \kron \vq_i\inv \Vert^2 \nonumber \\
& = 2\Vert \vb_i \Vert^2 + 2\Vert \va_i \Vert^2 + 2\vq\tran \vq_i \Vert \vb_i \Vert^2 + 2\vq\tran \vq_i \Vert \va_i \Vert^2 \nonumber \\
& \hspace{-5mm} - 4\hatvb_i\tran (\vq\kron \hatva_i \kron \vq\inv) - 4\vq\tran\vq_i\hatvb_i\tran (\vq\kron \hatva_i \kron \vq\inv) \\
& = 2\Vert \vb_i \Vert^2 + 2\Vert \va_i \Vert^2 + 2\vq\tran \vq_i \Vert \vb_i \Vert^2 + 2\vq\tran \vq_i \Vert \va_i \Vert^2 \nonumber \\
& \hspace{-5mm} - 4\vq\tran\MOmega_1\tran(\hatvb_i)\MOmega_2(\hatva_i)\vq - 4\vq\tran\vq_i \vq\tran\MOmega_1\tran(\hatvb_i)\MOmega_2(\hatva_i)\vq \\
& \expl{${\scriptstyle \vq\tran\vq_i \vq\tran\MOmega_1\tran(\hatvb_i)\MOmega_2(\hatva_i)\vq=\theta_i \vq\tran\MOmega_1\tran(\hatvb_i)\MOmega_2(\hatva_i)\vq = \vq\tran\MOmega_1\tran(\hatvb_i)\MOmega_2(\hatva_i)\vq_i}$}
& = 2\Vert \vb_i \Vert^2 + 2\Vert \va_i \Vert^2 + 2\vq\tran \vq_i \Vert \vb_i \Vert^2 + 2\vq\tran \vq_i \Vert \va_i \Vert^2 \nonumber \\
& \hspace{-5mm} - 4\vq\tran\MOmega_1\tran(\hatvb_i)\MOmega_2(\hatva_i)\vq - 4 \vq\tran\MOmega_1\tran(\hatvb_i)\MOmega_2(\hatva_i)\vq_i \\
& \expl{$-\MOmega_1\tran(\hatvb_i) = \MOmega_1(\hatvb_i)$} 
& = 2\Vert \vb_i \Vert^2 + 2\Vert \va_i \Vert^2 + 2\vq\tran \vq_i \Vert \vb_i \Vert^2 + 2\vq\tran \vq_i \Vert \va_i \Vert^2 \nonumber \\
& \hspace{-5mm} + 4\vq\tran\MOmega_1(\hatvb_i)\MOmega_2(\hatva_i)\vq + 4 \vq\tran\MOmega_1(\hatvb_i)\MOmega_2(\hatva_i)\vq_i, \label{eq:finishDevelopSquareNorm}
\eea
which is quadratic in $\vq$ and $\vq_i$. Substituting eq.~\eqref{eq:finishDevelopSquareNorm} back to~\eqref{eq:TLSadditiveForm2}, we can write the cost function as:
\bea \label{eq:costInQuadraticFormBeforeLifting}
& \sum\limits_{i=1}^N \frac{\Vert \hatvb_i - \vq \kron \hatva_i \kron \vq\inv + \vq\tran \vq_i \hatvb_i - \vq \kron \hatva_i \kron \vq_i\inv \Vert^2 }{4\sigma_i^2} + \frac{1-\vq\tran\vq_i}{2}\barcsq \nonumber \hspace{-5mm}\\
& \hspace{-10mm}= \sum\limits_{i=1}^N \vq_i\tran \left( \underbrace{ \frac{(\Vert \vb_i \Vert^2+ \Vert \va_i \Vert^2)\eye_4 + 2\MOmega_1(\hatvb_i)\MOmega_2(\hatva_i)}{2\sigma_i^2} + \frac{\barcsq}{2}\eye_4 }_{:=\MQ_{ii}}\right) \vq_i \nonumber\hspace{-10mm} \\
& \hspace{-15mm} + 2\vq\tran \left( \underbrace{ \frac{(\Vert \vb_i \Vert^2+ \Vert \va_i \Vert^2)\eye_4 + 2\MOmega_1(\hatvb_i)\MOmega_2(\hatva_i)}{4\sigma_i^2}  - \frac{\barcsq}{4}\eye_4 }_{:=\MQ_{0i}} \right) \vq_i, \nonumber\hspace{-15mm}\\
\eea
where we have used two facts: (i) $\vq\tran \MA \vq = \theta_i^2 \vq\tran \MA \vq = \vq_i\tran \MA \vq_i$ for any matrix $\MA\in\Real{4 \times 4}$, (ii) $c=c\vq\tran \vq = \vq\tran (c\eye_4) \vq$ for any real constant $c$, which allowed writing the quadratic forms of $\vq$ and constant terms in the cost as quadratic forms of $\vq_i$. Since we have not changed the decision variables $\vq$ and $\{\vq_i\}_{i=1}^N$, the optimization in~\eqref{eq:TLSadditiveForm2} is therefore equivalent to the following optimization:
\bea \label{eq:optQCQPbeforeLifting}
\min_{\substack{\vq \in S^3 \\ \vq_i \in \{\pm \vq\}}} \sumAllPointsi \vq_i\tran \MQ_{ii} \vq_i + 2\vq\tran \MQ_{0i} \vq_i
\eea
where $\MQ_{ii}$ and $\MQ_{0i}$ are the known $4\times 4$ data matrices as defined in eq.~\eqref{eq:costInQuadraticFormBeforeLifting}.

Now it remains to prove that the above optimization~\eqref{eq:optQCQPbeforeLifting} is equivalent to the \QCQP in~\eqref{eq:TLSBinaryClone}. Recall that $\vxx$ is the column vector stacking all the $N+1$ quaternions, \ie, $\vxx=[\vq\tran\ \vq_1\tran\ \dots\ \vq_N\tran]\tran \in \Real{4(N+1)}$. 
Let us introduce 
symmetric matrices $\MQ_i \in \Real{4(N+1) \times 4(N+1) },i=1,\dots,N$ and let the $4 \times 4$ sub-block of $\MQ_i$ corresponding to sub-vector $\vu$ and $\vv$, be denoted as $[\MQ_i]_{uv}$; each $\MQ_i$ is defined as:
\bea \label{eq:Qi}
[\MQ_i]_{uv} = \begin{cases}
\MQ_{ii} & \text{if } \vu=\vq_i \text{ and }\vv=\vq_i \\
\MQ_{0i} & \substack{ \text{if } \vu=\vq \text{ and }\vv=\vq_i \\ \text{or } \vu=\vq_i \text{ and }\vv=\vq} \\
\zero_{4\times 4} & \text{otherwise}
\end{cases}
\eea
\ie, $\MQ_i$ has the diagonal $4\times 4$ sub-block corresponding to $(\vq_i,\vq_i)$ be $\MQ_{ii}$, has the two off-diagonal $4\times 4$ sub-blocks corresponding to $(\vq,\vq_i)$ and $(\vq_i,\vq)$ be $\MQ_{0i}$, and has all the other $4\times 4$ sub-blocks be zero. Then we can write the cost function in eq.~\eqref{eq:optQCQPbeforeLifting} compactly using $\vxx$ and $\MQ_i$:
\bea
& \displaystyle \sumAllPointsi \vq_i\tran \MQ_{ii} \vq_i + 2\vq\tran \MQ_{0i} \vq_i = \sumAllPointsi \vxx\tran \MQ_i \vxx
\eea
Therefore, we proved that the objective functions in~\eqref{eq:TLSadditiveForm2} and the \QCQP~\eqref{eq:TLSBinaryClone} are the same.

{\bf Equivalence of the constraints.}
  We are only left to prove that~\eqref{eq:TLSadditiveForm2} and~\eqref{eq:TLSBinaryClone} have the same feasible set, 
\ie, the following two sets of constraints are equivalent:
\bea \label{eq:equivalentConstraints}
\begin{cases}
\vq \in \calS^3 \\
\vq_i \in \{\pm \vq\},\\
i=1,\dots,N
\end{cases} \Leftrightarrow
\begin{cases}
\vxx_q\tran \vxx_q = 1 \\
\vxx_{q_i}\vxx_{q_i}\tran = \vxx_q \vxx_q\tran,\\
i=1,\dots,N
\end{cases}
\eea
We first prove the ($\Rightarrow$) direction. Since $\vq \in \calS^3$, it is obvious that $\vxx_q\tran \vxx_q = \vq\tran \vq = 1$. In addition, since $\vq_i \in \{+\vq,-\vq\}$, it follows that $\vxx_{q_i} \vxx_{q_i}\tran = \vq_i \vq_i\tran = \vq\vq\tran = \vxx_q \vxx_q\tran$. Then we proof the reverse direction ($\Leftarrow$). Since $\vxx_q\tran \vxx_q = \vq\tran \vq$, so $\vxx_q\tran \vxx_q = 1$ implies $\vq\tran \vq =1$ and therefore $\vq \in \calS^3$. On the other hand, $\vxx_{q_i} \vxx_{q_i}\tran = \vxx_q \vxx_q\tran$ means $\vq_i \vq_i\tran = \vq\vq\tran$. If we write $\vq_i = [q_{i1},q_{i2},q_{13},q_{i4}]\tran$ and $\vq = [q_1,q_2,q_3,q_4]$, then the following matrix equality holds:
\smaller
\bea
\hspace{-6mm}\bmat{cccc}
q_{i1}^2 & q_{i1}q_{i2} & q_{i1}q_{i3} & q_{i1}q_{i4} \\
\star & q_{i2}^2 & q_{i2}q_{i3} & q_{i2}q_{i4} \\
\star & \star & q_{i3}^2 & q_{i3}q_{i4} \\
\star & \star & \star & q_{i4}^2
\emat = 
\bmat{cccc}
q_1^2 & q_1q_2 & q_1q_3 & q_1q_4 \\
\star & q_2^2 & q_2q_3 & q_2q_4 \\
\star & \star & q_3^2 & q_3q_4 \\
\star & \star & \star & q_4^2
\emat
\nonumber\hspace{-10mm}\\
\eea
\normalsize
First, from the diagonal equalities, we can get $q_{ij}=\theta_j q_j, \theta_j\in\{+1,-1\}, j=1,2,3,4$. Then we look at the off-diagonal equality: $q_{ij}q_{ik}=q_jq_k, j\neq k$, since $q_{ij}=\theta_j q_j$ and $q_{ik} = \theta_k q_k$, we have $q_{ij}q_{ik} = \theta_j \theta_k q_j q_k$, from which we can have $\theta_j \theta_k =1, \forall j \neq k$. This implies that all the binary values $\{\theta_j\}_{j=1}^4$ have the same sign, and therefore they are equal to each other. As a result, $\vq_i = \theta_i \vq = \{+\vq, -\vq\}$, showing the two sets of constraints in eq.~\eqref{eq:equivalentConstraints} are indeed equivalent. Therefore, the \QCQP in eq.~\eqref{eq:TLSBinaryClone} is equivalent to the optimization in~\eqref{eq:optQCQPbeforeLifting}, and the original optimization in~\eqref{eq:TLSadditiveForm2} that involves $N+1$ quaternions, concluding the proof.
\end{proof}

\section{Proof of Proposition~\ref{prop:qcqpZ}}
\label{sec:proof:prop:matrixBinaryClone}
\begin{proof}
Here we show that the non-convex \QCQP written in terms of the vector $\vxx$ in Proposition~\ref{prop:qcqp} (and eq.~\eqref{eq:TLSBinaryClone}) is equivalent to the non-convex problem written using the matrix $\MZ$ in Proposition~\ref{prop:qcqpZ} (and eq.~\eqref{eq:qcqpZ}). We do so by showing that the objective function and the constraints in~\eqref{eq:qcqpZ} are a re-parametrization of the ones in~\eqref{eq:TLSBinaryClone}.

\myParagraph{Equivalence of the objective function} 
Since $\MZ = \vxx \vxx\tran$, and denoting $\MQ \doteq \sumAllPointsi \MQ_i$, we can rewrite the cost function in~\eqref{eq:qcqpZ} as:
\bea
& \sumAllPointsi \vxx\tran\MQ_i\vxx = \vxx\tran \left( \sumAllPointsi \MQ_i \right)\vxx = \vxx\tran \MQ \vxx  \nonumber \\
& = \trace{\MQ \vxx \vxx\tran} = \trace{\MQ \MZ}
\eea
showing the equivalence of the objectives in~\eqref{eq:TLSBinaryClone} and~\eqref{eq:qcqpZ}.

\myParagraph{Equivalence of the constraints} It is trivial to see that $\vxx_q\tran \vxx_q = \trace{\vxx_q \vxx_q\tran} = 1$ is equivalent to $\trace{[\MZ]_{qq}}=1$ by using the cyclic property of the trace operator and inspecting the structure of $\MZ$. In addition, $\vxx_{q_i}\vxx_{q_i}\tran = \vxx_q \vxx_q\tran$ also directly maps to $[\MZ]_{q_i q_i} = [\MZ]_{qq}$ for all $i=1,\dots,N$. Lastly, requiring $\MZ\succeq 0$ and $\rank{\MZ} = 1$ is equivalent to restricting $\MZ$ to the form $\MZ = \vxx\vxx\tran$ for some vector $\vxx \in \Real{4(N+1)}$. Therefore, the constraint sets of eq.~\eqref{eq:TLSBinaryClone} and~\eqref{eq:qcqpZ} are also equivalent, concluding the proof. 
\end{proof}

\section{Proof of Proposition~\ref{prop:naiveRelax}}
\label{sec:proof:prop:naiveRelax}
\begin{proof}
We show eq.~\eqref{eq:naiveRelaxation} is a convex relaxation of~\eqref{eq:qcqpZ} by showing that (i) eq.~\eqref{eq:naiveRelaxation} is a relaxation (\ie, the constraint set of~\eqref{eq:naiveRelaxation} includes the one of~\eqref{eq:qcqpZ}), and (ii) eq.~\eqref{eq:naiveRelaxation} is convex. (i) is true because from~\eqref{eq:qcqpZ} to~\eqref{eq:naiveRelaxation} we have dropped the $\rank{\MZ}=1$ constraint. Therefore, the feasible set of~\eqref{eq:qcqpZ} is a subset of the feasible set of~\eqref{eq:naiveRelaxation}, and the optimal cost of~\eqref{eq:naiveRelaxation} is always smaller or equal than the optimal cost of~\eqref{eq:qcqpZ}. To prove (ii), we note that the objective function and the constraints of~\eqref{eq:naiveRelaxation} are all linear in $\MZ$, and $\MZ\succeq 0$ is a convex constraint, hence~\eqref{eq:naiveRelaxation} is a convex program.
\end{proof}

\section{Proof of Theorem~\ref{thm:strongDualityNoiseless}}
\label{sec:proof:theorem:strongDuality}
\begin{proof}
To prove Theorem~\ref{thm:strongDualityNoiseless}, we first use Lagrangian duality to derive the \emph{dual} problem of the \QCQP in~\eqref{eq:TLSBinaryClone}, and draw connections to the naive SDP relaxation in~\eqref{eq:naiveRelaxation} (Section~\ref{sec:LagrangianandWeakDuality}). 
Then we leverage the well-known \emph{Karush-Kuhn-Tucker} (KKT) conditions~\cite{Boyd04book} to prove a general sufficient condition for tightness, as shown in Theorem~\ref{thm:generalStrongDuality} (Section~\ref{sec:KKTStrongDuality}). 
%
Finally, in Section~\ref{sec:strongDualityNoiselessOutlierfree}, we demonstrate that in the case of no noise and no outliers, we can provide a constructive proof to show the sufficient condition in Theorem~\ref{thm:generalStrongDuality} always holds.

\subsection{Lagrangian Function and Weak Duality}
\label{sec:LagrangianandWeakDuality}
Recall the expressions of $\MQ_i$ in eq.~\eqref{eq:Qi}, and define $\MQ = \sumAllPointsi \MQ_i$.
The matrix $\MQ$ has the following block structure:
\bea
\hspace{-5mm}
& \MQ = \sumAllPointsi \MQ_i 
= 
\bmat{c|ccc}
\MZero & \MQ_{01} & \dots & \MQ_{0N} \\
\hline
\MQ_{01} & \MQ_{11} & \dots & \MZero \\
\vdots & \vdots & \ddots & \vdots \\
\MQ_{0N} & \MZero & \dots & \MQ_{NN}
\emat. \label{eq:matrixCompactQ}
\eea
With cost matrix $\MQ$, and using the cyclic property of the trace operator, the \QCQP in eq.~\eqref{eq:TLSBinaryClone} can be written compactly as in the following proposition.
\begin{proposition}[Primal \QCQP] \label{prop:primalQCQP}
The \QCQP in eq.~\eqref{eq:TLSBinaryClone} is equivalent to the following \QCQP:
\bea \label{eq:qcqpforLagrangian}
(P)\quad \min_{\vxx \in \Real{4(N+1)}} & \trace{\MQ \vxx \vxx\tran} \\
\subject & \trace{\vxx_q \vxx_q\tran} = 1 \nonumber \\
& \vxx_{q_i} \vxx_{q_i}\tran = \vxx_{q} \vxx_{q}\tran, \forall i=1,\dots,N \nonumber
\eea
We call this \QCQP the \emph{primal problem} (P).
\end{proposition}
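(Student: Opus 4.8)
The plan is to show that both the objective and the constraints of the compact \QCQP~\eqref{eq:qcqpforLagrangian} are mere re-parametrizations of those in~\eqref{eq:TLSBinaryClone}, so that the two problems optimize over the same variable $\vxx \in \Real{4(N+1)}$, share the same feasible set, and assign identical objective values at every feasible point. Equivalence then follows immediately.

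First I would establish the block structure of $\MQ$ claimed in~\eqref{eq:matrixCompactQ}. Recall from~\eqref{eq:Qi} that each $\MQ_i$ has only three nonzero $4\times4$ sub-blocks: the diagonal block indexed by $(\vq_i,\vq_i)$ equal to $\MQ_{ii}$, and the two symmetric off-diagonal blocks indexed by $(\vq,\vq_i)$ and $(\vq_i,\vq)$ equal to $\MQ_{0i}$. Summing $\MQ = \sumAllPointsi \MQ_i$, the $(\vq,\vq)$ block receives only zeros, each $(\vq_i,\vq_i)$ diagonal block receives the single contribution $\MQ_{ii}$, the $(\vq,\vq_i)$ and $(\vq_i,\vq)$ blocks each receive $\MQ_{0i}$, and every remaining $(\vq_i,\vq_j)$ block with $i\neq j$ is zero. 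This is exactly the structure displayed in~\eqref{eq:matrixCompactQ}.

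Next, for the objective I would use linearity of the sum followed by the cyclic property of the trace: since $\sumAllPointsi \vxx\tran\MQ_i\vxx = \vxx\tran\MQ\vxx$ is a scalar, we have $\vxx\tran\MQ\vxx = \trace{\vxx\tran\MQ\vxx} = \trace{\MQ\vxx\vxx\tran}$, which matches the objective of $(P)$. The same scalar-equals-its-own-trace argument handles the norm constraint: $\vxx_q\tran\vxx_q = \trace{\vxx_q\tran\vxx_q} = \trace{\vxx_q\vxx_q\tran}$, so $\vxx_q\tran\vxx_q=1$ is identical to $\trace{\vxx_q\vxx_q\tran}=1$. The remaining constraints $\vxx_{q_i}\vxx_{q_i}\tran = \vxx_q\vxx_q\tran$ appear verbatim in both formulations and require no manipulation. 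Since the two problems have identical feasible sets and identical objective values, they are equivalent, concluding the proof. There is no genuine obstacle here beyond careful bookkeeping of the sub-block indices when assembling $\MQ$ from the $\MQ_i$; every other step is an immediate consequence of the trace identities $\alpha = \trace{\alpha}$ for scalars $\alpha$ and $\trace{\M{A}\M{B}} = \trace{\M{B}\M{A}}$.
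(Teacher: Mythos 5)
Your proof is correct and takes essentially the same approach as the paper, whose own proof simply defines $\MQ = \sum_{i=1}^{N}\MQ_i$, invokes the cyclic property of the trace, and declares the result provable ``by inspection.'' You have merely spelled out the bookkeeping (the block structure of $\MQ$ and the scalar-equals-its-own-trace identities) that the paper leaves implicit.
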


The proposition can be proven by inspection.
We now introduce the \emph{Lagrangian function}~\cite{Boyd04book} of the primal ($P$).
\begin{proposition}[Lagrangian of Primal \QCQP]\label{prop:LagrangianPrimalQCQP}
The Lagrangian function of ($P$) can be written as:
\bea
& \calL(\vxx,\mu,\MLambda)=\trace{\MQ\vxx\vxx\tran}-\mu(\trace{\MJ\vxx\vxx\tran}-1) \nonumber \\
& - \sumAllPointsi \trace{\MLambda_i \vxx\vxx\tran} \label{eq:LagrangianNaive0}\\
& = \trace{(\MQ - \mu\MJ - \MLambda) \vxx\vxx\tran} + \mu. \label{eq:LagrangianNaive}
\eea
where $\MJ$ is a sparse matrix with all zeros except the first $4\times 4$ diagonal block being identity matrix:
\bea \label{eq:matrixJ}
\MJ = \bmat{c|ccc}
\eye_4 & \MZero & \cdots & \MZero \\
\hline
\MZero & \MZero & \cdots & \MZero \\
\vdots & \vdots & \ddots & \vdots \\
\MZero & \MZero & \cdots & \MZero
\emat,
\eea
and each $\MLambda_i,i=1,\dots,N$ is a sparse Lagrangian multiplier matrix with all zeros except two diagonal sub-blocks $\pm \MLambda_{ii} \in \sym^{4\times 4}$ (symmetric $4\times4$ matrices):
\bea \label{eq:matrixLambdai}
\MLambda_i = 
\bmat{c|ccccc} 
\MLambda_{ii} & \MZero & \cdots & \MZero & \cdots & \MZero \\
\hline
\MZero & \MZero & \cdots & \MZero & \cdots & \MZero \\
\vdots & \vdots & \ddots & \vdots & \ddots & \vdots\\
\MZero & \MZero & \cdots & -\MLambda_{ii} & \cdots & \MZero \\
\vdots & \vdots & \ddots & \vdots & \ddots & \vdots\\
\MZero & \MZero & \cdots & \MZero & \cdots & \MZero
\emat,
\eea
and $\MLambda$ is the sum of all $\MLambda_i$'s, $i=1,\dots,N$:
\bea \label{eq:matrixLambda}
& \displaystyle \MLambda=\sumAllPointsi \MLambda_i = \nonumber \\
& \hspace{-8mm} \bmat{c|ccccc} 
\sum\limits_{i=1}^N \MLambda_{ii} & \MZero & \dots & \MZero & \dots & \MZero \\
\hline
\MZero & -\MLambda_{11} & \dots & \MZero & \dots & \MZero \\
\vdots & \vdots & \ddots & \vdots & \ddots & \vdots\\
\MZero & \MZero & \dots & -\MLambda_{ii} & \dots & \MZero \\
\vdots & \vdots & \ddots & \vdots & \ddots & \vdots\\
\MZero & \MZero & \dots & \MZero & \dots & -\MLambda_{NN}
\emat.
\eea
\end{proposition}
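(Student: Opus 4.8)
The plan is to form the Lagrangian of the primal problem $(P)$ in the standard way, dualizing each of its two families of constraints and then collapsing everything into a single trace against the rank-one matrix $\vxx\vxx\tran$. First I would handle the scalar normalization constraint: since $\MJ$ is built to extract the leading $(q,q)$ block, one has $\trace{\MJ\vxx\vxx\tran} = \vxx_q\tran\vxx_q = \trace{\vxx_q\vxx_q\tran}$, so the constraint $\trace{\vxx_q\vxx_q\tran}=1$ reads $\trace{\MJ\vxx\vxx\tran}=1$, and I attach to it a scalar multiplier $\mu$, contributing the term $-\mu(\trace{\MJ\vxx\vxx\tran}-1)$.

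Next I would dualize the $N$ symmetric matrix equalities $\vxx_{q_i}\vxx_{q_i}\tran = \vxx_q\vxx_q\tran$. Each is an equality between $4\times4$ symmetric matrices, so the natural multiplier is a symmetric matrix $\MLambda_{ii}\in\sym^{4\times4}$, giving a penalty $\trace{\MLambda_{ii}(\vxx_{q_i}\vxx_{q_i}\tran - \vxx_q\vxx_q\tran)}$. The key observation, which I would verify by inspecting the block structure, is that this penalty equals $-\trace{\MLambda_i\vxx\vxx\tran}$ for the sparse matrix $\MLambda_i$ of eq.~\eqref{eq:matrixLambdai}: since $\MLambda_i$ is zero outside its $(q,q)$ and $(q_i,q_i)$ diagonal blocks, the trace $\trace{\MLambda_i\vxx\vxx\tran}$ only sees $[\vxx\vxx\tran]_{qq}=\vxx_q\vxx_q\tran$ and $[\vxx\vxx\tran]_{q_iq_i}=\vxx_{q_i}\vxx_{q_i}\tran$, so placing $+\MLambda_{ii}$ in the $(q,q)$ block and $-\MLambda_{ii}$ in the $(q_i,q_i)$ block reproduces exactly $\trace{\MLambda_{ii}(\vxx_q\vxx_q\tran-\vxx_{q_i}\vxx_{q_i}\tran)}$.

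Assembling the objective with these penalties gives precisely eq.~\eqref{eq:LagrangianNaive0}. Finally I would sum over $i$, set $\MLambda \doteq \sumAllPointsi\MLambda_i$ (which has the block form of eq.~\eqref{eq:matrixLambda} by superposing the individual blocks, the leading block accumulating to $\sumAllPointsi\MLambda_{ii}$), and invoke linearity of the trace to gather $\trace{\MQ\vxx\vxx\tran}$, $-\mu\trace{\MJ\vxx\vxx\tran}$, and $-\trace{\MLambda\vxx\vxx\tran}$ into the single term $\trace{(\MQ-\mu\MJ-\MLambda)\vxx\vxx\tran}$, leaving the constant $+\mu$ behind; this yields eq.~\eqref{eq:LagrangianNaive}. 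The argument is a direct computation, so there is no substantive obstacle: the only thing to watch is the bookkeeping of the block indices, together with the fact that, because these are equality constraints, the sign convention placing $+\MLambda_{ii}$ rather than $-\MLambda_{ii}$ in the leading block is immaterial to the resulting dual.
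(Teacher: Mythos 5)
Your proof is correct and follows essentially the same route as the paper's: both verify via the block structure that $\trace{\MJ\vxx\vxx\tran}$ and $\trace{\MLambda_i\vxx\vxx\tran}$ reproduce exactly the penalties for the norm constraint and the $N$ matrix equality constraints, and then collect terms by linearity of the trace to obtain the compact form. Your sign bookkeeping on the $\MLambda_i$ blocks is in fact the careful version (with $+\MLambda_{ii}$ in the $(q,q)$ block one gets $\trace{\MLambda_i\vxx\vxx\tran}=\trace{\MLambda_{ii}(\vxx_q\vxx_q\tran-\vxx_{q_i}\vxx_{q_i}\tran)}$, whereas the paper writes the opposite sign), but as you correctly observe, the multiplier of an equality constraint is sign-unrestricted, so the discrepancy is immaterial.
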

\begin{proof}
The sparse matrix $\MJ$ defined in~\eqref{eq:matrixJ} satisfies $\trace{\MJ \vxx \vxx\tran} = \trace{\vxx_q \vxx_q\tran}$. Therefore $\mu (\trace{\MJ \vxx\vxx\tran}-1)$ is the same as $\mu ( \trace{\vxx_q \vxx_q\tran} - 1)$, and $\mu$ is the Lagrange multiplier associated to the constraint $\trace{\vxx_q \vxx_q\tran} = 1$ in ($P$). 
Similarly, from the definition of the matrix $\MLambda_i$ in~\eqref{eq:matrixLambda}, 
it follows:
\bea
\trace{\MLambda_i \vxx\vxx\tran} = \trace{\MLambda_{ii} (\vxx_{q_i}\vxx_{q_i}\tran - \vxx_q \vxx_q\tran)},
\eea
where $\MLambda_{ii}$ is the Lagrange multiplier (matrix) associated to each of the constraints $\vxx_{q_i}\vxx_{q_i}\tran = \vxx_q \vxx_q\tran$ in ($P$).
This proves that~\eqref{eq:LagrangianNaive0} (and eq.~\eqref{eq:LagrangianNaive}, which rewrites~\eqref{eq:LagrangianNaive0} in compact form) is the Lagrangian function of ($P$).
\end{proof}

From the expression of the Lagrangian, we can readily obtain the Lagrangian \emph{dual} problem.

\begin{proposition}[Lagrangian Dual of Primal \QCQP]\label{prop:LagrangianDualPrimal}
The following SDP is the Lagrangian dual for the primal \QCQP (P) in eq.~\eqref{eq:qcqpforLagrangian}:
\bea \label{eq:LagrangianDualNaive}
(D)\quad \max_{\substack{\mu \in \Real{} \\ \MLambda \in \Real{4(N+1)\times 4(N+1)}} } & \mu \\
\subject & \MQ - \mu\MJ - \MLambda \succeq 0 \nonumber 
\eea
where $\MJ$ and $\MLambda$ satisfy the structure in eq.~\eqref{eq:matrixJ} and~\eqref{eq:matrixLambda}.
\end{proposition}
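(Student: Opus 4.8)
The plan is to derive $(D)$ directly from the definition of the Lagrangian dual: first minimize the Lagrangian $\calL(\vxx,\mu,\MLambda)$ over the primal variable $\vxx$ to obtain the dual function, then maximize over the multipliers. The starting point is the compact Lagrangian already established in Proposition~\ref{prop:LagrangianPrimalQCQP}, eq.~\eqref{eq:LagrangianNaive}, which I would rewrite as $\calL(\vxx,\mu,\MLambda) = \vxx\tran \MM \vxx + \mu$ with $\MM \doteq \MQ - \mu\MJ - \MLambda$, using the cyclic property of the trace, $\trace{\MM\vxx\vxx\tran}=\vxx\tran\MM\vxx$. I would emphasize one structural point at the outset: since the constraints of $(P)$ are \emph{equalities}, the associated multipliers are \emph{unconstrained in sign}, so $\mu$ ranges over $\Real{}$ and each symmetric $\MLambda_{ii}$ (and hence $\MLambda$, via eq.~\eqref{eq:matrixLambda}) ranges freely over symmetric matrices. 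This is the only place where the equality (rather than inequality) nature of the constraints enters, and it explains why no definiteness restriction on $\MLambda$ appears in $(D)$.

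The core step is to compute the dual function $g(\mu,\MLambda) \doteq \inf_{\vxx \in \Real{4(N+1)}} \calL(\vxx,\mu,\MLambda)$ by analyzing the homogeneous quadratic form $\vxx\tran \MM \vxx$, splitting into two cases. If $\MM \succeq 0$, then $\vxx\tran\MM\vxx \geq 0$ for every $\vxx$, the infimum is attained at $\vxx=\zero$, and $g(\mu,\MLambda)=\mu$. If $\MM \not\succeq 0$, then $\MM$ admits an eigenvector $\vv$ with $\vv\tran\MM\vv < 0$; setting $\vxx = t\vv$ and letting $t\to\infty$ drives $\vxx\tran\MM\vxx \to -\infty$, so $g(\mu,\MLambda)=-\infty$. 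Collecting the two cases gives $g(\mu,\MLambda)=\mu$ on the set $\{\MM \succeq 0\}$ and $g(\mu,\MLambda)=-\infty$ elsewhere.

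Finally, I would form the dual problem $\max_{\mu,\MLambda} g(\mu,\MLambda)$ and discard the region where $g=-\infty$ (which can never be optimal); this restricts the feasible set to $\MQ - \mu\MJ - \MLambda \succeq 0$ and leaves the objective equal to $\mu$, which is exactly $(D)$ as stated in eq.~\eqref{eq:LagrangianDualNaive}, with $\MJ$ and $\MLambda$ carrying the structure of eqs.~\eqref{eq:matrixJ} and~\eqref{eq:matrixLambda}. I do not anticipate a genuine obstacle, since this is the textbook derivation of the SDP dual of a \QCQP; the only point requiring care is the unboundedness argument in the indefinite case, where one must confirm that a strictly negative quadratic direction exists whenever $\MM$ fails to be positive semidefinite and that scaling along it is admissible --- which it is, because all constraints of $(P)$ were absorbed into the Lagrangian and $\vxx$ is otherwise free.
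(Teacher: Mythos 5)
Your proposal is correct and takes essentially the same approach as the paper's proof: both start from the definition $\max_{\mu,\MLambda}\min_{\vxx}\calL(\vxx,\mu,\MLambda)$, observe that the inner minimization of the quadratic $\vxx\tran(\MQ-\mu\MJ-\MLambda)\vxx + \mu$ yields $\mu$ when $\MQ-\mu\MJ-\MLambda \succeq 0$ and $-\infty$ otherwise, and then discard the $-\infty$ branch to obtain $(D)$. The only difference is cosmetic: you justify the two-case evaluation of the inner infimum explicitly (attainment at $\vxx=\zero$ versus divergence along a direction with negative quadratic form), whereas the paper states this observation without detail.
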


\begin{proof}
By definition, the dual problem is~\cite{Boyd04book}:
\bea
\hspace{-3mm}\max_{\mu,\MLambda}\min_{\vxx}\calL(\vxx,\mu,\MLambda), 
\eea
where $\calL(\vxx,\mu,\MLambda)$ is the Lagrangian function. We observe:
\bea
\hspace{-3mm}\max_{\mu,\MLambda}\min_{\vxx}\calL(\vxx,\mu,\MLambda) = \begin{cases}
\mu & \text{if } \MQ - \mu\MJ - \MLambda \succeq 0 \\
-\infty & \text{otherwise}
\end{cases}.
\eea
Since we are trying to maximize the Lagrangian (with respect to the dual variables), we discard the case 
leading to a cost of $-\infty$, obtaining the dual problem in~\eqref{eq:LagrangianDualNaive}.
\end{proof}

To connect the Lagrangian dual ($D$) to the naive SDP relaxation~\eqref{eq:naiveRelaxation} in Proposition~\ref{prop:naiveRelax}, we notice that the naive SDP relaxation is the dual SDP of the Lagrangian dual ($D$).
\begin{proposition}[Naive Relaxation is the Dual of the Dual]\label{prop:naiveRelaxIsDualofLagrangianDual}
The following SDP is the dual of the Lagrangian dual (D) in~\eqref{eq:LagrangianDualNaive}:
\bea
(DD)\quad \min_{\MZ \succeq 0}  & \trace{\MQ \MZ} \\
\subject  & \trace{[\MZ]_{qq}} = 1 \nonumber \\
& [\MZ]_{q_i q_i} = [\MZ]_{qq}, \forall i=1,\dots,N \nonumber
\eea
and (DD) is the same as the naive SDP relaxation in~\eqref{eq:naiveRelaxation}.
\end{proposition}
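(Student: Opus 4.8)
The plan is to obtain (DD) as the SDP dual of the Lagrangian dual (D) from Proposition~\ref{prop:LagrangianDualPrimal}, and then to observe that (DD) is literally the naive relaxation~\eqref{eq:naiveRelaxation}. I would treat (D) as a maximization over the scalar $\mu$ and the free symmetric blocks $\{\MLambda_{ii}\}_{i=1}^N$ parametrizing $\MLambda$, subject to the single linear matrix inequality $\MQ - \mu\MJ - \MLambda \succeq 0$. I would attach a positive-semidefinite multiplier $\MZ \succeq 0$ to this LMI and form the Lagrangian
\[
\calL(\mu,\MLambda,\MZ) = \mu + \trace{\MZ(\MQ - \mu\MJ - \MLambda)}.
\]
Since $\trace{\MZ\,(\MQ-\mu\MJ-\MLambda)} \geq 0$ whenever both factors are PSD, $\calL$ upper-bounds the primal objective $\mu$ on the feasible set; hence $g(\MZ) \doteq \sup_{\mu,\MLambda}\calL$ is a valid dual function and the dual of (D) is $\min_{\MZ\succeq 0} g(\MZ)$.

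Next I would evaluate $g(\MZ)$ by collecting the terms of $\calL$ that are linear in each free variable,
\[
\calL = \mu\,(1 - \trace{\MJ\MZ}) - \trace{\MLambda\MZ} + \trace{\MQ\MZ}.
\]
The supremum over the unconstrained scalar $\mu$ is finite only if its coefficient vanishes, i.e. $\trace{\MJ\MZ}=1$; since $\MJ$ is the identity on the $(q,q)$ block and zero elsewhere (eq.~\eqref{eq:matrixJ}), this is exactly $\trace{[\MZ]_{qq}}=1$. For the $\MLambda$ term I would invoke the sparsity pattern of $\MLambda_i$ in eq.~\eqref{eq:matrixLambdai} to write $\trace{\MLambda\MZ} = \sumAllPointsi \trace{\MLambda_{ii}\,([\MZ]_{qq} - [\MZ]_{q_i q_i})}$, so that finiteness of the supremum over each free symmetric $\MLambda_{ii}$ forces $[\MZ]_{q_i q_i} = [\MZ]_{qq}$ for every $i$. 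When these conditions hold one gets $g(\MZ)=\trace{\MQ\MZ}$, and otherwise $g(\MZ)=+\infty$.

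Finally I would assemble the finiteness conditions into the feasible set of the minimization, which reproduces (DD) verbatim, and compare it term by term with~\eqref{eq:naiveRelaxation} to conclude the two SDPs coincide. I expect the only delicate point to be the bookkeeping of the $\MLambda$ term: one must check, using the block structure of $\MLambda_i$, that the coefficient of each $\MLambda_{ii}$ is the symmetric matrix $[\MZ]_{qq}-[\MZ]_{q_i q_i}$ (the overall sign is immaterial, since annihilating it yields the same equality), and recall that requiring $\trace{\MLambda_{ii}\,(\cdot)}=0$ for \emph{all} symmetric $\MLambda_{ii}$ is equivalent to the full symmetric block equality $[\MZ]_{q_i q_i}=[\MZ]_{qq}$, not merely to a scalar trace identity. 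Everything else is routine SDP duality.
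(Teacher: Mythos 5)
Your proposal is correct, but it runs the duality computation in the opposite direction from the paper. You compute the Lagrangian dual of (D) directly: you attach a single PSD multiplier $\MZ \succeq 0$ to the LMI $\MQ - \mu\MJ - \MLambda \succeq 0$, and read off the finiteness conditions $\trace{\MJ\MZ} = 1$ and $[\MZ]_{q_i q_i} = [\MZ]_{qq}$ from the terms of the dual function that are linear in the free variables $\mu$ and $\MLambda_{ii}$; this lands exactly on (DD), matching the literal wording of the proposition. The paper instead derives the Lagrangian dual of (DD): it attaches $\mu\MJ$ to the trace constraint, the structured multipliers $\MLambda_i$ to the block equalities, and an \emph{extra} PSD multiplier $\MTheta$ to the conic constraint $\MZ \succeq 0$, obtaining $\max\ \mu$ subject to $\MQ - \mu\MJ - \MLambda \succeq \MTheta$, $\MTheta \succeq 0$, and then argues that $\MTheta$ can be set to zero to recover (D). Your route buys a slightly leaner argument---no auxiliary multiplier to introduce and then eliminate---and it proves the statement as written rather than its converse (the two being interchangeable only because the primal--dual correspondence between (D) and (DD) is symmetric); the paper's route, in turn, sets up the explicit weak-duality computation $f_{DD} - f_D = \trace{(\MQ - \mu\MJ - \MLambda)\MZ} \geq 0$ that it uses immediately afterwards. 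The two bookkeeping points you flagged are both handled correctly in your argument: the coefficient of each $\MLambda_{ii}$ is $[\MZ]_{qq} - [\MZ]_{q_i q_i}$, which is automatically symmetric because diagonal blocks of the symmetric matrix $\MZ$ are symmetric, so trace-orthogonality against \emph{all} symmetric $\MLambda_{ii}$ does force the full block equality (take $\MLambda_{ii} = [\MZ]_{qq} - [\MZ]_{q_i q_i}$), and the overall sign of that coefficient is indeed immaterial.
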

\begin{proof} 
We derive the Lagrangian dual problem of $(DD)$ and show that it is indeed ($D$) (see similar example in~\cite[p. 265]{Boyd04book}). Similar to the proof of Proposition~\eqref{prop:LagrangianPrimalQCQP}, we can associate Lagrangian multiplier $\mu\MJ$ (eq.~\eqref{eq:matrixJ}) to the constraint $\trace{[\MZ]_{qq}}=1$, and associate $\MLambda_i,i=1,\dots,N$ (eq.~\eqref{eq:matrixLambdai}) to constraints $[\MZ]_{q_i q_i} = [\MZ]_{qq},i=1,\dots,N$. In addition, we can associate matrix $\MTheta \in \sym^{4(N+1)\times 4(N+1)}$ to the constraint $\MZ \succeq 0$. Then the Lagrangian of the SDP ($DD$) is:
\bea
& \calL(\MZ,\mu,\MLambda,\MTheta)  \nonumber \\
& \hspace{-5mm}= \trace{\MQ \MZ} - \mu(\trace{\MJ \MZ} - 1) - \sum\limits_{i=1}^N (\trace{\MLambda_i \MZ}) - \trace{\MTheta \MZ} \nonumber \\
& = \trace{(\MQ - \mu \MJ - \MLambda - \MTheta) \MZ} + \mu,
\eea
and by definition, the dual problem is:
\bea
\max_{\mu,\MLambda,\MTheta}\min_{\MZ} \calL(\MZ,\mu,\MLambda,\MTheta).
\eea
Because:
\bea
\max_{\mu,\MLambda,\MTheta}\min_{\MZ} \calL = \begin{cases}
\mu & \text{if } \MQ - \mu\MJ - \MLambda - \MTheta \succeq 0 \\
-\infty & \text{otherwise}
\end{cases},
\eea
we can get the Lagrangian dual problem of ($DD$) is:
\bea \label{eq:LagrangianDualofDD}
\max_{\mu,\MLambda,\MTheta} & \mu \\
\subject & \MQ - \mu\MJ - \MLambda \succeq \MTheta \nonumber \\
& \MTheta \succeq 0 \nonumber 
\eea
Since $\MTheta$ is independent from the other decision variables and the cost function, we inspect that setting $\MTheta = \zero$ actually maximizes $\mu$ and therefore can be removed. Removing $\MTheta$ from~\eqref{eq:LagrangianDualofDD} indeed leads to ($D$) in eq.~\eqref{eq:LagrangianDualNaive}.
\end{proof}

We can also verify weak duality by the following calculation. Denote $f_{DD}=\trace{\MQ\MZ}$ and $f_D=\mu$. Recalling the structure of $\MLambda$ from eq.~\eqref{eq:matrixLambda}, we have $\trace{\MLambda \MZ}  = 0 $ because $[\MZ]_{q_i q_i}=[\MZ]_{qq},\forall i=1,\dots,N$. Moreover, we have $\mu=\mu\trace{\MJ \MZ}$ due to the pattern of $\MJ$ from eq.~\eqref{eq:matrixJ} and $\trace{[\MZ]_{qq}}=1$. Therefore, the following inequality holds:
\bea \label{eq:weakDualitySDP}
& f_{DD}-f_D=\trace{\MQ\MZ} - \mu \nonumber \\
& = \trace{\MQ\MZ} - \mu\trace{\MJ \MZ} - \trace{\MLambda \MZ} \nonumber \\
& = \trace{(\MQ-\mu\MJ-\MLambda)\MZ} \geq 0
\eea
where the last inequality holds true because both $\MQ-\mu\MJ-\MLambda$ and $\MZ$ are positive semidefinite matrices. Eq.~\eqref{eq:weakDualitySDP} shows $f_{DD} \geq f_D$ always holds inside the feasible set and therefore by construction of ($P$), ($D$) and ($DD$), we have the following \emph{weak duality} relation:
\bea \label{eq:weakDualityByConstruction}
f^\star_D \leq f^\star_{DD} \leq f^\star_P.
\eea
where the first inequality follows from eq.~\eqref{eq:weakDualitySDP} and the second inequality originates from the point that ($DD$) is a convex relaxation of ($P$), which has a larger feasible set and therefore the optimal cost of ($DD$) ($f^\star_{DD}$) is always smaller than the optimal cost of ($P$) ($f^\star_P$).

\subsection{KKT conditions and strong duality}
\label{sec:KKTStrongDuality}
Despite the fact that weak duality gives lower bounds for objective of the primal \QCQP ($P$), in this context we are interested in cases when \emph{strong duality} holds, \ie:
\bea \label{eq:strongdualityDef}
f^\star_D = f^\star_{DD} = f^\star_P,
\eea
since in these cases solving any of the two convex SDPs ($D$) or ($DD$) will also solve the original non-convex \QCQP ($P$) to \emph{global optimality}.

Before stating the main theorem for strong duality, we study the \emph{Karush-Kuhn-Tucker} (KKT) conditions~\cite{Boyd04book} for the primal \QCQP ($P$) in~\eqref{eq:qcqpforLagrangian}, which will help pave the way to study strong duality.
\begin{proposition}[KKT Conditions for Primal \QCQP]\label{prop:KKTForPrimalQCQP}
If $\vxx^\star$ is an optimal solution to the primal \QCQP ($P$) in~\eqref{eq:qcqpforLagrangian} (also~\eqref{eq:TLSBinaryClone}), and let $(\mu^\star,\MLambda^\star)$ be the corresponding optimal dual variables (maybe not unique), then it must satisfy the following KKT conditions:
\bea
& \expl{Stationary condition} 
& \label{eq:KKTStationary}(\MQ - \mu^\star \MJ - \MLambda^\star) \vxx^\star = \zero, \\
& \expl{Primal feasibility condition} 
& \label{eq:KKTFeasibility} \vxx^\star \text{ satisfies the constraints in~\eqref{eq:qcqpforLagrangian} }.
\eea
\end{proposition}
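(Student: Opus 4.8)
The plan is to obtain the two conditions as the standard first-order necessary conditions for the equality-constrained program ($P$) in~\eqref{eq:qcqpforLagrangian}, reading the stationarity equation directly off the gradient of the Lagrangian already derived in Proposition~\ref{prop:LagrangianPrimalQCQP}. Because ($P$) carries only equality constraints --- the sphere constraint $\trace{\vxx_q\vxx_q\tran}=1$ and the matrix equalities $\vxx_{q_i}\vxx_{q_i}\tran=\vxx_q\vxx_q\tran$ --- there are no inequality constraints, so complementary slackness and dual feasibility are vacuous, and only stationarity~\eqref{eq:KKTStationary} and primal feasibility~\eqref{eq:KKTFeasibility} remain to be established.

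First I would recall the compact Lagrangian $\calL(\vxx,\mu,\MLambda)=\trace{(\MQ-\mu\MJ-\MLambda)\vxx\vxx\tran}+\mu$ from~\eqref{eq:LagrangianNaive} and observe that $\MM\doteq\MQ-\mu\MJ-\MLambda$ is symmetric, since $\MQ$, $\MJ$ and $\MLambda$ are all symmetric by construction. Hence $\trace{\MM\vxx\vxx\tran}=\vxx\tran\MM\vxx$ is an ordinary quadratic form in $\vxx$ whose gradient (at fixed dual variables) is $\nabla_\vxx\calL=2\MM\vxx=2(\MQ-\mu\MJ-\MLambda)\vxx$. Evaluating at an optimizer $\vxx^\star$ with its multipliers $(\mu^\star,\MLambda^\star)$ and setting the gradient to zero yields exactly~\eqref{eq:KKTStationary}, the leading factor of two being irrelevant against a zero right-hand side. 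Primal feasibility~\eqref{eq:KKTFeasibility} is then immediate: by hypothesis $\vxx^\star$ solves ($P$), so it satisfies every constraint of ($P$).

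The step that I expect to require the most care is not the gradient computation but the justification that multipliers $(\mu^\star,\MLambda^\star)$ satisfying~\eqref{eq:KKTStationary} exist at all, since ($P$) is a \emph{nonconvex} \QCQP and KKT stationarity is necessary only under a constraint qualification. One route is to verify a qualification such as LICQ at $\vxx^\star$, i.e.\ that the gradients of the active equality constraints are linearly independent; this is delicate because each matrix equality $\vxx_{q_i}\vxx_{q_i}\tran=\vxx_q\vxx_q\tran$ contributes several scalar equations whose total count can exceed the ambient dimension. The cleaner route, consistent with the proposition's phrasing ``let $(\mu^\star,\MLambda^\star)$ be the corresponding optimal dual variables,'' is simply to posit the existence of optimal dual variables and let the subsequent argument (Theorem~\ref{thm:generalStrongDuality}) shoulder the constructive burden by exhibiting explicit multipliers in the noiseless, outlier-free regime; under that reading the stationary condition follows purely algebraically from the gradient above.
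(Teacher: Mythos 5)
Your proposal is correct and takes essentially the same approach as the paper — in fact the paper states this proposition \emph{without} proof, treating it as the standard first-order necessary conditions for an equality-constrained program (citing~\cite{Boyd04book}), which is exactly what you obtain by differentiating the Lagrangian~\eqref{eq:LagrangianNaive} and invoking primal feasibility. Your caveat about constraint qualifications is well placed but harmless here: the paper's downstream reasoning only ever uses the \emph{sufficiency} direction (Theorem~\ref{thm:generalStrongDuality} plus the explicit construction of $(\mu^\star,\MLambda^\star)$ in the noiseless case), so nothing in the paper hinges on the necessity claim you are asked to prove.
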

Using Propositions~\ref{prop:primalQCQP}-\ref{prop:KKTForPrimalQCQP}, we state the following theorem that provides a sufficient condition for strong duality. 
\begin{theorem}[Sufficient Condition for Strong Duality]\label{thm:generalStrongDuality}
Given a stationary point $\vxx^\star$, if there exist dual variables $(\mu^\star,\MLambda^\star)$ (maybe not unique) such that $(\vxx^\star,\mu^\star,\MLambda^\star)$ satisfy both the KKT conditions in Proposition~\ref{prop:KKTForPrimalQCQP} and the dual feasibility condition $\MQ - \mu^\star\MJ - \MLambda^\star \succeq 0$ in Proposition~\ref{prop:LagrangianDualPrimal}, then:
\begin{enumerate}[label=(\roman*)]
\item \label{item:strongDuality1} There is no duality gap between (P), (D) and (DD), \ie $f^\star_{P}=f^\star_{D}=f^\star_{DD}$,
\item \label{item:strongDuality2} $\vxx^\star$ is a global minimizer for (P).
\end{enumerate}
Moreover, if we have $\rank{\MQ - \mu^\star\MJ - \MLambda^\star}=4(N+1)-1$, \ie, $\MQ - \mu^\star\MJ - \MLambda^\star$ has $4(N+1)-1$ strictly positive eigenvalues and only one zero eigenvalue, then we have the following:
\begin{enumerate}[resume,label=(\roman*)]
\item \label{item:strongDuality3} $\pm \vxx^\star$ are the two unique global minimizers for (P),
\item \label{item:strongDuality4} The optimal solution to (DD), denoted as $\MZ^\star$, has rank 1 and can be written as $\MZ^\star=(\vxx^\star)(\vxx^\star)\tran$.
\end{enumerate}
\end{theorem}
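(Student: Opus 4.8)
The plan is to certify strong duality by showing that the candidate $(\vxx^\star,\mu^\star,\MLambda^\star)$ attains matching primal and dual objective values, and then to use complementary slackness together with the rank hypothesis to pin down the relaxation optimizer. Throughout I would abbreviate $\MM \doteq \MQ - \mu^\star\MJ - \MLambda^\star$, which is positive semidefinite by the dual feasibility hypothesis and satisfies $\MM\vxx^\star = \zero$ by the stationarity condition~\eqref{eq:KKTStationary}.

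First I would evaluate the primal cost at $\vxx^\star$. Left-multiplying~\eqref{eq:KKTStationary} by $(\vxx^\star)\tran$ yields $(\vxx^\star)\tran\MM\vxx^\star = 0$, i.e. $(\vxx^\star)\tran\MQ\vxx^\star = \mu^\star(\vxx^\star)\tran\MJ\vxx^\star + (\vxx^\star)\tran\MLambda^\star\vxx^\star$. Writing $\MZ^\star \doteq \vxx^\star(\vxx^\star)\tran$ and invoking primal feasibility~\eqref{eq:KKTFeasibility}, the sparsity of $\MJ$ gives $(\vxx^\star)\tran\MJ\vxx^\star = \trace{[\MZ^\star]_{qq}} = 1$, while the structure of $\MLambda^\star$ combined with $[\MZ^\star]_{q_iq_i} = [\MZ^\star]_{qq}$ gives $(\vxx^\star)\tran\MLambda^\star\vxx^\star = \trace{\MLambda^\star\MZ^\star} = 0$, exactly as in the weak-duality computation~\eqref{eq:weakDualitySDP}. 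Hence the primal objective at $\vxx^\star$ equals $\mu^\star$. Since $\vxx^\star$ is primal feasible and $(\mu^\star,\MLambda^\star)$ is dual feasible, we get $\mu^\star \leq f^\star_D$ and $f^\star_P \leq (\vxx^\star)\tran\MQ\vxx^\star = \mu^\star$, so the chain~\eqref{eq:weakDualityByConstruction} collapses to equalities. This establishes claims~\ref{item:strongDuality1} and~\ref{item:strongDuality2}: there is no gap among (P), (D), (DD), and $\vxx^\star$ attains $f^\star_P$.

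For the second half I would exploit the rank hypothesis. As strong duality now holds, any optimizer $\MZ^\star$ of (DD) satisfies complementary slackness $\trace{\MM\MZ^\star} = 0$; with $\MM\succeq 0$ and $\MZ^\star\succeq 0$ this forces $\MM\MZ^\star = \zero$ (via a symmetric square-root argument, $\trace{\MM^{1/2}\MZ^\star\MM^{1/2}} = 0$ and positive semidefiniteness give $\MM^{1/2}\MZ^\star\MM^{1/2} = \zero$, hence $\MM\MZ^\star = \zero$), so the range of $\MZ^\star$ lies in $\ker\MM$. The assumption $\rank{\MM} = 4(N+1)-1$ makes $\ker\MM$ one-dimensional, and since $\MM\vxx^\star = \zero$ it is spanned by $\vxx^\star$; therefore $\MZ^\star = c\,\vxx^\star(\vxx^\star)\tran$ for some $c\ge 0$, and the normalization $\trace{[\MZ^\star]_{qq}} = 1$ with $\|\vxx_q^\star\|^2 = 1$ forces $c = 1$. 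This proves claim~\ref{item:strongDuality4} and shows, in passing, that the (DD) optimizer is unique.

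Finally, for uniqueness of the primal minimizers~\ref{item:strongDuality3}, I would note that the objective and all constraints of (P) are homogeneous of degree two in $\vxx$, so $-\vxx^\star$ is also a global minimizer. Conversely, any global minimizer $\vy^\star$ of (P) lifts to a feasible $\vy^\star(\vy^\star)\tran$ attaining $f^\star_{DD}$, hence an optimizer of (DD); by the uniqueness just established, $\vy^\star(\vy^\star)\tran = \vxx^\star(\vxx^\star)\tran$, whence $\vy^\star = \pm\vxx^\star$. I expect the main obstacle to be the complementary-slackness step, i.e. rigorously deducing $\MM\MZ^\star = \zero$ from $\trace{\MM\MZ^\star} = 0$ with both factors positive semidefinite, and then cleanly transferring the rank-one conclusion on $\MZ^\star$ back into uniqueness of the primal minimizers up to sign.
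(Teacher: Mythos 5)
Your proof is correct; claims (i)--(ii) are handled exactly as in the paper (evaluate the stationarity identity at $\vxx^\star$ to get $(\vxx^\star)\tran\MQ\vxx^\star=\mu^\star$, then collapse the weak-duality chain $f^\star_D\le f^\star_{DD}\le f^\star_P$), but your treatment of (iii)--(iv) reverses the paper's logical order and uses different tools. The paper proves (iii) \emph{first}, by an elementary geometric argument: every feasible point of (P) stacks $N+1$ unit quaternions and so has norm $\sqrt{N+1}$, hence the one-dimensional kernel of $\MM^\star\doteq\MQ-\mu^\star\MJ-\MLambda^\star$ meets the feasible set exactly at $\pm\vxx^\star$, and any other feasible $\vxx$ satisfies $\vxx\tran\MM^\star\vxx>0$, i.e.\ has strictly larger cost; it then proves (iv) by factoring $\MM^\star=\barM\tran\barM$ and $\MZ^\star=\barZ\barZ\tran$, deducing $\barM\barZ=\zero$ from $\trace{\MM^\star\MZ^\star}=0$, applying Sylvester's rank inequality to get $\rank{\MZ^\star}\le 1$, and finally invoking (iii) to identify the factor $\barZ$ with $\pm\vxx^\star$. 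You instead prove (iv) first: complementary slackness plus the square-root argument gives $\MM\MZ^\star=\zero$, so the column space of $\MZ^\star$ lies in $\ker\MM=\mathrm{span}(\vxx^\star)$, and the normalization $\trace{[\MZ^\star]_{qq}}=1$ pins down $\MZ^\star=\vxx^\star(\vxx^\star)\tran$ together with its uniqueness; (iii) then follows by lifting any primal minimizer $\vy^\star$ to the (DD)-feasible, cost-optimal matrix $\vy^\star(\vy^\star)\tran$ and using that uniqueness to conclude $\vy^\star=\pm\vxx^\star$. Both routes are sound. The paper's buys elementarity (no matrix square roots, no appeal to uniqueness of the SDP solution) and makes the fixed-norm geometry of the feasible set explicit; yours buys a cleaner logical structure --- (iv) is self-contained, uniqueness of the (DD) optimizer comes for free, and (iii) drops out as a corollary rather than needing a separate cost-comparison argument. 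The two steps you flagged as potential obstacles are indeed fine: $\trace{\MM\MZ^\star}=0$ follows from the paper's weak-duality computation $\trace{\MQ\MZ^\star}-\mu^\star=\trace{\MM\MZ^\star}$ once $f^\star_{DD}=\mu^\star$ is established, and the implication $\trace{\MM^{1/2}\MZ^\star\MM^{1/2}}=0\Rightarrow\MM^{1/2}\MZ^\star\MM^{1/2}=\zero\Rightarrow\MM\MZ^\star=\zero$ is rigorous because a positive-semidefinite matrix with zero trace must vanish.
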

\begin{proof}
Recall from eq.~\eqref{eq:weakDualityByConstruction} that we already have weak duality by construction of ($P$), ($D$) and ($DD$). Now since $(\vxx^\star,\mu^\star,\MLambda^\star)$ satisfies the KKT conditions~\eqref{eq:KKTFeasibility} and~\eqref{eq:KKTStationary}, we have:
\bea
& (\MQ - \mu^\star \MJ - \MLambda^\star)\vxx^\star = \zero \Rightarrow \nonumber \\
& (\vxx^\star)\tran(\MQ - \mu^\star\MJ - \MLambda^\star)(\vxx^\star) = 0 \Rightarrow \\
& \hspace{-8mm}(\vxx^\star)\tran \MQ (\vxx^\star) = \mu^\star (\vxx^\star)\tran \MJ (\vxx^\star) + (\vxx^\star)\tran \MLambda^\star (\vxx^\star) \Rightarrow \\
& \expl{$\vxstar$ satisfies the constraints in ($P$) by KKT~\eqref{eq:KKTFeasibility}}
& \expl{Recall structural partition of $\MJ$ and $\MLambda$ in~\eqref{eq:matrixJ} and~\eqref{eq:matrixLambda}}
& \trace{\MQ (\vxstar)(\vxstar)\tran} = \mu^\star,
\eea
which shows the cost of ($P$) is equal to the cost of ($D$) at $(\vxstar, \mu^\star, \MLambda^\star)$. Moreover, since $\MQ - \mu^\star\MJ - \MLambda^\star \succeq 0$ means $(\mu^\star,\MLambda^\star)$ is actually dual feasible for ($D$), hence we have strong duality between ($P$) and ($D$):
$f^\star_P = f^\star_D$. Because $f^\star_{DD}$ is sandwiched between $f^\star_P$ and $f^\star_D$ according to~\eqref{eq:weakDualityByConstruction}, we have indeed strong duality for all of them:
\bea
f^\star_D = f^\star_{DD} = f^\star_P,
\eea
proving~\ref{item:strongDuality1}. To prove~\ref{item:strongDuality2}, we observe that for any $\vxx \in \Real{4(N+1)}$, $\MQ - \mu^\star\MJ - \MLambda^\star \succeq 0$ means:
\bea
\vxx\tran (\MQ - \mu^\star\MJ - \MLambda^\star) \vxx \geq 0.
\eea
Specifically, let $\vxx$ be any vector that lies inside the feasible set of ($P$), \ie, $\trace{\vxx_q \vxx_q\tran}=1$ and $\vxx_{q_i}\vxx_{q_i}\tran = \vxx_q \vxx_q\tran,\forall i=1,\dots,N$, then we have:
\bea
& \vxx\tran (\MQ - \mu^\star\MJ - \MLambda^\star) \vxx \geq 0 \Rightarrow \nonumber \\
& \vxx\tran \MQ \vxx \geq \mu^\star \vxx\tran \MJ \vxx + \vxx\tran \MLambda^\star \vxx \Rightarrow \\
& \trace{\MQ \vxx \vxx\tran} \geq \mu^\star = \trace{\MQ (\vxstar)(\vxstar)\tran},
\eea
showing that the cost achieved by $\vxstar$ is no larger than the cost achieved by any other vectors inside the feasible set, which means $\vxstar$ is indeed a global minimizer to ($P$).

Next we use the additional condition of $\rank{\MQ - \mu^\star\MJ - \MLambda^\star}=4(N+1)-1$ to prove $\pm \vxstar$ are the two unique global minimizers to ($P$). Denote $\MM^\star=\MQ - \mu^\star\MJ - \MLambda^\star$, since $\MM^\star$ has only one zero eigenvalue with associated eigenvector $\vxstar$ (\cf KKT condition~\eqref{eq:KKTStationary}), its nullspace is defined by $\ker(\MM^\star) = \{\vxx \in \Real{4(N+1)}: \vxx = a \vxstar ,a\in \Real{} \}$. Now denote the feasible set of ($P$) as $\Omega(P)$. It is clear to see that any vector in $\Omega(P)$ is a vertical stacking of $N+1$ unit quaternions and thus must have 2-norm equal to $\sqrt{N+1}$. Since $\vxstar \in \Omega(P)$ is already true, in order for any vector $\vxx=a\vxstar$ in $\ker(\MM^\star)$ to be in $\Omega(P)$ as well, it must hold $|a|\Vert \vxx \Vert = \sqrt{N+1}$ and therefore $a=\pm 1$, \ie, $\ker(\MM^\star)\cap \Omega(P) = \{\pm\vxstar \}$. With this observation, we can argue that for any $\vxx$ inside $\Omega(P)$ that is not equal to $\{\pm \vxstar \}$, $\vxx$ cannot be in $\ker(\MM^\star)$ and therefore:
\bea
& \vxx\tran (\MM^\star) \vxx > 0 \Rightarrow \\
& \vxx\tran \MQ \vxx > \mu^\star \vxx\tran \MJ \vxx + \vxx\tran \MLambda^\star \vxx \Rightarrow \\
& \trace{\MQ \vxx \vxx\tran} > \mu^\star = \trace{\MQ(\vxstar)(\vxstar)\tran},
\eea
which means for any vector $\vxx \in \Omega(P)/ \{\pm \vxstar \} $, it results in strictly higher cost than $\pm \vxstar$. Hence $\pm \vxstar$ are the two unique global minimizers to ($P$) and~\ref{item:strongDuality3} is true.

To prove~\ref{item:strongDuality4}, notice that since strong duality holds and $f^\star_{DD}=f^\star_D$, we can write the following according to eq.~\eqref{eq:weakDualitySDP}:
\bea \label{eq:strongDualitySDP}
\trace{(\MQ - \mu^\star \MJ - \MLambda^\star) \MZ^\star} = 0.
\eea
Since $\MM^\star = \MQ - \mu^\star \MJ - \MLambda^\star \succeq 0$ and has rank $4(N+1)-1$, we can write $\MM^\star = \barM\tran \barM$ with $\barM \in \Real{(4(N+1)-1) \times 4(N+1)}$ and $\rank{\barM} = 4(N+1)-1$. Similarly, we can write $\MZ^\star = \barZ \barZ\tran$ with $\barZ \in \Real{4(N+1) \times r}$ and $\rank{\barZ}=r=\rank{\MZ^\star}$. Then from~\eqref{eq:strongDualitySDP} we  have:
\bea
& \trace{\MM^\star \MZ^\star} = \trace{\barM\tran \barM \barZ \barZ\tran} \nonumber \\
& = \trace{\barZ\tran\barM\tran \barM\barZ} = \trace{(\barM\barZ)\tran (\barM \barZ)} \nonumber \\
& = \Vert \barM \barZ \Vert_F^2 = 0,
\eea
which gives us $\barM \barZ = \zero$. Using the rank inequality $\rank{\barM \barZ} \geq \rank{\barM} + \rank{\barZ} - 4(N+1)$, we have:
\bea
& 0 \geq 4(N+1)-1 + r - 4(N+1) \Rightarrow \nonumber \\
& r \leq 1.
\eea
Since $\barZ \neq \zero$, we conclude that $\rank{\MZ^\star} = \rank{\barZ}=r=1$. As a result, since $\rank{\MZ^\star} = 1$, and the rank constraint was the only constraint we dropped when relaxing the \QCQP ($P$) to SDP ($DD$), we conclude that the relaxation is indeed tight. In addition, the rank 1 decomposition $\barZ$ of $\MZ^\star$ is also the global minimizer to ($P$). However, from~\ref{item:strongDuality3}, we know there are only two global minimizers to ($P$): $\vxstar$ and $-\vxstar$, so $\barZ \in \{\pm \vxstar\}$. Since the sign is irrelevant, we can always write $\MZ^\star = (\vxstar) (\vxstar)\tran$, concluding the proof for~\ref{item:strongDuality4}. 
\end{proof}

\subsection{Strong duality in noiseless and outlier-free case}
\label{sec:strongDualityNoiselessOutlierfree}

Now we are ready to prove Theorem~\ref{thm:strongDualityNoiseless} using Theorem~\ref{thm:generalStrongDuality}. To do so, we will show that in the noiseless and outlier-free case, it is always possible to construct $\mu^\star$ and $\MLambda^\star$ from $\vxstar$ and $\MQ$ such that $(\vxstar,\mu^\star,\MLambda^\star)$ satisfies the KKT conditions, and the dual matrix $\MM^\star = \MQ - \mu^\star\MJ - \MLambda^\star$ is positive semidefinite and has only one zero eigenvalue. 

\myParagraph{Preliminaries} When there are no noise and outliers in the measurements, \ie, $\vb_i = \MR \va_i, \forall i=1,\dots,N$, we have $\Vert \vb_i \Vert^2 = \Vert \va_i \Vert^2,\forall i=1,\dots,N$. Moreover, without loss of generality, we assume $\sigma_i^2=1$ and $\barcsq > 0$. 
With these assumptions, we simplify the blocks $\MQ_{0i}$ and $\MQ_{ii}$ in the matrix $\MQ$, \cf~\eqref{eq:matrixCompactQ} and~\eqref{eq:costInQuadraticFormBeforeLifting}:
\bea
& \label{eq:simplifiedQ0i} \MQ_{0i} = \frac{\Vert \va_i \Vert^2}{2}\eye_4 + \frac{\MOmega_1(\hatvb_i) \MOmega_2(\hatva_i)}{2} - \frac{\barcsq}{4}\eye_4,\\
& \label{eq:simplifiedQii} \MQ_{ii} = \Vert \va_i \Vert^2 \eye_4 + \MOmega_1(\hatvb_i)\MOmega_2(\hatva_i) + \frac{\barcsq}{2}\eye_4.
\eea

Due to the primal feasibility condition~\eqref{eq:KKTFeasibility}, we know $\vxx^\star$ can be written as $N+1$ quaternions (\cf proof of~\eqref{eq:equivalentConstraints}): $\vxx^\star= [(\vq^\star)\tran\ \theta_1^\star (\vq^\star)\tran\ \dots\ \theta_N^\star(\vq^\star)\tran]\tran$, where each $\theta_i^\star$ is a binary variable in $\{-1,+1\}$. Since we have assumed no noise and no outliers, we know $\theta_i^\star=+1$ for all $i$'s and therefore $\vxx^\star= [(\vq^\star)\tran\ (\vq^\star)\tran\ \dots\ (\vq^\star)\tran]\tran$. We can write the KKT stationary condition in matrix form as:
\bea \label{eq:KKTStationaryMatrixForm}
\hspace{-7mm}\overbrace{ \bmat{c|ccc}
\substack{-\mu^\star \eye_4 - \\ \sum\limits_{i=1}^N\MLambda^\star_{ii} }& \MQ_{01} & \dots & \MQ_{0N} \\
\hline
\MQ_{01} & {\scriptstyle \MQ_{11}+\MLambda^\star_{11} } & \dots & \MZero \\
\vdots & \vdots & \ddots & \vdots \\
\MQ_{0N} & \MZero & \dots & {\scriptstyle \MQ_{NN}+\MLambda^\star_{NN}}
\emat}^{\MM^\star = \MQ - \mu^\star\MJ - \MLambda^\star}
\overbrace{ \bmat{c}
\vq^\star \\
\hline
\vq^\star \\
\vdots \\
\vq^\star
\emat}^{\vxstar} = \zero \nonumber \hspace{-8mm}\\\hspace{-8mm}
\eea
and we index the block rows of $\MM^\star$ from top to bottom as $0,1,\dots,N$. The first observation we make is that eq.~\eqref{eq:KKTStationaryMatrixForm} is a (highly) under-determined linear system with respect to the dual variables $(\mu^\star,\MLambda^\star)$, because the linear system has $10N+1$ unknowns  (each symmetric $4\times4$ matrix $\MLambda^\star_{ii}$ has 10 unknowns, plus one unknown from $\mu^\star$), but only has $4(N+1)$ equations. To expose the structure of the linear system, we will apply a similarity transformation to the matrix $\MM^\star$. Before we introduce the similarity transformation, we need additional properties about quaternions, described in the Lemma below. The properties can be proven by inspection.

\begin{lemma}[More Quaternion Properties]\label{lemma:quaternionProperties}
The following properties about unit quaternions, involving the linear operators $\MOmega_1(\cdot)$ and $\MOmega_2(\cdot)$ introduced in eq.~\eqref{eq:Omega_1} hold true: 
\begin{enumerate}[label=(\roman*)]
\item \label{item:commutativeOmega} Commutative: for any two vectors $\vxx,\vy \in \Real{4}$, The following equalities hold:
\bea \label{eq:commutativeOmega}
\MOmega_1(\vxx)\MOmega_2(\vy) = \MOmega_2(\vy)\MOmega_1(\vxx); \\
\MOmega_1(\vxx)\MOmega_2\tran(\vy) = \MOmega_2\tran(\vy)\MOmega_1(\vxx); \\
\MOmega_1\tran(\vxx)\MOmega_2(\vy) = \MOmega_2(\vy)\MOmega_1\tran(\vxx); \\
\MOmega_1\tran(\vxx)\MOmega_2\tran(\vy) = \MOmega_2\tran(\vy)\MOmega_1\tran(\vxx). 
\eea
\item \label{item:orthogonality} Orthogonality: for any unit quaternion $\vq \in \calS^3$, $\MOmega_1(\vq)$ and $\MOmega_2(\vq)$ are orthogonal matrices:
\bea
\MOmega_1(\vq)\MOmega_1\tran(\vq) = \MOmega_1\tran(\vq)\MOmega_1(\vq) = \eye_4; \\
\MOmega_2(\vq)\MOmega_1\tran(\vq) = \MOmega_2\tran(\vq)\MOmega_1(\vq) = \eye_4.
\eea
\item \label{item:rotateqtoe} For any unit quaternion $\vq \in \calS^3$, the following equalities hold:
\bea
\MOmega_1\tran(\vq)\vq = \MOmega_2\tran(\vq)\vq = [0,0,0,1]\tran.
\eea
\item \label{item:OmegaandR} For any unit quaternion $\vq \in \calS^3$, denote $\MR$ as the unique rotation matrix associated with $\vq$, then the following equalities hold:
\bea
\hspace{-10mm} \MOmega_1(\vq)\MOmega_2\tran(\vq)=\MOmega_2\tran(\vq)\MOmega_1(\vq)=\bmat{cc} \MR & \zero \\ \zero & 1 \emat \doteq \tilde{\MR}; \\
\hspace{-10mm} \MOmega_2(\vq)\MOmega_1\tran(\vq) = \MOmega_1\tran(\vq)\MOmega_2(\vq) = \bmat{cc} \MR\tran & \zero \\ \zero & 1 \emat \doteq \tilde{\MR}\tran.
\eea
\end{enumerate}
\end{lemma}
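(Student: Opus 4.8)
The plan is to avoid the brute-force $4\times4$ matrix multiplications implied by ``by inspection'' and instead reduce all four properties to a handful of algebraic facts about the quaternion product, using that $\MOmega_1(\vq)$ and $\MOmega_2(\vq)$ are precisely the matrix representations of left- and right-multiplication by $\vq$, as recorded in eq.~\eqref{eq:quatProduct}. The single preliminary I would establish first, directly from eq.~\eqref{eq:Omega_1} and eq.~\eqref{eq:q_inverse}, is the transpose-inverse relation $\MOmega_1\tran(\vq)=\MOmega_1(\vq\inv)$ and $\MOmega_2\tran(\vq)=\MOmega_2(\vq\inv)$; this lets me rewrite every transpose in the lemma as an inverse of the argument, after which each claim becomes a pure statement about $\otimes$.

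For part~\ref{item:commutativeOmega} I would invoke associativity of the quaternion product on all of $\Real{4}$: for any $\vp$, $\MOmega_1(\vxx)\MOmega_2(\vy)\vp=\vxx\otimes(\vp\otimes\vy)=(\vxx\otimes\vp)\otimes\vy=\MOmega_2(\vy)\MOmega_1(\vxx)\vp$, so the two matrices commute, and the three transposed variants follow immediately by substituting $\vxx\mapsto\vxx\inv$ and/or $\vy\mapsto\vy\inv$ via the preliminary. For parts~\ref{item:orthogonality} and~\ref{item:rotateqtoe} I would first note that $\MOmega_1$ is an algebra homomorphism and $\MOmega_2$ an anti-homomorphism, i.e.\ $\MOmega_1(\vp\otimes\vr)=\MOmega_1(\vp)\MOmega_1(\vr)$ and $\MOmega_2(\vp\otimes\vr)=\MOmega_2(\vr)\MOmega_2(\vp)$, both of which are one-line consequences of associativity applied to eq.~\eqref{eq:quatProduct}. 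Writing the identity quaternion as $\ve=[0,0,0,1]\tran$, orthogonality is then $\MOmega_1(\vq)\MOmega_1\tran(\vq)=\MOmega_1(\vq\otimes\vq\inv)=\MOmega_1(\ve)=\eye_4$, and symmetrically $\MOmega_2(\vq)\MOmega_2\tran(\vq)=\MOmega_2(\vq\inv\otimes\vq)=\MOmega_2(\ve)=\eye_4$ using the anti-homomorphism; part~\ref{item:rotateqtoe} is simply $\MOmega_1\tran(\vq)\vq=\MOmega_1(\vq\inv)\vq=\vq\inv\otimes\vq=\ve$ and likewise $\MOmega_2\tran(\vq)\vq=\vq\otimes\vq\inv=\ve$.

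Finally, for part~\ref{item:OmegaandR} I would appeal to the conjugation formula eq.~\eqref{eq:q_pointRot}: for any $\va$, $\MOmega_1(\vq)\MOmega_2\tran(\vq)\va=\MOmega_1(\vq)\MOmega_2(\vq\inv)\va=\vq\otimes\va\otimes\vq\inv$, which eq.~\eqref{eq:q_pointRot} identifies with $\tilde{\MR}\va$; since this holds for all $\va$ we obtain $\MOmega_1(\vq)\MOmega_2\tran(\vq)=\tilde{\MR}$, and the commutation from part~\ref{item:commutativeOmega} supplies the equal product in reversed order. Replacing $\vq$ by $\vq\inv$, whose associated rotation is $\MR\tran$, yields the second identity $\MOmega_2(\vq)\MOmega_1\tran(\vq)=\tilde{\MR}\tran$.

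The only place where a mechanical slip would propagate is the order reversal in the $\MOmega_2$ anti-homomorphism together with the convention that the scalar part sits last, so that the identity element is $\ve=[0,0,0,1]\tran$ and $\vq\inv$ is the conjugate of eq.~\eqref{eq:q_inverse}; once this bookkeeping is fixed, every equality is a one-line consequence of associativity and eq.~\eqref{eq:q_pointRot}, which is exactly why the statement can be dismissed as holding ``by inspection.'' As a consistency check I would read the second line of part~\ref{item:orthogonality} as pairing $\MOmega_2$ with its \emph{own} transpose, since the mixed product $\MOmega_2(\vq)\MOmega_1\tran(\vq)$ equals $\tilde{\MR}\tran\neq\eye_4$ by part~\ref{item:OmegaandR}.
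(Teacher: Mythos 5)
Your proof is correct, and it takes a genuinely different route from the paper: the paper's entire argument for Lemma~\ref{lemma:quaternionProperties} is the sentence ``the properties can be proven by inspection,'' i.e., brute-force multiplication of the explicit $4\times 4$ matrices in eq.~\eqref{eq:Omega_1}. You instead exploit the fact that $\MOmega_1(\vxx)$ and $\MOmega_2(\vy)$ are the left- and right-multiplication operators of the quaternion algebra (eq.~\eqref{eq:quatProduct}), so that part~\ref{item:commutativeOmega} becomes the statement that left and right multiplications commute in any associative algebra, parts~\ref{item:orthogonality} and~\ref{item:rotateqtoe} follow from the homomorphism/anti-homomorphism identities together with $\vq\kron\vq\inv=\vq\inv\kron\vq=[0,0,0,1]\tran$ and $\MOmega_1(\ve)=\MOmega_2(\ve)=\eye_4$, and part~\ref{item:OmegaandR} follows from the conjugation formula~\eqref{eq:q_pointRot}. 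The trade-off is clear: the paper's inspection is self-contained but tedious, unilluminating, and error-prone, whereas your argument rests on a single foundational fact---associativity of the product defined by~\eqref{eq:quatProduct}---which itself still needs an inspection-level check (or the standard embedding of the quaternions into a matrix algebra), but then delivers each identity in one line and explains \emph{why} it holds. Two further points in your favor: the preliminary $\MOmega_k\tran(\vq)=\MOmega_k(\vq\inv)$ is already recorded and used by the paper in the proof of Proposition~\ref{prop:qcqp}, so you import nothing foreign; and your reading of the second display of part~\ref{item:orthogonality} as $\MOmega_2(\vq)\MOmega_2\tran(\vq)=\MOmega_2\tran(\vq)\MOmega_2(\vq)=\eye_4$ is the right one, since the mixed products as literally typeset equal $\tilde{\MR}\tran$ and $\tilde{\MR}$ by part~\ref{item:OmegaandR}, and the paper itself only ever invokes the corrected version (e.g., inserting $\MOmega_2(\vq^\star)\MOmega_2\tran(\vq^\star)=\eye_4$ in the derivation of eq.~\eqref{eq:barMQ0i}).

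One half-line should be added in your part~\ref{item:OmegaandR}: eq.~\eqref{eq:q_pointRot} identifies $\vq\kron\va\kron\vq\inv$ with $\tilde{\MR}\va$ only for $\va$ of the form $[\vu\tran\ 0]\tran$. To obtain the matrix identity on all of $\Real{4}$, decompose a general $\va$ as $[\vu\tran\ 0]\tran + s\ve$ and note that conjugation fixes the scalar axis, $\vq\kron(s\ve)\kron\vq\inv = s(\vq\kron\vq\inv) = s\ve$; linearity then gives $\MOmega_1(\vq)\MOmega_2\tran(\vq)=\tilde{\MR}$. This is a presentational omission rather than a mathematical gap, since the needed fact is one you already use in parts~\ref{item:orthogonality} and~\ref{item:rotateqtoe}.
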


\myParagraph{Rewrite dual certificates using similarity transform}
Now we are ready to define the similarity transformation. We define the matrix $\MD \in \Real{4(N+1)\times 4(N+1)}$ as the following block diagonal matrix:
\bea
& \MD = \bmat{cccc}
\MOmega_1(\vq^\star) & \zero & \cdots & \zero \\
\zero & \MOmega_1(\vq^\star) & \cdots & \zero \\
\vdots & \vdots & \ddots & \vdots \\
\zero & \zero & \cdots & \MOmega_1(\vq^\star)
\emat.
\eea
It is obvious to see that $\MD$ is an orthogonal matrix from~\ref{item:orthogonality} in Lemma~\ref{lemma:quaternionProperties}, \ie, $\MD\tran \MD = \MD \MD\tran = \eye_{4(N+1)}$. Then we have the following Lemma.
\begin{lemma}[Similarity Transformation]\label{lemma:similarityTransform}
Define $\MN^\star \doteq \MD\tran \MM^\star \MD$, then:
\begin{enumerate}[label=(\roman*)]
\item \label{item:sameEigenvalues} $\MN^\star$ and $\MM^\star$ have the same eigenvalues, and
\bea
\hspace{-5mm}\MM^\star \succeq 0 \Leftrightarrow \MN^\star \succeq 0,\ \rank{\MM^\star}=\rank{\MN^\star}.
\eea
\item \label{item:rotatedq} Define $\ve = [0,0,0,1]\tran$ and $\vr = [\ve\tran\ \ve\tran\ \dots\ \ve\tran]\tran$ as the vertical stacking of $N+1$ copies of $\ve$, then:
\bea
\MM^\star \vxstar=\zero \Leftrightarrow \MN^\star \vr = \zero.
\eea
\end{enumerate}
\end{lemma}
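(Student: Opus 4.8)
The plan is to exploit that $\MD$ is orthogonal (property~\ref{item:orthogonality} of Lemma~\ref{lemma:quaternionProperties} gives $\MD\tran\MD = \MD\MD\tran = \eye_{4(N+1)}$), so that $\MN^\star = \MD\tran\MM^\star\MD = \MD\inv\MM^\star\MD$ is simultaneously a \emph{similarity} and a \emph{congruence} transformation of $\MM^\star$. Part~\ref{item:sameEigenvalues} then follows from standard invariance properties, while part~\ref{item:rotatedq} reduces to the single algebraic identity $\vxstar = \MD\vr$.

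For part~\ref{item:sameEigenvalues}, I would first note that $\MM^\star = \MQ - \mu^\star\MJ - \MLambda^\star$ is symmetric (being a combination of the symmetric matrices $\MQ$, $\MJ$ and $\MLambda^\star$), hence so is $\MN^\star$. Because $\MN^\star$ is \emph{similar} to $\MM^\star$, the two share the same characteristic polynomial and therefore the same eigenvalue multiset. For symmetric matrices the rank equals the number of nonzero eigenvalues, and positive semidefiniteness is equivalent to nonnegativity of all eigenvalues; both properties depend only on the eigenvalues, so $\rank{\MN^\star} = \rank{\MM^\star}$ and $\MN^\star\succeq 0 \Leftrightarrow \MM^\star\succeq 0$. (Equivalently, via \emph{congruence}: for any $\vy$ one has $\vy\tran\MN^\star\vy = (\MD\vy)\tran\MM^\star(\MD\vy)$, and $\vy\mapsto\MD\vy$ is a bijection, which yields the semidefiniteness equivalence directly, while invertibility of $\MD$ preserves rank.)

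For part~\ref{item:rotatedq} the key step is to establish $\MD\vr = \vxstar$. Since $\MD$ is block diagonal with blocks $\MOmega_1(\vq^\star)$, and $\vr$ stacks $N+1$ copies of $\ve = [0,0,0,1]\tran$, the $k$-th block of $\MD\vr$ equals $\MOmega_1(\vq^\star)\ve$, which is exactly the last column of $\MOmega_1(\vq^\star)$; from the definition~\eqref{eq:Omega_1} this last column is $\vq^\star$ itself. Hence every block of $\MD\vr$ equals $\vq^\star$, i.e.\ $\MD\vr = \vxstar$ in the noiseless, outlier-free case (where $\vxstar$ consists of $N+1$ identical copies of $\vq^\star$). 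Equivalently, $\MD\tran\vxstar = \vr$ follows blockwise from property~\ref{item:rotateqtoe}, namely $\MOmega_1\tran(\vq^\star)\vq^\star = \ve$. With this identity I would chain the equivalences
\begin{align*}
\MM^\star\vxstar = \zero
&\;\Leftrightarrow\; \MM^\star\MD\vr = \zero \\
&\;\Leftrightarrow\; \MD\tran\MM^\star\MD\vr = \zero
\;\Leftrightarrow\; \MN^\star\vr = \zero,
\end{align*}
where the second equivalence uses that $\MD\tran$ is invertible.

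I do not expect a genuine obstacle: the lemma is essentially a change of basis, and once orthogonality of $\MD$ and the identity $\MD\vr = \vxstar$ are in place, everything follows. The only point demanding care is the quaternion bookkeeping — verifying that the last column of $\MOmega_1(\vq^\star)$ is $\vq^\star$ (equivalently invoking property~\ref{item:rotateqtoe} with the correct operator and transpose), and confirming that in the noiseless/outlier-free regime $\vxstar$ is precisely $N+1$ identical copies of $\vq^\star$, so that the blockwise computation applies uniformly across all $N+1$ blocks.
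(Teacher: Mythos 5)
Your proposal is correct and follows essentially the same route as the paper: part~\ref{item:sameEigenvalues} via orthogonality of $\MD$ (so $\MD\tran = \MD\inv$) and invariance of eigenvalues/semidefiniteness under similarity, and part~\ref{item:rotatedq} via the identity $\MD\tran\vxstar = \vr$ (equivalently $\MD\vr = \vxstar$) from $\MOmega_1\tran(\vq^\star)\vq^\star = \ve$, followed by the same chain of equivalences. The extra details you supply (symmetry of $\MM^\star$, the congruence viewpoint, and the column-inspection of $\MOmega_1(\vq^\star)$) are sound refinements of the same argument.
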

\begin{proof} Because $\MD\tran \MD = \eye_{4(N+1)}$, we have $\MD\tran = \MD\inv$ and $\MN^\star = \MD\tran \MM^\star \MD = \MD\inv \MM\tran \MD$ is similar to $\MM^\star$. Therefore, by matrix similarity, $\MM^\star$ and $\MN^\star$ have the same eigenvalues, and $\MM^\star$ is positive semidefinite if and only if $\MN^\star$ is positive semidefinite~\cite[p. 12]{Boyd06notes}. To show~\ref{item:rotatedq}, we start by pre-multiplying both sides of eq.~\eqref{eq:KKTStationaryMatrixForm} by $\MD\tran$:
\bea
& \MM^\star \vxstar =\zero \Leftrightarrow \MD\tran\MM^\star \vxstar = \MD\tran \zero \Leftrightarrow \\
& \expl{$\MD \MD\tran=\eye_{4(N+1)}$}
& \MD\tran\MM^\star (\MD \MD\tran) \vxstar = \zero \Leftrightarrow \\
& (\MD\tran\MM^\star\MD)(\MD\tran\vxstar) = \zero \Leftrightarrow \\
& \expl{$\MOmega_1\tran(\vq^\star) \vq^\star = \ve$ from~\ref{item:rotateqtoe} in Lemma~\ref{lemma:quaternionProperties}}
& \MN^\star \vr = \zero, \label{eq:KKTStationaryN}
\eea
concluding the proof.
\end{proof}
Lemma~\ref{lemma:similarityTransform} suggests that constructing $\MM^\star \succeq 0$ and $\rank{\MM^\star}=4(N+1)-1$ that satisfies the KKT conditions~\eqref{eq:KKTStationary} is equivalent to constructing $\MN^\star \succeq 0$ and $\rank{\MN^\star}=4(N+1)-1$ that satisfies~\eqref{eq:KKTStationaryN}. We then study the structure of $\MN^\star$ and rewrite the KKT stationary condition.

\myParagraph{Rewrite KKT conditions}
In noiseless and outlier-free case, the KKT condition $\MM^\star \vxstar = \zero$ is equivalent to $\MN^\star\vr =\zero$. Formally, after the similarity transformation $\MN^\star = \MD\tran \MM^\star \MD$, the KKT conditions can be explicitly rewritten as in the following proposition.

\begin{proposition}[KKT conditions after similarity transformation]
The KKT condition $\MN^\star \vr = \zero$ (which is equivalent to eq.~\eqref{eq:KKTStationaryMatrixForm}) can be written in matrix form:
\bea \label{eq:KKTStationaryMatrixFormInN}
\hspace{-8mm}\bmat{c|ccc}
 {\scriptstyle -\sum\limits_{i=1}^N\barLambda^\star_{ii} } & \barMQ_{01} & \dots & \barMQ_{0N} \\
\hline
\barMQ_{01} & {\scriptstyle \barMQ_{11}+\barLambda^\star_{11} } & \dots & \MZero \\
\vdots & \vdots & \ddots & \vdots \\
\barMQ_{0N} & \MZero & \dots & {\scriptstyle \barMQ_{NN}+\barLambda^\star_{NN}}
\emat
\bmat{c}
\ve \\
\hline 
\ve \\
\vdots \\
\ve
\emat = \zero.
\eea
with $\mu^\star = 0$ being removed compared to eq.~\eqref{eq:KKTStationaryMatrixForm} and $\barMQ_{0i}$ and $\barMQ_{ii},i=1,\dots,N$ are the following sparse matrices:
\bea
& \barMQ_{0i} \doteq \MOmega_1\tran(\vq^\star)\MQ_{0i}\MOmega_1(\vq^\star) \nonumber \\
& \hspace{-5mm} = \bmat{cc}
\left( \frac{\Vert \va_i \Vert^2}{2} - \frac{\barcsq}{4} \right) \eye_3 - \frac{ \vSkew{\va_i}^2}{2} - \frac{\va_i \va_i\tran}{2} & \zero \\
\zero & - \frac{\barcsq}{4}
\emat; \label{eq:barMQ0i}\\
& \barMQ_{ii} \doteq \MOmega_1\tran(\vq^\star)\MQ_{ii}\MOmega_1(\vq^\star) \nonumber \\
& \hspace{-5mm} = \bmat{cc}
\left( \Vert \va_i \Vert^2 + \frac{\barcsq}{2} \right)\eye_3 - \vSkew{\va_i}^2 - \va_i \va_i\tran & \zero \\
\zero & \frac{\barcsq}{2}
\emat, \label{eq:barMQii}
\eea
and $\barLambda_{ii}^\star \doteq \MOmega_1\tran(\vq^\star) \MLambda^\star_{ii} \MOmega_1(\vq^\star) $ has the following form:
\bea \label{eq:generalFormBarLambda}
\barLambda^\star_{ii} = \bmat{cc}
\ME_{ii} & \valpha_i \\
\valpha_i\tran & \lambda_i
\emat,
\eea
where $\ME_{ii} \in \sym^{3\times3}$,$\valpha_i \in \Real{3}$ and $\lambda_i \in \Real{}$.
\end{proposition}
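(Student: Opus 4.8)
The plan is to exploit that $\MD$ is block diagonal with every diagonal block equal to the orthogonal matrix $\MOmega_1(\vq^\star)$, so that the congruence $\MN^\star=\MD\tran\MM^\star\MD$ acts independently on each $4\times4$ sub-block of $\MM^\star$ in~\eqref{eq:KKTStationaryMatrixForm}, replacing a generic block $\MB$ by $\MOmega_1\tran(\vq^\star)\MB\MOmega_1(\vq^\star)$. By the orthogonality property~\ref{item:orthogonality} of Lemma~\ref{lemma:quaternionProperties}, every scalar multiple $c\eye_4$ is invariant under this map; hence the off-diagonal blocks become $\barMQ_{0i}\doteq\MOmega_1\tran(\vq^\star)\MQ_{0i}\MOmega_1(\vq^\star)$, the $i$-th diagonal block becomes $\barMQ_{ii}+\barLambda^\star_{ii}$ with $\barLambda^\star_{ii}\doteq\MOmega_1\tran(\vq^\star)\MLambda^\star_{ii}\MOmega_1(\vq^\star)$, and the leading block becomes $-\mu^\star\eye_4-\sum_i\barLambda^\star_{ii}$. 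This already yields the matrix in~\eqref{eq:KKTStationaryMatrixFormInN} up to the value of $\mu^\star$. Moreover, since each $\MLambda^\star_{ii}$ is symmetric and an orthogonal congruence preserves symmetry, $\barLambda^\star_{ii}$ is symmetric; partitioning it into a leading $3\times3$ block, an off-diagonal vector, and a trailing scalar gives exactly the general form~\eqref{eq:generalFormBarLambda} with $\ME_{ii}\in\sym^{3\times3}$, $\valpha_i\in\Real{3}$, $\lambda_i\in\Real{}$.

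The core computation is the closed form of $\barMQ_{0i}$ and $\barMQ_{ii}$. Using the noiseless assumption $\Vert\vb_i\Vert=\Vert\va_i\Vert$, I would start from the simplified expressions~\eqref{eq:simplifiedQ0i}--\eqref{eq:simplifiedQii}; the identity terms are unchanged, so only the term $\MOmega_1\tran(\vq^\star)\MOmega_1(\hatvb_i)\MOmega_2(\hatva_i)\MOmega_1(\vq^\star)$ needs work. First I would commute $\MOmega_2(\hatva_i)$ to the far right past $\MOmega_1(\vq^\star)$ using property~\ref{item:commutativeOmega}, leaving $\MOmega_1\tran(\vq^\star)\MOmega_1(\hatvb_i)\MOmega_1(\vq^\star)\,\MOmega_2(\hatva_i)$. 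Then, using $\MOmega_1\tran(\vq^\star)=\MOmega_1((\vq^\star)\inv)$ and the fact that $\MOmega_1$ is a homomorphism of the quaternion product (an immediate consequence of~\eqref{eq:quatProduct} and associativity), the first three factors collapse to $\MOmega_1((\vq^\star)\inv\kron\hatvb_i\kron\vq^\star)$; the noiseless relation $\hatvb_i=\vq^\star\kron\hatva_i\kron(\vq^\star)\inv$ then gives $(\vq^\star)\inv\kron\hatvb_i\kron\vq^\star=\hatva_i$, so the whole term reduces to $\MOmega_1(\hatva_i)\MOmega_2(\hatva_i)$, which depends only on $\va_i$. Expanding this pure-quaternion product in $3\times3$/scalar blocks and using $\vSkew{\va_i}^2=\va_i\va_i\tran-\Vert\va_i\Vert^2\eye_3$ yields a block-diagonal matrix whose leading $3\times3$ block is $-\vSkew{\va_i}^2-\va_i\va_i\tran$ and whose trailing scalar is $-\Vert\va_i\Vert^2$; substituting back produces~\eqref{eq:barMQ0i}--\eqref{eq:barMQii}, where in particular the vector couplings cancel and the trailing scalars collapse to $-\frac{\barcsq}{4}$ and $\frac{\barcsq}{2}$.

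Finally I would show $\mu^\star=0$, which is what licenses dropping $-\mu^\star\eye_4$. The vector $\ve=[0,0,0,1]\tran$ selects the last column of each transformed block, so from~\eqref{eq:barMQ0i}--\eqref{eq:barMQii} one reads $\barMQ_{0i}\ve=-\frac{\barcsq}{4}\ve$ and $\barMQ_{ii}\ve=\frac{\barcsq}{2}\ve$. The $i$-th block row of $\MN^\star\vr=\zero$ reads $(\barMQ_{0i}+\barMQ_{ii}+\barLambda^\star_{ii})\ve=\zero$, hence $\barLambda^\star_{ii}\ve=-(\barMQ_{0i}+\barMQ_{ii})\ve$. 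Substituting into the leading block row, $-\mu^\star\ve+\sum_i(\barMQ_{0i}-\barLambda^\star_{ii})\ve=\zero$, turns it into $-\mu^\star\ve+\sum_i(2\barMQ_{0i}+\barMQ_{ii})\ve=\zero$; since $2(-\frac{\barcsq}{4})+\frac{\barcsq}{2}=0$ the sum vanishes and $\mu^\star=0$ follows. This simultaneously verifies that the leading block row is automatically consistent once the $N$ lower block rows hold.

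I expect the main obstacle to be the middle step: recognizing that $\MOmega_1\tran(\vq^\star)\MOmega_1(\hatvb_i)\MOmega_1(\vq^\star)$ collapses to $\MOmega_1(\hatva_i)$ and then carrying out the pure-quaternion block expansion with the correct signs, since this is where the full $\vq^\star$-dependence must cancel and the precise conventions of $\MOmega_1,\MOmega_2$ in~\eqref{eq:Omega_1} matter. Everything else---the block-wise action of the congruence, the symmetry of $\barLambda^\star_{ii}$, and the $\mu^\star=0$ argument---follows mechanically once these two closed forms are established.
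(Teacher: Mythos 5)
Your proposal is correct: the block-wise action of the congruence, the symmetry argument for $\barLambda^\star_{ii}$, and the closed forms~\eqref{eq:barMQ0i}--\eqref{eq:barMQii} all come out right, and your logic (assume the full KKT system $\MN^\star\vr=\zero$ holds, then deduce $\mu^\star=0$) matches the logical content of the proposition. However, you take a genuinely different route from the paper at the two key technical steps. For the core computation, the paper inserts $\MOmega_2(\vq^\star)\MOmega_2\tran(\vq^\star)$ and invokes Lemma~\ref{lemma:quaternionProperties}~\ref{item:commutativeOmega} and~\ref{item:OmegaandR} to conjugate $\MOmega_1(\hatvb_i)$ by $\tilde{\MR}^\star$, then carries out explicit rotation-matrix algebra (using $(\MR^\star)\tran\vSkew{\vb_i}\MR^\star=\vSkew{(\MR^\star)\tran\vb_i}$ and $\vb_i=\MR^\star\va_i$); you instead collapse $\MOmega_1\tran(\vq^\star)\MOmega_1(\hatvb_i)\MOmega_1(\vq^\star)=\MOmega_1\bigl((\vq^\star)\inv\kron\hatvb_i\kron\vq^\star\bigr)=\MOmega_1(\hatva_i)$ via the homomorphism property of $\MOmega_1$ (valid, immediate from~\eqref{eq:quatProduct} and associativity, though not among the properties listed in Lemma~\ref{lemma:quaternionProperties}) together with the quaternion conjugation identity; both paths land on the same product $\MOmega_1(\hatva_i)\MOmega_2(\hatva_i)$ and the same block expansion. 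For $\mu^\star=0$, the paper uses the identity $\mu^\star=\trace{\MQ(\vxstar)(\vxstar)\tran}$ inherited from the KKT/duality analysis of Theorem~\ref{thm:generalStrongDuality}, then evaluates this trace to zero using eq.~\eqref{eq:developQuatProduct} and the noiseless relation $\vb_i=\MR^\star\va_i$; you instead extract $\barLambda^\star_{ii}\ve=-(\barMQ_{0i}+\barMQ_{ii})\ve$ from the $i$-th block rows, substitute into the top block row, and use $2(-\tfrac{\barcsq}{4})+\tfrac{\barcsq}{2}=0$. Your version is more self-contained (purely linear-algebraic, local to the transformed system, and it exposes that the top row is automatically consistent with the lower rows), while the paper's version is conceptually informative in that it identifies $\mu^\star$ with the optimal primal cost, which vanishes in the noiseless outlier-free case.
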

\begin{proof}
We first prove that $\barMQ_{0i}$ and $\barMQ_{ii}$ have the forms in~\eqref{eq:barMQ0i} and~\eqref{eq:barMQii} when there are no noise and outliers in the measurements. Towards this goal, we examine the similar matrix to $\MOmega_1(\hatvb_i)\MOmega_2(\hatva_i)$ (as it is a common part to $\MQ_{0i}$ and $\MQ_{ii}$):
\bea
& \MOmega_1\tran(\vq^\star) \MOmega_1(\hatvb_i)\MOmega_2(\hatva_i) \MOmega_1(\vq^\star) \nonumber \\
& \expl{Commutative property in Lemma~\ref{lemma:quaternionProperties}~\ref{item:commutativeOmega}}
& = \MOmega_1\tran(\vq^\star) \MOmega_1(\hatvb_i) \MOmega_1(\vq^\star) \MOmega_2(\hatva_i) \\
& \expl{Orthogonality property in Lemma~\ref{lemma:quaternionProperties}~\ref{item:orthogonality}}
& \hspace{-5mm}= \MOmega_1\tran(\vq^\star) \MOmega_2(\vq^\star) \MOmega_2\tran(\vq^\star)  \MOmega_1(\hatvb_i) \MOmega_1(\vq^\star) \MOmega_2(\hatva_i) \\
& \expl{Lemma~\ref{lemma:quaternionProperties}~\ref{item:commutativeOmega} and~\ref{item:OmegaandR}}
& = (\tilde{\MR}^\star)\tran \MOmega_1(\hatvb_i)\MOmega_2\tran(\vq^\star)\MOmega_1(\vq^\star)\MOmega_2(\hatva_i) \\
& \expl{Lemma~\ref{lemma:quaternionProperties}~\ref{item:OmegaandR} }
& = (\tilde{\MR}^\star)\tran \MOmega_1(\hatvb_i) (\tilde{\MR}^\star) \MOmega_2(\hatva_i) \\
& = \bmat{cc}
(\MR^\star)\tran \vSkew{\vb_i}\MR^\star & (\MR^\star)\tran \vb_i \\
-\vb_i\tran \MR^\star & 0
\emat \MOmega_2(\hatva_i) \\
& = \bmat{cc}
\vSkew{\va_i} & \va_i \\
-\va_i\tran & 0 
\emat
\bmat{cc}
-\vSkew{\va_i} & \va_i \\
-\va_i\tran & 0
\emat \\
&=\bmat{cc}
-\vSkew{\va_i}^2-\va_i\va_i\tran & \zero \\
\zero & -\Vert \va_i \Vert^2
\emat.
\eea
Using this property, and recall the definition of $\MQ_{0i}$ and $\MQ_{ii}$ in eq.~\eqref{eq:simplifiedQ0i} and~\eqref{eq:simplifiedQii}, the similar matrices to $\MQ_{0i}$ and $\MQ_{ii}$ can be shown to have the expressions in~\eqref{eq:barMQ0i} and~\eqref{eq:barMQii} by inspection. 

Showing $\barLambda_{ii}$ having the expression in~\eqref{eq:generalFormBarLambda} is straightforward. Since $\MLambda_{ii}^\star$ is symmetric, $\barLambda_{ii}^\star= \MOmega_1\tran(\vq^\star) \MLambda^\star_{ii} \MOmega_1(\vq^\star)$ must also be symmetric and therefore eq.~\eqref{eq:generalFormBarLambda} must be true for some $\ME_{ii}$, $\valpha_i$ and $\lambda_i$.

Lastly, in the noiseless and outlier-free case, $\mu^\star$ is zero due to the following:
\bea
& \mu^\star = \trace{\MQ(\vxstar)(\vxstar)\tran} \nonumber \\
& \expl{Recall $\MQ$ from eq.~\eqref{eq:matrixCompactQ} }
& = (\vq^\star)\tran \left( \sumAllPointsi (\MQ_{ii} + 2\MQ_{0i}) \right) (\vq^\star) \\
& \hspace{-5mm}= (\vq^\star)\tran \left( 2 \sumAllPointsi \Vert \va_i \Vert^2 +  \MOmega_1(\hatvb_i)\MOmega_2(\hatva_i)  \right) (\vq^\star) \\
& \expl{Recall eq.~\eqref{eq:developQuatProduct}} 
& = 2 \sumAllPointsi \Vert \va_i \Vert^2 - \hatvb_i\tran (\vq^\star \kron \hatva_i \kron (\vq^\star)\inv) \\
& = 2 \sumAllPointsi \Vert \va_i \Vert^2 - \vb_i\tran (\MR^\star\va_i) \\
& = 2 \sumAllPointsi \Vert \va_i \Vert^2 - \| \vb_i \|^2 = 0.
\eea
concluding the proof.
\end{proof}

\myParagraph{From KKT condition to sparsity pattern of dual variable} From the above proposition about the rewritten KKT condition~\eqref{eq:KKTStationaryMatrixFormInN}, we can claim the following sparsity pattern on the dual variables $\barLambda_{ii}$.
\begin{lemma}[Sparsity Pattern of Dual Variables] \label{lemma:sparsityPatternDualVar}
The KKT condition eq.~\eqref{eq:KKTStationaryMatrixFormInN} holds if and only if the dual variables $\{\barLambda_{ii}\}_{i=1}^N$ have the following sparsity pattern:
\bea \label{eq:sparsityPatternLambda}
\barLambda^\star_{ii} = \bmat{cc}
\ME_{ii} & \zero_3 \\
\zero_3 & -\frac{\barcsq}{4}
\emat,
\eea
\ie, $\valpha_i=0$ and $\lambda_i = -\frac{\barcsq}{4}$ in eq.~\eqref{eq:generalFormBarLambda} for every $i=1,\dots,N$.
\end{lemma}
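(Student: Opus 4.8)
The plan is to prove the equivalence by expanding the block matrix–vector product in~\eqref{eq:KKTStationaryMatrixFormInN} one block-row at a time, exploiting the fact that both $\barMQ_{0i}$ and $\barMQ_{ii}$ in~\eqref{eq:barMQ0i}--\eqref{eq:barMQii} are block-diagonal (a $3\times3$ upper-left block and a scalar lower-right entry, with vanishing off-diagonal blocks) and that every block of the right-hand vector in~\eqref{eq:KKTStationaryMatrixFormInN} equals $\ve=[\zero_3\tran\ 1]\tran$. Because $\ve$ selects only the last coordinate, multiplication by these block-diagonal matrices simply reads off their last columns, which is what makes the computation short.

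First I would expand the $i$-th lower block-row ($i=1,\dots,N$). Its only nonzero blocks are $\barMQ_{0i}$ (in block-column $0$) and $\barMQ_{ii}+\barLambda_{ii}^\star$ (on the diagonal), so the row collapses to the decoupled equation $(\barMQ_{0i}+\barMQ_{ii}+\barLambda_{ii}^\star)\ve=\zero$. Evaluating each term on $\ve$ gives
\[
\barMQ_{0i}\ve=\bmat{c}\zero_3\\ -\tfrac{\barcsq}{4}\emat,\qquad
\barMQ_{ii}\ve=\bmat{c}\zero_3\\ \tfrac{\barcsq}{2}\emat,
\]
while the general form~\eqref{eq:generalFormBarLambda} gives $\barLambda_{ii}^\star\ve=[\valpha_i\tran\ \lambda_i]\tran$. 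Adding these, the block-row equation becomes $[\valpha_i\tran\ (\tfrac{\barcsq}{4}+\lambda_i)]\tran=\zero$, which forces $\valpha_i=\zero_3$ and $\lambda_i=-\tfrac{\barcsq}{4}$ for every $i$. This is exactly the sparsity pattern~\eqref{eq:sparsityPatternLambda}, establishing the $(\Rightarrow)$ direction.

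Finally I would close the $(\Leftarrow)$ direction and verify consistency of the top block-row. Substituting~\eqref{eq:sparsityPatternLambda} back into each of the $N$ lower block-rows reproduces $(\barMQ_{0i}+\barMQ_{ii}+\barLambda_{ii}^\star)\ve=\zero$ by the same computation, and the $0$-th block-row $\sum_{i=1}^N(\barMQ_{0i}-\barLambda_{ii}^\star)\ve=\zero$ is then satisfied automatically, since $\barMQ_{0i}\ve$ and $\barLambda_{ii}^\star\ve$ both equal $[\zero_3\tran\ -\tfrac{\barcsq}{4}]\tran$ and cancel summand-by-summand. The only point needing care — and the closest thing to an obstacle — is the bookkeeping observation that it is the individual lower block-rows, rather than the aggregated top row, that pin down each $\barLambda_{ii}^\star$ entrywise; the top row contributes only a redundant summed condition, so no additional constraint on the dual variables arises.
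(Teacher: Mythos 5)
Your proof is correct and takes essentially the same approach as the paper: both expand the KKT system in~\eqref{eq:KKTStationaryMatrixFormInN} block-row by block-row, evaluate $\barMQ_{0i}\ve$, $\barMQ_{ii}\ve$, and $\barLambda^\star_{ii}\ve$ so that each lower row forces $\valpha_i=\zero_3$ and $\lambda_i=-\frac{\barcsq}{4}$, and then check that the top block-row is automatically satisfied by summand-wise cancellation. The only difference is presentational order --- you establish $(\Rightarrow)$ before $(\Leftarrow)$, whereas the paper does the reverse.
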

\begin{proof}
We first proof the trivial direction ($\Leftarrow$). If $\barLambda^\star_{ii}$ has the sparsity pattern in eq.~\eqref{eq:sparsityPatternLambda}, then the product of the $i$-th block row of $\MN^\star$ ($i=1,\dots,N$) and $\vr$ writes (\cf eq.~\eqref{eq:KKTStationaryMatrixFormInN}):
\bea
& \left( \barMQ_{0i} + \barMQ_{ii} + \barLambda_{ii}^\star \right) \ve \nonumber \\
& \expl{Recall $\barMQ_{0i}$ and $\barMQ_{ii}$ from eq.~\eqref{eq:barMQ0i} and~\eqref{eq:barMQii}}
& = \bmat{cc}
\star & \zero_3 \\
\zero_3 & 0
\emat \bmat{c}
\zero_3 \\ 1 \emat = \zero_4, \label{eq:KKTEveryRowVanish}
\eea
which is equal to $\zero_4$ for sure. For the product of the $0$-th block row (the very top row) of $\MN^\star$ and $\vr$, we get:
\bea
& \left( \sumAllPointsi\barMQ_{0i} - \barLambda_{ii}^\star \right)\ve \nonumber \\
& = \bmat{cc}
\star & \zero_3 \\
\zero_3 & 0
\emat \bmat{c}
\zero_3 \\ 1 \emat = \zero_4,
\eea
which vanishes as well. Therefore, $\barLambda^\star_{ii}$ having the sparsity pattern in eq.~\eqref{eq:sparsityPatternLambda} provides a sufficient condition for KKT condition in eq.~\eqref{eq:KKTStationaryMatrixFormInN}. To show the other direction ($\Rightarrow$), first notice that eq.~\eqref{eq:KKTStationaryMatrixFormInN} implies eq.~\eqref{eq:KKTEveryRowVanish} holds true for all $i=1,\dots,N$ and in fact, eq.~\eqref{eq:KKTEveryRowVanish} provides the following equation for constraining the general form of $\barLambda^\star_{ii}$ in eq.~\eqref{eq:generalFormBarLambda}:
\bea
& \left( \barMQ_{0i} + \barMQ_{ii} + \barLambda_{ii}^\star \right) \ve \nonumber \\
& = \bmat{cc}
\star & \valpha_i \\
\valpha_i\tran & \lambda_i + \frac{\barcsq}{4}
\emat \bmat{c}
\zero_3 \\ 1 \emat = \zero_4,
\eea
which directly gives rise to:
\bea
\begin{cases}
\valpha_i = \zero_3 \\
\lambda_i + \frac{\barcsq}{4} = 0
\end{cases}.
\eea
and the sparsity pattern in eq.~\eqref{eq:sparsityPatternLambda}, showing that $\barLambda^\star_{ii}$ having the sparsity pattern in eq.~\eqref{eq:sparsityPatternLambda} is also a necessary condition.
\end{proof}

\myParagraph{Find the dual certificate}
Lemma~\ref{lemma:sparsityPatternDualVar} further suggests that the linear system resulted from KKT conditions~\eqref{eq:KKTStationaryMatrixFormInN} (also~\eqref{eq:KKTStationaryMatrixForm}) is highly under-determined in the sense that we have full freedom in choosing the $\ME_{ii}$ block of the dual variable $\barLambda_{ii}^\star$. Therefore, we introduce the following proposition.

\begin{proposition}[Construction of Dual Variable]\label{prop:constructionDualVariable}
In the noiseless and outlier-free case, choosing $\ME_{ii}$ as:
\bea \label{eq:choiceOfE}
\ME_{ii} = \vSkew{\va_k}^2 - \frac{1}{4} \barcsq \eye_3, \forall i =1,\dots,N
\eea
and choosing $\barLambda^\star_{ii}$ as having the sparsity pattern in~\eqref{eq:sparsityPatternLambda} will not only satisfy the KKT conditions in~\eqref{eq:KKTStationaryMatrixFormInN} but also make $\MN^\star \succeq 0$ and $\rank{\MN^\star}=4(N+1)-1$. Therefore, by Theorem~\ref{thm:generalStrongDuality}, the naive relaxation in Proposition~\ref{prop:naiveRelax} is always tight and Theorem~\ref{thm:strongDualityNoiseless} is true.
\end{proposition}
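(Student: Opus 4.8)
The plan is to check the two spectral requirements of Theorem~\ref{thm:generalStrongDuality} for the proposed certificate, namely $\MN^\star \succeq 0$ and $\rank{\MN^\star} = 4(N+1)-1$; the stationarity/KKT part is already granted, because the chosen $\barLambda^\star_{ii}$ carries exactly the sparsity pattern~\eqref{eq:sparsityPatternLambda} that Lemma~\ref{lemma:sparsityPatternDualVar} shows to be necessary and sufficient for~\eqref{eq:KKTStationaryMatrixFormInN}. Since $\MN^\star = \MD\tran\MM^\star\MD$ is similar to $\MM^\star$ (Lemma~\ref{lemma:similarityTransform}), establishing these two facts for $\MN^\star$ transfers them verbatim to $\MM^\star$, as required.

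First I would plug $\ME_{ii} = \vSkew{\va_i}^2 - \tfrac{1}{4}\barcsq\eye_3$ into the blocks of $\MN^\star$ and simplify with the identity $\vSkew{\va_i}^2 = \va_i\va_i\tran - \Vert\va_i\Vert^2\eye_3$. The relevant $3\times 3$ top-left blocks collapse to $\MU_i \doteq (\Vert\va_i\Vert^2 + \tfrac{\barcsq}{4})\eye_3 - \va_i\va_i\tran$ for the diagonal block $\barMQ_{ii}+\barLambda^\star_{ii}$ and $\MP_i \doteq (\Vert\va_i\Vert^2 - \tfrac{\barcsq}{4})\eye_3 - \va_i\va_i\tran$ for $\barMQ_{0i}$, with bottom-right scalars $\tfrac{\barcsq}{4}$ and $-\tfrac{\barcsq}{4}$. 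Two consequences are immediate: each diagonal block equals $\mathrm{blkdiag}(\MU_i,\tfrac{\barcsq}{4})$ and is positive definite (the eigenvalues of $\MU_i$ are $\tfrac{\barcsq}{4}$ along $\va_i$ and $\Vert\va_i\Vert^2+\tfrac{\barcsq}{4}$ on the complement), and the $3\times 3$ top-left block of $-\sum_i\barLambda^\star_{ii}$ simplifies to exactly $\sum_i\MU_i$.

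Next I would eliminate the positive-definite block-diagonal lower-right submatrix by a Schur complement, so that $\MN^\star\succeq 0$ is equivalent to $\MS\succeq 0$ and $\rank{\MN^\star}=4N+\rank{\MS}$, where $\MS$ is the $4\times 4$ Schur complement. Because every block respects the $3+1$ splitting, $\MS$ is itself block-diagonal: its scalar entry is $N\tfrac{\barcsq}{4}-\sum_i(-\tfrac{\barcsq}{4})(\tfrac{\barcsq}{4})\inv(-\tfrac{\barcsq}{4})=0$, and its $3\times 3$ part is $\MS_3 = \sum_i(\MU_i - \MP_i\MU_i\inv\MP_i)$. Hence $\rank{\MS}=\rank{\MS_3}$, and it remains to prove $\MS_3\succ 0$, which delivers $\rank{\MN^\star}=4N+3=4(N+1)-1$.

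The crux is that $\MU_i$ and $\MP_i$ commute, both being polynomials in $\va_i\va_i\tran$, so each summand diagonalizes in the same eigenbasis of $\va_i$ and its orthogonal complement. Along $\va_i$ the term $\MU_i-\MP_i\MU_i\inv\MP_i$ vanishes, while on the complement it equals $\tfrac{\Vert\va_i\Vert^2\,\barcsq}{\Vert\va_i\Vert^2+\barcsq/4}>0$; thus each summand is positive semidefinite with kernel precisely $\mathrm{span}(\va_i)$, and $\ker\MS_3=\bigcap_i\mathrm{span}(\va_i)$. The hypothesis that at least two of the $\va_i$ are not parallel collapses this intersection to $\{\zero\}$, giving $\MS_3\succ 0$. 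Therefore $\MN^\star\succeq 0$ with $\rank{\MN^\star}=4(N+1)-1$, and Theorem~\ref{thm:generalStrongDuality} then certifies tightness of the naive relaxation, proving Theorem~\ref{thm:strongDualityNoiseless}. I expect the main obstacles to be the block-wise Schur-complement bookkeeping and, above all, recognizing the simultaneous diagonalization of $\MU_i$ and $\MP_i$ that makes the non-parallel condition control $\ker\MS_3$ so cleanly.
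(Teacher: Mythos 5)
Your proposal is correct, and it reaches the conclusion by a genuinely different route than the paper. The paper proves $\MN^\star \succeq 0$ by brute-force expansion of the quadratic form: it writes $\vu\tran \MN^\star \vu = \sum_i m_i$ with $\vu_i = [\barvu_i\tran\ u_i]\tran$, completes squares in each summand to obtain the explicit sum-of-squares certificate $m_i \geq \Vert \va_i \times (\barvu_0+\barvu_i)\Vert^2 + \tfrac{\barcsq}{4}\Vert \barvu_0 - \barvu_i \Vert^2$ (plus the scalar term $\tfrac{\barcsq}{4}(u_0-u_i)^2$), and then reads the rank off the vanishing conditions~\eqref{eq:zeroConditions}. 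You instead eliminate the clone blocks by a Schur complement with respect to the positive-definite block-diagonal lower-right submatrix, reducing the whole question to the $4\times 4$ matrix $\MS = \mathrm{blkdiag}(\MS_3, 0)$ with $\MS_3 = \sum_i (\MU_i - \MP_i \MU_i\inv \MP_i)$, and then certify $\MS_3 \succ 0$ by simultaneous diagonalization of $\MU_i$ and $\MP_i$ (both polynomials in $\va_i\va_i\tran$). The two arguments are dual views of the same structure --- your Schur complement performs, per measurement, exactly the optimal elimination of $\vu_i$ that the paper's completion of squares encodes implicitly --- but your version is more mechanical and arguably cleaner: it yields exact eigenvalues ($0$ along $\va_i$, $\Vert\va_i\Vert^2\barcsq/(\Vert\va_i\Vert^2+\barcsq/4)$ on the complement), an exact rank ledger $\rank{\MN^\star} = 4N + \rank{\MS_3}$, and it isolates precisely where the non-parallel hypothesis enters, via $\ker \MS_3 = \bigcap_i \mathrm{span}(\va_i) = \{\zero\}$; the paper's version, in exchange, exhibits an explicit geometric SOS decomposition without requiring the matrix-inversion bookkeeping. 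Your handling of the KKT part (delegating it entirely to the necessity-and-sufficiency statement of Lemma~\ref{lemma:sparsityPatternDualVar}) and the transfer from $\MN^\star$ to $\MM^\star$ via Lemma~\ref{lemma:similarityTransform} match the paper, and your blocks $\MU_i$, $\MP_i$ and the scalar entries all check out against eqs.~\eqref{eq:barMQ0i}--\eqref{eq:generalFormBarLambda} (you also correctly read the $\va_k$ in~\eqref{eq:choiceOfE} as a typo for $\va_i$).
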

\begin{proof}
By Lemma~\ref{lemma:sparsityPatternDualVar}, we only need to prove the choice of $\ME_{ii}$ in~\eqref{eq:choiceOfE} makes $\MN^\star \succeq 0$ and $\rank{\MN^\star}=4(N+1)-1$. Towards this goal, we will show that for any vector $\vu \in \Real{4(N+1)}$, $\vu\tran \MN^\star \vu \geq 0$ and the nullspace of $\MN^\star$ is $\ker(\MN^\star)=\{\vu: \vu = a\vr, a\in \Real{} \text{ and } a\neq 0\}$ ($\MN^\star$ has a single zero eigenvalue with associated eigenvector $\vr$). We partition $\vu = [\vu_0\tran\ \vu_1\tran\ \dots\ \vu_N\tran]\tran$, where $\vu_i \in \Real{4},\forall i=0,\dots,N$. Then using the form of $\MN^\star$ in~\eqref{eq:KKTStationaryMatrixFormInN}, we can write $\vu\tran \MN^\star \vu$ as:
\bea
& \vu\tran \MN^\star \vu = - \sumAllPointsi \vu_0\tran \barLambda_{ii} \vu_0 + \nonumber \\
& 2\sumAllPointsi \vu_0\tran \barMQ_{0i} \vu_i + \sumAllPointsi \vu_i\tran (\barMQ_{ii}+\barLambda_{ii})\vu_i \\
& \hspace{-13mm}= \sum\limits_{i=1}^N \underbrace{ \vu_0\tran (-\barLambda_{ii})\vu_0 + \vu_0\tran (2 \barMQ_{0i})\vu_i + \vu_i\tran (\barMQ_{ii}+\barLambda_{ii})\vu_i}_{m_i}.
\eea
Further denoting $\vu_i = [\barvu_i\tran\ u_i]\tran$ with $\barvu_i \in \Real{3}$, $m_i$ can be written as:
\bea
& m_i = \bmat{c}
\barvu_0 \\ u_0
\emat\tran \bmat{cc}
-\ME_{ii} & \zero \\
\zero & \frac{\barcsq}{4}
\emat 
\bmat{c}
\barvu_0 \\ u_0
\emat + \nonumber \\
& \bmat{c}
\barvu_0 \\ u_0
\emat\tran 
\bmat{c|c}
\substack{ \left( \Vert \va_i \Vert^2 - \frac{\barcsq}{2} \right)\eye_3 \\ -\vSkew{\va_i}^2 - \va_i \va_i\tran } & \MZero \\
\hline
\MZero & - \frac{\barcsq}{2} 
\emat
\bmat{c}
\barvu_i \\ u_i
\emat + \nonumber \\
& \bmat{c}
\barvu_i \\ u_i
\emat\tran
\bmat{c|c}
\substack{ \left( \Vert \va_i \Vert^2 + \frac{\barcsq}{2} \right)\eye_3 \\ - \vSkew{\va_i}^2 - \va_i \va_i\tran + \ME_{ii}} & \MZero \\
\hline
\MZero & \frac{\barcsq}{4}
\emat
\bmat{c}
\barvu_i \\ u_i
\emat \\
& = \frac{\barcsq}{4}\underbrace{ \left( u_0^2 - 2u_0 u_i + u_i^2\right)}_{=(u_0-u_i)^2 \geq 0}+ \barvu_0\tran (-\ME_{ii}) \barvu_0 +  \nonumber \\
&\barvu_0\tran \left( \Vert \va_i \Vert^2 \eye_3 - \frac{\barcsq}{2}\eye_3 - \vSkew{\va_i}^2 - \va_i\va_i\tran \right) \barvu_i + \nonumber \\
& \hspace{-5mm}\barvu_i\tran \left(\Vert \va_i \Vert^2\eye_3 + \frac{\barcsq}{2} \eye_3 - \vSkew{\va_i}^2 - \va_i \va_i\tran + \ME_{ii} \right) \barvu_i, \label{eq:isolateScalar}
\eea
where equality holds only when $u_i=u_0$ in the underbraced inequality in~\eqref{eq:isolateScalar}. Now we insert the choice of $\ME_{ii}$ in eq.~\eqref{eq:choiceOfE} to $m_i$ to get the following inequality:
\bea
& m_i \geq \barvu_0\tran (-\vSkew{\va_i}^2)\barvu_0  + \frac{1}{4} \barcsq \barvu_0\tran \barvu_0 + \nonumber \\
& \hspace{-8mm}\Vert \va_i \Vert^2 \barvu_0\tran \barvu_i - \frac{\barcsq}{2}\barvu_0\tran \barvu_i + \barvu_0\tran (-\vSkew{\va_i}^2) \barvu_i - \barvu_0\tran \va_i \va_i\tran \barvu_i + \nonumber \\
&  \Vert \va_i \Vert^2 \barvu_i\tran \barvu_i + \frac{1}{4} \barcsq \barvu_i\tran \barvu_i - \barvu_i\tran \va_i \va_i\tran \barvu_i. \label{eq:afterInsertingEii}
\eea
Using the following facts:
\bea
& \Vert \va_i \Vert^2 \barvu_0\tran \barvu_i = \barvu_0\tran (-\vSkew{\va_i}^2 + \va_i\va_i\tran) \barvu_i; \\
& \Vert \va_i \Vert^2 \barvu_i\tran \barvu_i = \barvu_i\tran (-\vSkew{\va_i}^2 + \va_i\va_i\tran) \barvu_i, 
\eea
eq.~\eqref{eq:afterInsertingEii} can be simplified as:
\bea
& m_i \geq \barvu_0\tran (-\vSkew{\va_i}^2)\barvu_0  + \frac{1}{4} \barcsq \barvu_0\tran \barvu_0 + \nonumber \\
& 2\barvu_0\tran (-\vSkew{\va_i}^2)\barvu_i - \frac{\barcsq}{2} \barvu_0\tran \barvu_i + \nonumber \\
& \barvu_i\tran (-\vSkew{\va_i}^2) \barvu_i + \frac{1}{4} \barcsq \barvu_i\tran \barvu_i \\
& = \left( \vSkew{\va_i} \barvu_0 + \vSkew{\va_i} \barvu_i \right)\tran \left( \vSkew{\va_i} \barvu_0 + \vSkew{\va_i} \barvu_i \right) + \nonumber \\
&  \frac{\barcsq}{4} \left( \barvu_0\tran \barvu_0 - 2\barvu_0\tran \barvu_i  + \barvu_i\tran \barvu_i \right) \\
& = \Vert \va_i \times (\barvu_0 + \barvu_i) \Vert^2 +  \frac{ \barcsq}{4} \Vert  \barvu_0 -  \barvu_i \Vert^2  \nonumber \\
& \geq 0.
\eea
Since each $m_i$ is nonnegative, $\vu\tran \MN^\star \vu \geq 0$ holds true for any vector $\vu$ and therefore $\MN^\star \succeq 0$ is true. To see $\MN^\star$ only has one zero eigenvalue, we notice that $\vu\tran \MN^\star \vu = 0$ holds only when:
\bea
\begin{cases} \label{eq:zeroConditions}
u_i = u_0, \forall i=1,\dots,N \\
\barvu_0=\barvu_i, \forall i=1,\dots,N \\
\va_i \times (\barvu_0 + \barvu_i) = \zero_3,\forall i=1,\dots,N
\end{cases}
\eea
because we have more than two $\va_i$'s that are not parallel to each other, eq.~\eqref{eq:zeroConditions} leads to $\barvu_0=\barvu_i = \zero_3,\forall i=1,\dots,N$. Therefore the only set of nonzero vectors that satisfy the above conditions are $\{\vu \in \Real{4(N+1)}: \vu = a\vr,a\in\Real{} \text{ and }a\neq 0\}$. Therefore, $\MN^\star$ has only one zero eigenvalue and $\rank{\MN^\star}=4(N+1)-1$.
\end{proof} 
Proposition~\ref{prop:constructionDualVariable} indeed proves the original Theorem~\ref{thm:strongDualityNoiseless} by giving valid constructions of dual variables under which strong duality always holds in the noiseless and outlier-free case.
\end{proof}

\section{Proof of Proposition~\ref{prop:relaxationRedundant}}
\label{sec:proof:prop:relaxationRedundant}
\begin{proof}
Here we prove that eq.~\eqref{eq:relaxationRedundant} is a convex relaxation of eq.~\eqref{eq:qcqpZ}
and that the relaxation is always tighter, \ie the optimal objective  of~\eqref{eq:SDPrelax} is always closer to the optimal objective of~\eqref{eq:TLSBinaryClone}, when compared to the naive relaxation~\eqref{eq:naiveRelaxation}.

To prove the first claim, we first show that the additional constraints in the last two lines of~\eqref{eq:SDPrelax} are 
\emph{redundant} for~\eqref{eq:qcqpZ}, \ie, they are trivially satisfied by any feasible solution of~\eqref{eq:qcqpZ}.
Towards this goal  
%
we note that eq.~\eqref{eq:qcqpZ} is equivalent to~\eqref{eq:TLSBinaryClone}, where $\vxx$ is a column vector stacking $N+1$ quaternions: $\vxx = [\vq\tran\ \vq_1\tran\ \dots \vq_N\tran]\tran$, and where each $\vq_i = \theta_i \vq,\theta_i \in \{\pm 1\},\forall i=1,\dots,N$. Therefore, we have:
\bea
{[\MZ]_{qq_i} = \vq \vq_i\tran = \theta_i\vq \vq\tran = \vq_i \vq\tran = (\vq \vq_i\tran)\tran = [\MZ]\tran_{qq_i} } 
\nonumber \\
{ [\MZ]_{q_iq_j} = \vq_i \vq_j\tran = \theta_i\theta_j\vq \vq\tran = \vq_j \vq_i\tran = (\vq_i \vq_j\tran)\tran = [\MZ]\tran_{q_iq_j} }
\nonumber 
\eea
This proves that the constraints $[\MZ]_{qq_i} = [\MZ]\tran_{qq_i}$ and $[\MZ]_{q_iq_j} = [\MZ]\tran_{q_iq_j}$ are redundant for~\eqref{eq:qcqpZ}.
Therefore, problem~\eqref{eq:qcqpZ} is equivalent to:
\bea 
\label{eq:qcqpZredundant}
\min_{\MZ \succeq 0} & \trace{\MQ \MZ} \\
\subject 
& \trace{[\MZ]_{qq}} = 1 \nonumber \\
& [\MZ]_{q_i q_i} = [\MZ]_{qq}, \forall i=1,\dots,N \nonumber \\
& [\MZ]_{q q_i} = [\MZ]\tran_{q q_i}, \forall i=1,\dots,N \nonumber \\
& [\MZ]_{q_i q_j} = [\MZ]\tran_{q_i q_j}, \forall 1\leq i < j \leq N \nonumber \\
& \rank{\MZ} = 1 \nonumber
\eea 
where we added the redundant constraints as they do not alter the feasible set.
At this point, proving that~\eqref{eq:relaxationRedundant} is a convex relaxation of~\eqref{eq:qcqpZredundant} 
(and hence of~\eqref{eq:qcqpZ}) can be done with the same arguments of the proof of Proposition~\ref{prop:naiveRelax}:  
in~\eqref{eq:relaxationRedundant} we dropped the rank constraint (leading to a larger feasible set) and the remaining constraints 
are convex.

The proof of the second claim is straightforward.
 Since we added more constraints in~\eqref{eq:relaxationRedundant} compared to the naive relaxation~\eqref{eq:naiveRelaxation}, the optimal cost of~\eqref{eq:relaxationRedundant} always achieves a higher objective than~\eqref{eq:naiveRelaxation}, 
and since they are both relaxations, their objectives provide a lower bound 
  to the original non-convex problem~\eqref{eq:qcqpZ}.
\end{proof}

\section{Benchmark against \bnb}
\label{sec:benchmarkBnB}
\edit{We follow the same experimental setup as in Section~\ref{sec:exp_synthetic} of the main document, and benchmark \name against (i) \emph{Guaranteed Outlier Removal}~\cite{Bustos2015iccv-gore3D} (label: \GORE); (ii) \bnb with \emph{L-2 distance threshold}~\cite{bazin2012accv-globalRotSearch} (label: \BnBLtwo); and (iii) \bnb with \emph{angular distance threshold}~\cite{Bustos2015iccv-gore3D} (label: \BnBAng). Fig.~\ref{fig:benchmarkBnBGORE} boxplots the distribution of rotation errors for 30 Monte Carlo runs in different combinations of outlier rates and noise corruptions. In the case of low inlier noise ($\sigma=0.01$), \name is robust against $96\%$ outliers and achieves significantly better estimation accuracy compared to \GORE, \BnBLtwo and \BnBAng, all of which experience failures at $96\%$ outlier rates (Fig.~\ref{fig:benchmarkBnBGORE}(a,b)). In the case of high inlier noise ($\sigma=0.1$), \name is still robust against $80\%$ outlier rates and has lower estimation error compared to the other methods. }



\begin{figure}[b]
	\begin{center}
	\begin{minipage}{\linewidth}
	\begin{tabular}{c}%
			\begin{minipage}{\linewidth}%
			\centering%
			\includegraphics[width=0.9\columnwidth]{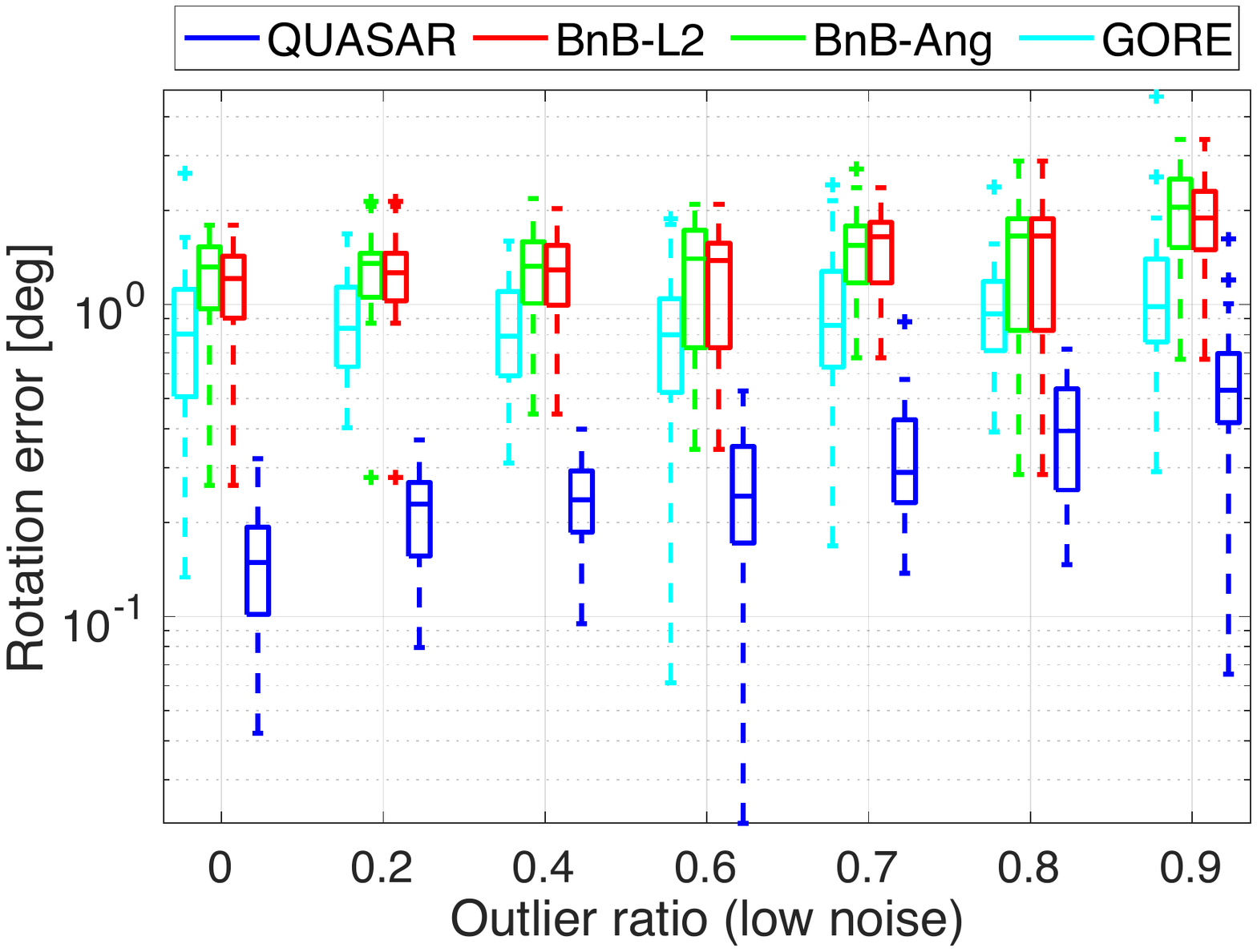} \\
			\end{minipage} \\
			\quad (a) $0-90\%$ outlier rates, low inlier noise \vspace{1mm} \\
			\begin{minipage}{\linewidth}%
			\centering%
			\includegraphics[width=0.9\columnwidth]{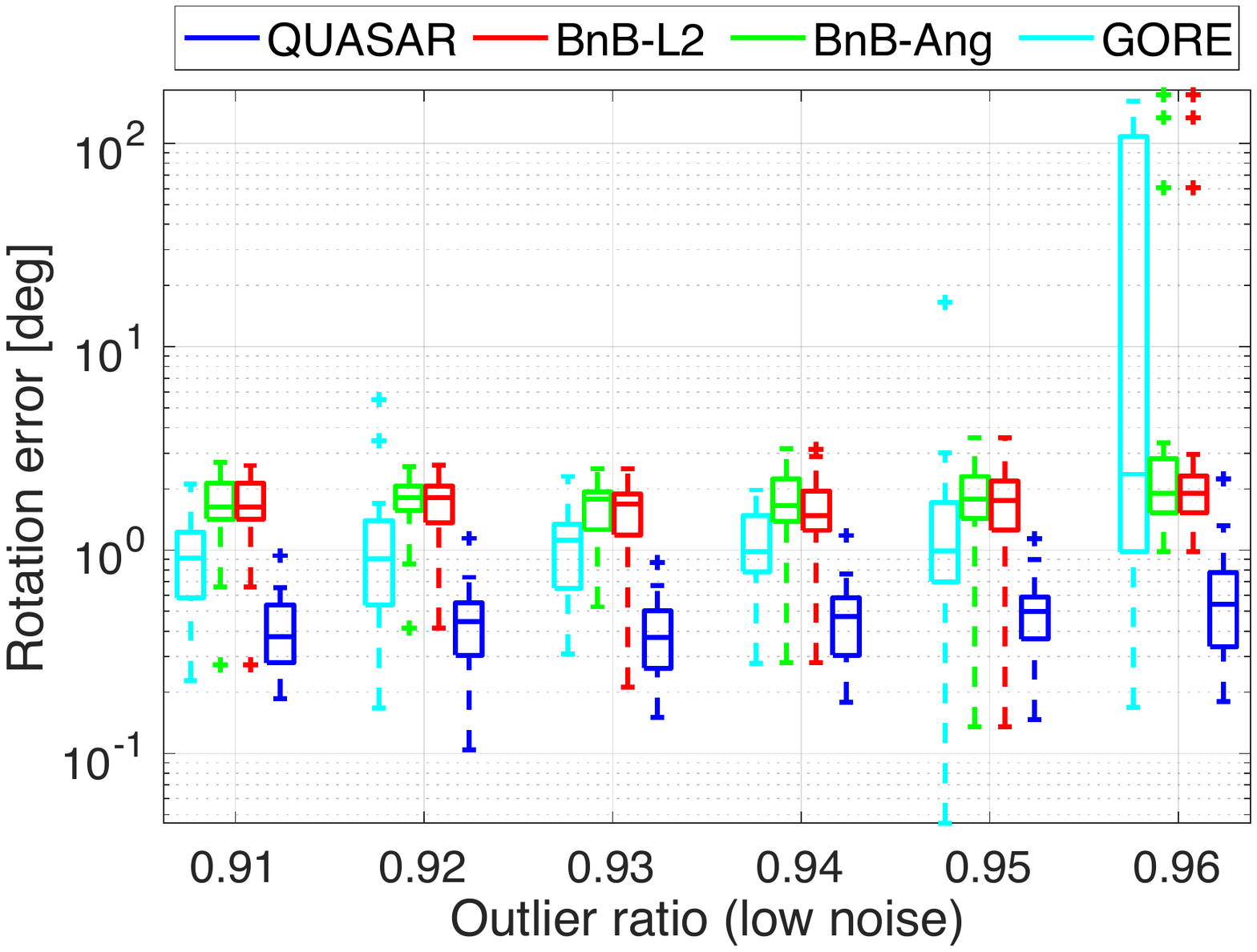} \\
			\end{minipage} \\
			\quad (b) $91-96\%$ outlier rates, low inlier noise \vspace{1mm} \\
			\begin{minipage}{\linewidth}%
			\centering
			\includegraphics[width=0.9\columnwidth]{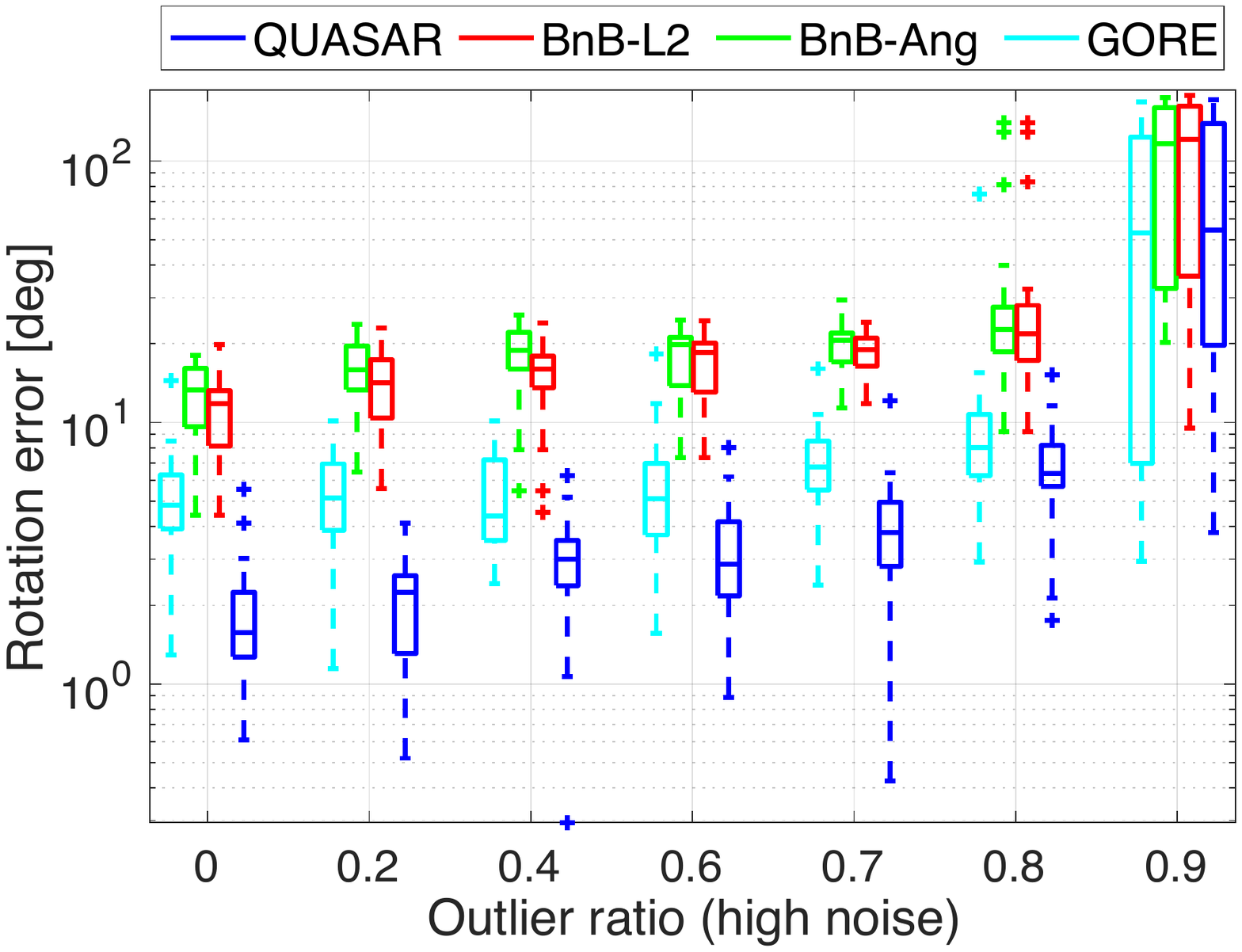} \\
			\end{minipage} \\
			\quad (c) $0-90\%$ outlier rates, high inlier noise  \\
		\end{tabular}
	\end{minipage}
	\vspace{1mm} 
	\caption{Rotation estimation error by \name, \GORE, \BnBLtwo and \BnBAng on (a) $0-90\%$ outlier rates with low inlier noise, (b) $91-96\%$ outlier rates with low inlier noise and (c) $0-90\%$ outlier rates with high inlier noise.}
	 \label{fig:benchmarkBnBGORE}
	\vspace{-6mm} 
	\end{center}
\end{figure}


\section{Image Stitching Results}
\label{sec:imageStitching}
Here we provide extra image stitching results. As mentioned in the main document, we use the \emph{Lunch Room} images from the PASSTA dataset~\cite{meneghetti2015scia-PASSATImageStitchingData}, which contains 72 images in total. We performed pairwise image stitching for 12 times, stitching image pairs $(6i+1,6i+7)$ for $i=0,\dots,11$ (when $i=11$, $6i+7=73$ is cycled to image 1). The reason for not doing image stitching between consecutive image pairs is to reduce the relative overlapping area
 so that \SURF~\cite{Bay06eccv} feature matching is more prone to output outliers, 
 creating a more challenging benchmark for \name.
 
 \name successfully performed all 12 image stitching tasks and Table~\ref{tab:imageStitchingStatistics} reports the statistics. 
As we mentioned in the main document, the stitching of image pair (7,13) was the most challenging, due to the high outlier ratio of $66\%$, and the \ransac-based stitching method~\cite{Torr00cviu,Hartley04book} as implemented by the Matlab ``\scenario{estimateGeometricTransform}'' function failed in that case. We show the failed example from \ransac in Fig.~\ref{fig:failedRansac}.


\begin{table}
\centering
\begin{tabular}{ccc}
& Mean & SD \\
\hline
\SURF Outlier Ratio & 14\% & 18.3\% \\
Relative Duality Gap & $1.40\mathrm{e}{-09}$ & $2.18\mathrm{e}{-09}$ \\
Rank & 1 & 0 \\
Stable Rank & $1+8.33\mathrm{e}{-17}$ & $2.96\mathrm{e}{-16}$
\end{tabular}
\caption{Image stitching statistics (mean and standard deviation (SD)) of \name on the \emph{Lunch Room} dataset~\cite{meneghetti2015scia-PASSATImageStitchingData}.}
\label{tab:imageStitchingStatistics}
\end{table}
\begin{figure}[t]
	\begin{center}
	\begin{tabular}{c}
	\begin{minipage}{\columnwidth}
			\centering%
			\includegraphics[width=\columnwidth]{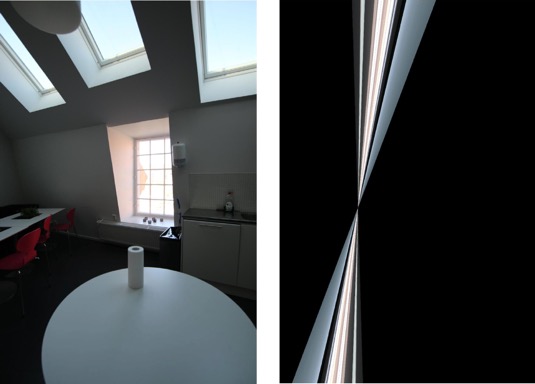}
	\end{minipage} \\
	\begin{minipage}{\columnwidth}
			\centering%
			\includegraphics[width=\columnwidth]{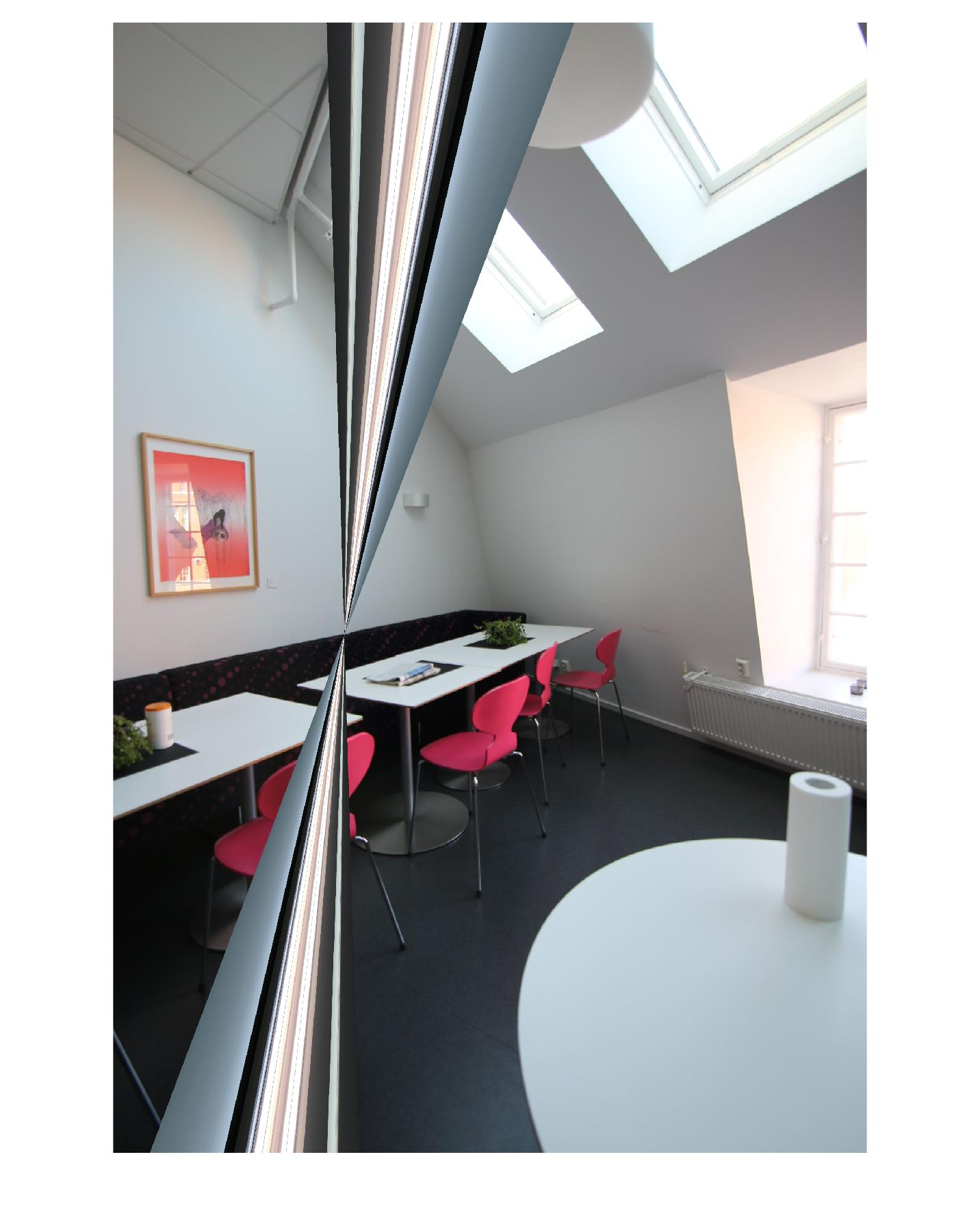}
	\end{minipage}
	 \end{tabular}
	\end{center}
	\caption{\ransac-based image stitching algorithm as implemented in Matlab failed when merging image pair (7,13) of the \emph{Lunch Room} dataset~\cite{meneghetti2015scia-PASSATImageStitchingData}. Top row, left: the original image 13; top row, right: image 13 after applying the (wrong) homography matrix estimated by Matlab's \ransac-based \scenario{estimateGeometricTransform} algorithm; bottom row: failed stitching of image 13 and image 7 due to the incorrect estimation of the  homography matrix.
	 \label{fig:failedRansac} }
	\vspace{0mm}
\end{figure}

{\small
\bibliographystyle{ieee_fullname}
\bibliography{iccv_refs_fullname}
}

\end{document}